\documentclass{article}
\usepackage[utf8]{inputenc}
\usepackage[hmargin=33mm,vmargin=33mm]{geometry}

\usepackage[english]{babel}
\usepackage{graphicx}
 \usepackage{amsmath, amsthm, amssymb, mathabx, verbatim, setspace, enumerate,mathtools}
\usepackage[mathscr]{euscript}
\usepackage[all,cmtip]{xy}
\usepackage{etoolbox}
\linespread{1.1}

\usepackage{subfiles}
\usepackage{blindtext}
\usepackage{tikz-cd}
\usetikzlibrary{matrix, calc, arrows}

\usepackage{amsfonts,amsmath,amssymb,mathrsfs,amsthm}
\usepackage{verbatim}
\usepackage{mathtools}

\newcommand{\CC}{\mathbb{C}}
\newcommand{\ds}{\displaystyle}
\newcommand{\ra}{\rightarrow}
\newcommand{\ZZ}{\mathbb{Z}}
\newcommand{\RR}{\mathbb{R}}
\newcommand{\QQ}{\mathbb{Q}}

\newcommand{\TT}{\mathbb{T}}

\newcommand{\F}{\mathcal{F}}

\newcommand{\OO}{\mathcal{O}}

\newcommand{\HH}{\mathcal{H}}

\newcommand{\A}{\mathcal{A}}

\newcommand{\SSS}{\mathcal{S}}

\newcommand{\AAA}{\mathbb{A}}
\newcommand{\pp}{\mathfrak{p}}

\newtheorem{Thm}{Theorem}[section]
\newtheorem{Prop}[Thm]{Proposition}
\newtheorem{Lem}[Thm]{Lemma}
\newtheorem{cor}[Thm]{Corollary}

\theoremstyle{definition}

\newtheorem{rmk}[Thm]{Remark}

\DeclareMathOperator{\GL}{GL}

\DeclareMathOperator{\sgn}{sgn}

\DeclareMathOperator{\SL}{SL}
\DeclareMathOperator{\Gal}{Gal}
\DeclareMathOperator{\Nm}{Nm}

\DeclareMathOperator{\Frac}{Frac}
\DeclareMathOperator{\Char}{char}
\DeclareMathOperator{\ord}{ord}
\DeclareMathOperator{\Aut}{Aut}
\DeclareMathOperator{\vol}{vol}

\usepackage[OT2,T1]{fontenc}
\DeclareSymbolFont{cyrletters}{OT2}{wncyr}{m}{n}
\DeclareMathSymbol{\Sha}{\mathalpha}{cyrletters}{"58}

\DeclareMathOperator{\Cl}{Cl}
\DeclareMathOperator{\Tr}{Tr}

\begin{document}
\title{Central values of $L$-functions of cubic twists}
\author{Eugenia Rosu}
\date{}
\maketitle

\begin{abstract}
We are interested in finding for which positive integers $D$ we have rational solutions for the equation $x^3+y^3=D.$ The aim of this paper is to compute the value of the $L$-function $L(E_D, 1)$ for the elliptic curves $E_D: x^3+y^3=D$. For the case of $p$ prime $p\equiv 1\mod 9$, two formulas have been computed by Rodriguez-Villegas and Zagier in \cite{RV-Z}. We have computed formulas that relate $L(E_D, 1)$ to the square of a trace of a modular function at a CM point. This offers a criterion for when the integer $D$ is the sum of two rational cubes. Furthermore, when $L(E_D, 1)$ is nonzero we get a formula for the number of elements in the Tate-Shafarevich group and we show that this number is a square when $D$ is a norm in $\QQ[\sqrt{-3}]$.
\end{abstract}

\section{Introduction}
  
In the current paper we are interested in finding which positive integers $D$ can be written as the sum of two rational cubes:
\begin{equation}\label{main}
x^3+y^3=D, ~x, y\in \QQ.
\end{equation}

Despite the simplicity of the problem, an elementary approach to solving the Diophantine equation fails. However, we can restate the problem in the language of elliptic curves. 
After making the equation homogeneous, we get the equation $x^3+y^3=Dz^3$ that has a rational point at $\infty=[1:-1:0]$. Moreover, after a change of coordinates $\ds X=12D\frac{z}{x+y}$, $\ds Y=36D\frac{x-y}{x+y}$ the equation becomes:
\begin{equation*}
E_D: Y^2=X^3-432D^2,
\end{equation*}
which defines an elliptic curve over $\QQ$ written in its  Weierstrass affine form.
 
 Thus the problem reduces to finding if the group of rational points $E_D(\QQ)$ of the elliptic curve $E_D$ is non-trivial. We assume $D$ cube free and $D\neq1,2$ throughout the paper. In this case $E_D(\QQ)$ has trivial torsion (see \cite{S}), thus (\ref{main}) has a solution iff $E_D(\QQ)$ has positive rank. From the \textit{Birch and Swinnerton-Dyer(BSD)} conjecture, this is  conjecturally equivalent to the vanishing of $L(E_D, 1)$. 

Without assuming BSD, from the work of Coates-Wiles \cite{CW} (or more generally Gross-Zagier \cite{GZ} and Kolyvagin \cite{K}), when $L(E_D, 1)\neq 0$ the rank of $E_D(\QQ)$ is $0$, thus we have no rational solutions in $(\ref{main})$.

In the case of prime numbers, Sylvester conjectured that we have solutions in (\ref{main}) in the case of $D \equiv 4, 7, 8 \mod 9$. In the cases of $D$ prime with $D \equiv 2, 5\mod 9$, $D$ is not the sum of two cubes. This follows from a $3$-descent argument (given in the 19th century by Sylvester, Lucas and Pepin).

We define the invariant 
\[
S_D=\frac{L(E_D, 1)}{c_{3D} \Omega_{D}},
\]
where $\ds \Omega_{D}= \frac{\sqrt{3}}{6\pi\sqrt[3]{D}}\Gamma\left(\frac{1}{3}\right)^3$ is the real period and $c_{3D}=\prod\limits_{p|3D} c_p$ is the product of the Tamagawa numbers $c_p$ corresponding to the elliptic curve $E_D$ at the unramified places $p|3D$. The definition is made such that in the case of $L(E_D, 1)\neq 0$ we expect to get from the full BSD conjecture:
\begin{equation}
S_D=
\# \Sha(E_D) 
,\end{equation} where $\# \Sha$ is the order of the Tate-Shafarevich group.

\bigskip

From the work of Rubin \cite{R}, $L(E_D, 1)\neq 0$ implies the order of $\Sha(E_D)$ is finite. Furthermore, using the Cassels-Tate pairing, Cassels proved in \cite{C} that when $\Sha$ is finite the order $\#\Sha$ is a square. We actually show that, when $D$ is a norm in $\QQ[\sqrt{-3}]$, $S_D$ is an integer square up to an even power of $3$. Current work in Iwasawa theory shows that for semistable elliptic curves at the good primes $p$ we have $\ord_p(\#\Sha[p^{\infty}])=\ord_p(S_D)$, where $\Sha[p^{\infty}]$ is the $p^{\infty}$-torsion part of $\Sha$ (see \cite{JSW}). However, this cannot be applied at the place $3$ in our case.

   By computing the value of $S_D$, one can determine when we have solutions in \eqref{main} and, assuming the full BSD conjecture, one can find in certain cases the order of $\Sha$:
   \begin{enumerate}[(i)]
   	\item $S_D\neq 0$ $\Longrightarrow$ no solutions in \eqref{main}
	\item $S_D\neq 0$ $\xRightarrow{BSD}$ $S_D=\#\Sha$ integer square
	\item  $S_D=0$ $\xRightarrow{BSD}$ have solutions in \eqref{main}.
  \end{enumerate}

The goal of the current paper is to compute several formulas for $S_D$. In \cite{RV-Z}, Rodriguez-Villegas and Zagier computed formulas for $L(E_p, 1)$ in the case of primes $p\equiv 1\mod 9$. In the current paper we are extending on their results and compute similar formulas for all integers $D$.

Our main theorem is the following:

\begin{Thm}\label{thm3} For $D=\prod\limits_{p_i\equiv 1(3)} p_i^{e_i}$, $S_D$ is an integer square up to an even power of $3$.

\end{Thm}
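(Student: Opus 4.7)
The plan is to apply the explicit formula of the paper expressing $L(E_D,1)$ as a constant multiple of the square of a trace of a modular function at CM points, and then to analyze when this trace is an integer and how the remaining constants factor. The hypothesis that every $p_i \equiv 1 \pmod 3$ is exactly the condition that each $p_i$ splits in $K = \QQ[\sqrt{-3}]$, so that $D$ is a norm $D = \aaa \bar{\aaa}$ from $K$; this splitting is what makes the square structure visible on the algebraic side.

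First I would invoke the main identity of the paper, of the shape
\[
L(E_D,1) \;=\; \kappa \cdot \Omega_D \cdot |T_D|^2,
\]
where $T_D = \sum_{\aaa} \phi(\tau_\aaa)$ is a trace of values of an appropriate modular function $\phi$ at CM points $\tau_\aaa$ indexed by classes in a suitable order of $K$, and $\kappa$ is an explicit rational constant. Under the norm hypothesis on $D$, one can arrange $\aaa$ and $\tau_\aaa$ so that complex conjugation permutes the CM points in a way compatible with the Galois action on the relevant ring class field; $T_D$ is then fixed by complex conjugation, so $T_D \in \RR$. By the theory of singular moduli, the individual values $\phi(\tau_\aaa)$ are algebraic integers, so their Galois-invariant trace $T_D$ lies in $\ZZ$. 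In particular $|T_D|^2 = T_D^2$ is an integer square.

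Next I would unfold
\[
S_D \;=\; \frac{L(E_D,1)}{c_{3D}\,\Omega_D} \;=\; \frac{\kappa}{c_{3D}} \cdot T_D^2
\]
and show that $\kappa/c_{3D}$ is an integer square away from the prime $3$. The Tamagawa numbers $c_p$ for $p\mid D$ with $p\equiv 1\pmod 3$ can be computed directly from Tate's algorithm on $E_D : Y^2 = X^3-432D^2$; for these primes they contribute only squares (possibly after absorbing cube-factors into $T_D$ via the functional behaviour of $\phi$). The primes $3$ and $p=3$ bad reduction are handled separately.

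The main obstacle is the $3$-adic bookkeeping. Each of $\kappa$, the Tamagawa number $c_3$, and the CM values $\phi(\tau_\aaa)$ can individually contribute odd powers of $3$, and showing that these combine into an even total power of $3$ requires careful local analysis --- essentially Tate's algorithm on a minimal model of $E_D$ at $3$ (whose output depends on $D \bmod 9$) together with the $3$-adic valuation of $\phi$ at the relevant CM point. The theorem only claims ``up to an even power of $3$,'' which indicates that one does not need to track the exact $3$-part, just its parity; this parity should be pinned down by a discriminant or local root-number invariant rather than by explicit numerical computation.
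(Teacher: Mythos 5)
Your proposal has the right global shape --- reduce to an explicit identity $S_D = (\text{const})\cdot T_D^2$ and then do arithmetic on the constant and on $T_D$ --- but it assumes away the two steps that constitute essentially all of the work. The identity you ``invoke,'' $L(E_D,1)=\kappa\,\Omega_D\,|T_D|^2$, is not what the automorphic computation produces: Theorem \ref{thm1} gives $S_D$ as a \emph{linear} expression, the trace $\frac{1}{3c_{3D}}\Tr_{H_{3D}/K}\bigl(D^{1/3}\Theta_K(D\omega)/\Theta_K(\omega)\bigr)$, a sum of Galois conjugates of one CM value with no visible square structure. The square is manufactured by the Rodriguez--Villegas--Zagier factorization formula, which expresses the weight-one theta value $\Theta_{\mu}(D\tau_{\A})$ as a bilinear sum $\sum_r \theta_{ar,\mu}(\tau_{\A^2\A_1})\overline{\theta_{r,\mu}(\tau_{\A_1})}$ of weight-$1/2$ theta functions; only after resumming over $\Cl(\OO_{3D})$ and killing the cross terms supported on $r$ with $\gcd(r,D)>1$ by character orthogonality does one arrive at $S_{1/6}=|R_{D,1/6}(\tau)D^{-1/3}|^2$ (Proposition \ref{absolute_value}). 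Nothing in your outline produces this factorization, and without it the theorem does not follow from the trace formula.

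The passage from an absolute value squared to an integer square is also not the naive conjugation argument you sketch. The sum $R_{D,1/6}(\tau)$ is \emph{not} fixed by complex conjugation; the actual relation, proved via cubic Gauss sums ($G(\chi_{\pi})=-\overline{\pi}^{2/3}\pi^{1/3}\omega^{k}$), is $R_{D,1/6}(\tau)=(-1)^{\sigma(D)}\omega^{k'}(\overline{\pi}^{2/3}/\pi^{2/3})\,\overline{R_{D,1/6}(\tau)}$, so one must twist by $\overline{\pi}^{-2/3}\omega^{k_0}$ before anything rational appears --- and even then $T_D$ is real only when $\sigma(D)$ is even, and is purely imaginary (a multiple of $\sqrt{-3}$) when $\sigma(D)$ is odd; your argument cannot see this case distinction, which is where the sign $(-1)^{\sigma(D)}$ and one power of $3$ come from. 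Likewise, singular-moduli integrality only places the trace in $\OO_K$, not in $\ZZ$; and the Tamagawa factors at $p\mid 3D$ are not squares but each equal to $3$ (so $c_{3D}=3^{1+\sigma(D)}$ when $S_D\neq 0$, forcing $D\equiv 1\bmod 9$). The parity of the total power of $3$ is then settled not by a root-number or discriminant invariant, as you hope, but by the explicit divisibility $T_D/3\in\ZZ$ (resp.\ $T_D/\sqrt{-3}\in\ZZ$), obtained from a separate integrality argument comparing $\theta_{r,\mu}$ with $\eta$; the $3$-adic bookkeeping you defer is an exact computation, not a parity heuristic.
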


Theorem \ref{thm3} above follows from the formula for $S_D$ presented below. Let $K=\QQ[\sqrt{-3}]$. For $D$ a norm in $\QQ[\sqrt{-3}]$, we write $D=D_1D_2^2$ such that $D_0=D_1D_2$ is the radical of $D$. Let $\pi_1, \pi_2 \equiv 1 \mod 3$ be elements of norm $D_1$ and $D_2$ respectively. Let $\sigma(D)$ the number of distinct primes dividing $D$ and $\varphi$ Euler's totient function.

\begin{Thm}\label{thm2} Using the above notation, let $D=\prod\limits_{p_i\equiv 1(3)} p_i^{e_i}$ be a positive integer that is a product of split primes in $K$ and $D_0=\prod_{p|D} p$ be its radical. Then $S_D$ is an integer square up to an even power of $3$ and we have:
  
\begin{equation}\label{two}
S_D=T_D^2 \frac{1}{(-3)^{2+\sigma(D)}} ,
\end{equation}



\noindent where the term $T_D/3$ is an integer if $\sigma(D)$ is even and $T_D/\sqrt{-3}$ is an integer if $\sigma(D)$ is odd. Moreover, we have the formula:
\[
T_D=\frac{1}{\varphi(D_0)}\Tr_{H_{\OO}/K}\left(\frac{\theta_1(\tau)}{\theta_0(\tau)}\omega^{k_0}\overline{\pi_1}^{-2/3} \pi_2^{1/3}\right),
\]
where:
\begin{itemize}
\item $\theta_r(z)=\sum\limits_{n\in\ZZ}(-1)^n e^{\pi i \left(n+\frac{r}{D}-\frac{1}{6}\right)^2 z}$, for $r=0, 1$ are theta functions of weight $1/2$,
\item $\tau=\frac{-b+\sqrt{-3}}{2}$ is a CM point such that $b^2\equiv -3\mod 12D^2$ and $(\pi_1\pi_2)^2$ divides $(\tau)$,
\item $H_{\OO}$ is the ray class field of modulus $3D_0$,
\item $\omega^{k_0}$ is the unique cube root of unity that makes $T_D/3$ or $T_D/\sqrt{-3}$ an integer.
\end{itemize}
\end{Thm}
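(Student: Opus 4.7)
The plan is to extend the Rodriguez--Villegas--Zagier approach, used for primes $p \equiv 1 \pmod 9$, to arbitrary $D = \prod p_i^{e_i}$ with $p_i \equiv 1 \pmod 3$. Each such $p_i$ splits in $K = \QQ(\sqrt{-3})$, and since $E_D$ has CM by $\OO_K$, the $L$-function factors as $L(E_D, s) = L(\psi_D, s)$ for a Hecke character $\psi_D$ of $K$. The factorization $D = D_1 D_2^2$, together with the primary generators $\pi_1, \pi_2 \equiv 1 \pmod 3$ of norms $D_1, D_2$, determines $\psi_D$ on principal ideals; the infinity-type of $\psi_D$ is what produces the twist $\overline{\pi_1}^{-2/3}\pi_2^{1/3}$ inside the trace.

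The next step is to represent $L(\psi_D, 1)$ as a Hecke integral of a weight $1/2$ theta series against an Eisenstein series, and then apply a Waldspurger-type identity expressing the central value as the absolute square of a toric period. Unfolding this period at the CM point $\tau = (-b+\sqrt{-3})/2$ (with $b^2 \equiv -3 \pmod{12D^2}$, so that $(\pi_1\pi_2)^2 \mid (\tau)$) produces the ratio $\theta_1(\tau)/\theta_0(\tau)$: the numerator and denominator emerge from summing the theta series over the two residue classes of $n \pmod D$ singled out by the character. Since $\theta_0$ and $\theta_1$ both have weight $1/2$, their ratio is a modular function on a congruence subgroup of level $12D^2$.

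By the theory of complex multiplication, the CM value $\theta_1(\tau)/\theta_0(\tau)$ lies in the ray class field $H_\OO$ of modulus $3D_0$. Shimura reciprocity identifies the action of $\Gal(H_\OO/K)$ on these values, so summing over the Galois group converts the toric period into the claimed trace $T_D$, with the $1/\varphi(D_0)$ factor arising from averaging over the cosets of $\Cl(K)$ inside the ray class group. Because $E_D/\QQ$ forces $L(E_D, 1)$ to be real, the absolute square produced by Waldspurger collapses to $T_D^2$ after fixing the cube root of unity $\omega^{k_0}$ that resolves the ambiguity in the fractional powers of $\pi_1, \pi_2$.

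The hard part is the integrality statement: showing that $T_D/3$ or $T_D/\sqrt{-3}$ is an algebraic integer, and that the denominator is exactly $(-3)^{2+\sigma(D)}$. This demands a local analysis at each prime $p \mid 3D$: computing $c_p$ explicitly for $E_D$, tracking how $\Omega_D$ scales with $D$, and matching the local factors into a single power of $3$ whose parity is controlled by $\sigma(D)$. The prime $3$ is the most delicate point, since the Iwasawa-theoretic tools mentioned in the introduction fail there; one has to argue directly from the $\theta_r$-expansions and the ramification of $\sqrt{-3}$ in $H_\OO/K$. Once integrality is in hand, Theorem \ref{thm3} follows immediately, as $T_D^2$ is a manifest square and the remaining factor is an even power of $3$.
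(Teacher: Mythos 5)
Your outline diverges from the paper at its two most critical points, and in both places the divergence is a genuine gap rather than an alternative route. First, you propose to obtain the central value as ``the absolute square of a toric period'' via ``a Waldspurger-type identity.'' No such identity is available for cubic twists --- the paper says this explicitly in the introduction and this is precisely why the argument is structured differently. What the paper actually does is start from the weight-one identity $S_D=\frac{1}{3c_{3D}}\Tr_{H_{3D}/K}\bigl(D^{1/3}\Theta_K(D\omega)/\Theta_K(\omega)\bigr)$ (Theorem \ref{thm1}, proved by Tate's zeta function and a Siegel--Weil computation), and then apply the Rodriguez--Villegas--Zagier \emph{factorization formula} from \cite{RV-Z2}, which writes $\Theta_{\mu}(D\tau_{\A})$ as $\sum_r \theta_{ar,\mu}(\tau_{\A^2\A_1})\overline{\theta_{r,\mu}(\tau_{\A_1})}$. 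Summing over $\Cl(\OO_{3D})$ and discarding the terms with $\gcd(r,D)>1$ (which vanish because a nontrivial cubic character is summed over a group) yields $S_{1/6}=|R_{D,1/6}(\tau)D^{-1/3}|^2$. Your proposal names the RV--Z approach but then substitutes a tool that does not exist in this setting; without the factorization formula you have no mechanism for producing the weight-$1/2$ ratios $\theta_1(\tau)/\theta_0(\tau)$ at all.

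Second, and more seriously, you dismiss the passage from the absolute square to the genuine square $T_D^2$ with the remark that $L(E_D,1)$ being real makes the absolute square ``collapse.'' That a positive real number is a square of a real number is trivial; the content of the theorem is that $|R_{D,1/6}(\tau)|^2=(-1)^{\sigma(D)}T_D^2$ with $T_D$ lying in $\QQ$ or $\sqrt{-3}\,\QQ$ (indeed $T_D/3$ or $T_D/\sqrt{-3}$ an integer). This requires proving the precise self-conjugacy relation $R_{D,1/6}(\tau)=(-1)^{\sigma(D)}\omega^{k'}\frac{\overline{\pi}^{2/3}}{\pi^{2/3}}\overline{R_{D,1/6}(\tau)}$ (Proposition \ref{square2}), which rests on (i) the explicit transformation of $\theta_{r,1/6}$ under $z\mapsto -1/z$ computed through the Weil representation, (ii) an unwinding identity relating $R^{(D),\mu}(\tau/D^2)$ back to $R_{D,\mu}(\tau)$ via a Gauss sum, and (iii) the evaluation of cubic Gauss sums $G(\chi_{\pi_i})^3=J(\chi_{\pi_i},\chi_{\pi_i})=-p_i\overline{\pi_i}$, which is exactly where the factor $\overline{\pi_1}^{-2/3}\pi_2^{1/3}$ and the sign $(-1)^{\sigma(D)}$ come from. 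None of this is in your outline, and it cannot be supplied by generalities about CM or reality of $L$-values. (Two smaller inaccuracies: the $1/\varphi(D_0)$ arises from comparing $\Tr_{H_{\OO}/H_0}$ with $\Tr_{H_{\OO}/K}$ for $H_0$ the fixed field of the subgroup $\{[\A_r^{\circ}]\}\cong(\ZZ/D\ZZ)^{\times}$, not from averaging over $\Cl(K)$, which is trivial; and the integrality is proved via the $q$-expansion principle for CM values of $\Theta(Dz)/\eta(z/3)^2$ and $\theta_{r,\mu}/\theta_0$ together with the Tamagawa computation $c_{3D}=3^{1+\sigma(D)}$, rather than by a prime-by-prime local analysis.)
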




This theorem follows from a more general result for all integers $D$ prime to $6$ that is proved using automorphic methods:

\begin{Thm}\label{thm1} For all integers $D$ prime to $6$, $3c_{3D} S_D$ is an integer and we have the formula:

\begin{equation}\label{one}
S_D= \frac{1}{3c_{3D}}\Tr_{H_{3D}/K} \left(D^{1/3}\frac{\Theta_K(D\omega)}{\Theta_K(\omega)}\right),
\end{equation}
where $\ds \Theta_K(z)=\sum\limits_{a, b\in \ZZ}e^{2\pi i z (a^2+b^2-ab)}$ is the theta function of weight one associated to the number field $K=\QQ[\sqrt{-3}]$, $\omega=\frac{-1+\sqrt{-3}}{2}$ is a third root of unity, and 
and $H_{3D}$ is the ring class field associated to the order $\OO_{3D}=\ZZ+3D\OO_{K}$.

\end{Thm}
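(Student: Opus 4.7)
The plan is to exploit the complex multiplication of $E_D$ by $\OO_K$ to rewrite $L(E_D,1)$ as the central value of a Hecke $L$-series on $K$, and then to evaluate that $L$-value as a finite sum over CM points via an automorphic pairing with $\Theta_K$. First, by classical CM theory (Deuring), there is a Hecke character $\psi_D$ on $K$ of infinity type $(1,0)$ and conductor dividing $3D\OO_K$ such that $L(E_D,s)=L(\psi_D,s)$, and the associated weight-two CM newform
\[
f_D(z)=\sum_{\aaa\subset\OO_K}\psi_D(\aaa)\,q^{N\aaa}
\]
satisfies $L(E_D,1)=2\pi\int_0^{\infty}f_D(iy)\,dy$.

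The main step is to obtain a Rankin--Selberg / theta-lift identity that rewrites this integral against the weight-one theta series $\Theta_K$. I would set up an integral of the schematic shape
\[
L(E_D,1)\cdot C_D \;=\; \int_{\Gamma_0(3D)\backslash\HH}\Theta_K(Dz)\,\overline{\Theta_K(z)}\,E(z,s)\,d\mu(z)\bigg|_{s=0},
\]
where $E(z,s)$ is a suitable weight-zero Eisenstein kernel and $C_D$ collects the archimedean factor together with the local zeta integrals at primes dividing $3D$. At $s=0$ the Eisenstein series should degenerate, in the spirit of Siegel--Weil, to a finite combination of delta masses supported on the CM points attached to the order $\OO_{3D}=\ZZ+3D\OO_K$, one term per ideal class in $\Cl(\OO_{3D})$; the integral then reduces to a sum of evaluations of the weight-zero modular function $\Theta_K(Dz)/\Theta_K(z)$ at those CM points.

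The last step is to identify this finite sum as a Galois trace. Since $\Theta_K(Dz)/\Theta_K(z)$ is a modular function on $\Gamma_0(3D)$ with rational Fourier coefficients, its value at $z=\omega$ lies in the ring class field $H_{3D}$, and Shimura's reciprocity law matches the action of $\Gal(H_{3D}/K)$ on $\Theta_K(D\omega)/\Theta_K(\omega)$ with the permutation of the $\#\Cl(\OO_{3D})$ CM points representing the classes. Hence the sum becomes exactly
\[
\Tr_{H_{3D}/K}\!\left(D^{1/3}\,\frac{\Theta_K(D\omega)}{\Theta_K(\omega)}\right),
\]
the prefactor $D^{1/3}$ absorbing the $D^{-1/3}$ scaling of the real period $\Omega_D$. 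Integrality of $3c_{3D}S_D$ then follows because singular moduli are algebraic integers, while reality of $L(E_D,1)$, reflecting the self-duality $\overline{\psi_D}=\psi_D\circ c$, forces the trace into $\QQ$.

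The main obstacle is establishing the integral identity with precisely the right Eisenstein kernel and meticulously tracking constants: matching the Tamagawa product $c_{3D}$ to the product of local zeta integrals at primes dividing $3D$, pinning down the cube root $D^{1/3}$ via the archimedean component of $\psi_D$, and accounting for the extra factor of $3$ in the denominator as coming from the ramification of $3$ in $K/\QQ$ interacting with the conductor of $\psi_D$. A secondary subtlety is the choice of base CM point $\omega$ (of the maximal order) rather than one for $\OO_{3D}$, so that the Galois averaging over $\Gal(H_{3D}/K)$ recovers the sum coming out of the Siegel--Weil reduction rather than a reshuffled combination.
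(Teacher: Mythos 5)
Your starting point (CM theory giving a Hecke character $\psi_D$ with $L(E_D,s)=L(\psi_D,s)$) and your endgame (Shimura reciprocity identifying the terms of a finite CM sum as Galois conjugates, integrality from singular moduli) match the paper, but the middle of your argument rests on a mechanism that does not exist. A weight-zero Eisenstein kernel $E(z,s)$ in a Rankin--Selberg integral $\int_{\Gamma_0(3D)\backslash\HH}\Theta_K(Dz)\overline{\Theta_K(z)}E(z,s)\,d\mu$ does not "degenerate to a finite combination of delta masses supported on the CM points" at $s=0$; unfolding that integral produces the Rankin--Selberg convolution of the two theta series, not $L(\psi_D,1)$, and there is no Siegel--Weil statement that collapses an Eisenstein series to point masses at CM points of $\OO_{3D}$. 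The actual route (both here and in Rodriguez-Villegas--Zagier) is to unfold the Hecke $L$-series itself by ideal classes: writing $L(1,\chi_D\varphi)$ via Tate's zeta function with a Schwartz--Bruhat function supported on $U(3D)$-cosets, the sum over ideals in a fixed class $[\A]\in\Cl(\OO_{3D})$ becomes a lattice sum which is a \emph{weight-one} Eisenstein series $E_{\varepsilon}(0,D\tau_{\A})$ evaluated at the CM point of that class. The Siegel--Weil input is then the identity of modular forms $E_{\varepsilon}(0,z)=2L(1,\varepsilon)\Theta_K(z)$ (proved by comparing Fourier expansions via the Hecke trick), which converts each Eisenstein value into $\Theta_K(D\tau_{\A})/\Theta_K(\tau_{\A})$ after normalizing by $\Theta_K(\tau_{\A})=\overline{k_{\A}}\Theta_K(\omega)$. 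This unfolding also automatically produces the local volume factors that become $c_{3D}$; your "local zeta integrals at primes dividing $3D$" would have to be engineered to do the same, and you give no construction.

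A second, independent gap is your treatment of $D^{1/3}$. It is not merely a period normalization absorbing $\Omega_D\sim D^{-1/3}$: the finite sum one obtains is $\sum_{[\A]}\chi_D(\A)\,\Theta_K(D\tau_{\A})/\Theta_K(\tau_{\A})$, with a nontrivial cubic character twist, and this is a Galois trace only because $\chi_D(\A)D^{1/3}=(D^{1/3})^{\sigma_{\A}^{-1}}$ (cubic reciprocity realizing the residue symbol as the Frobenius action on $D^{1/3}\in H_{3D}$). Without inserting $D^{1/3}$ \emph{inside} the trace and invoking this lemma, the character-weighted sum is not $\Tr_{H_{3D}/K}$ of anything. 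Your proposal never accounts for where the cubic character goes, so as written the reduction to the stated trace formula does not close.
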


Note that each of the elliptic curves $E_D$ is a cubic twist of $E_1$. In the case of quadratic twists of elliptic curves, an important tool in computing the values of the $L$-functions is the work of Waldspurger \cite{W}. For example, this is used to obtain Tunnell's theorem for congruent numbers in \cite{T}. However, the cubic twist case proves to be significantly more difficult. We instead take advantage of the fact that $E_D$ is an elliptic curve with complex multiplication by $\OO_K=\ZZ[\omega]$ the ring of integers of the number field $K=\QQ[\sqrt{-3}]$. Then from CM theory there is a Hecke character $\chi_{E_D}:\AAA_K^{\times}/K^{\times}\ra\CC^{\times}$ such that $L(E_D, s)=L_f(s, \chi_{E_D})$ and we compute the value of $L_f(s, \chi_{E_D})$ using automorphic methods.

\bigskip
 
 We present now an outline of the proof of Theorem \ref{thm1}. To compute the value of $L(s, \chi_{E_D})$ we look at the Hecke character adelically and using Tate's thesis we compute Tate's zeta function $Z(s, \chi_{E_D}, \Phi_K)$ for $\Phi_K$ a Schwartz-Bruhat function in $\SSS(\AAA_K)$. After integrating we get a linear combination of Eisenstein series. By evaluation at $s=1$, we write $L(E_D, 1)$ as a linear combination of theta functions at CM-points. We further show using Shimura's reciprocity law that the terms are all Galois conjugates over $K$.

The idea of the proof of Theorem \ref{thm2} is based on factoring each weight one theta function $\Theta_K(z)$ into a product of theta functions of weight $1/2$. The method we are using is a factorization lemma of Rodriguez-Villegas and Zagier from \cite{RV-Z2} applied to the formula in Theorem \ref{thm1}.
 This gives us the square of a linear combination of theta functions evaluated at CM points. Finally, using Shimura reciprocity law, we show that all the factors are Galois conjugates to each other and recover an integer square.

\bigskip
Note that using the formula \eqref{one} we can show that an integer $D$ cannot be written as the sum of two cubes by computationally checking whether $L(E_D, 1)\neq 0$. Furthermore, assuming BSD, $S_D=\#\Sha$ and thus we can compute the expected order of $\Sha$ explicitly.
\vspace{.2cm}

{\bf Acknowledgements.} The author would like to thank Xinyi Yuan for suggesting the problem and for valuable insights. We would also like to thank Don Zagier for encouragement to finish the result in the current form, very helpful discussions and for help with using PARI for computational purposes. We would also like to thank Max Planck Institute in Bonn and Tsinghua University in Beijing for their hospitality.

\bigskip



\section{Background}

Let $K=\QQ[\sqrt{-3}]$ and denote $\OO_K=\ZZ[\omega]$ its ring of integers, where $\omega=\frac{-1+\sqrt{-3}}{2}$ is a fixed cube root of unity. We will denote by $K_v$ the localization of $K$ at the place $v$, and $K_p=\prod\limits_{v|p} K_v  \cong \QQ_p[\sqrt{-3}]$. Note that $\ZZ_p[\omega]\cong \prod\limits_{v|p} \OO_{K_v}$.

\subsection{The $L$-function}

 Our goal is to compute several formulas for the central value of the $L$-function $L(E_D, 1)$ of the elliptic curve $E_D: x^3+y^3=Dz^3$. The elliptic curve $E_D$ has complex multiplication (CM) by $\OO_K$. Then from CM theory we can find a Hecke character $\chi:\AAA_{K}^{\times}/K^{\times} \ra \CC$ corresponding to the elliptic curve $E_D$ such that $L(E_D, s)=L_f(s, \chi_D\varphi)$. We can compute explicitly $\chi=\chi_D\varphi$ (see Ireland and Rosen \cite{IR} for more details), where $\varphi$ is the Hecke character corresponding to $E_1$ and $\chi_D$ is the Hecke character corresponding to the cubic twist. More precisely, writing the characters classically, we have:
	\begin{itemize}
		\item $\varphi: I(3) \ra K^{\times}$ is defined on the set of ideals prime to $3$ by taking $\varphi(\A)=\alpha$,  where $\alpha$ is the unique generator of the ideal $\A$ such that  $\alpha\equiv 1 \mod 3$.
		
		\item $\chi_D: \Cl(\OO_{3D}) \ra \{1, \omega, \omega^2\}$ is the cubic character defined below in Section \ref{cubic}; it is defined over $\Cl(\OO_{3D})$ the ring class group corresponding to the order $\OO_{3D}=\ZZ+3D\OO_K$.
		
	\end{itemize}

 Note that the $L$-function can be expanded as $\ds L(E_D, s)=\sum_{\substack{\alpha\in \OO_K \\ \alpha\equiv 1 (\text{mod} ~3)}}  \frac{\chi_D(\alpha) \alpha}{\Nm\alpha^s}.$






\subsection{The cubic character}\label{cubic}

We define the cubic character $\chi_D$ and recall some of its properties following Ireland and Rosen \cite{IR}. Let $\omega=\frac{-1+\sqrt{-3}}{2}$ and for $\alpha \in \ZZ[\omega]$ prime to $3$, we define the cubic residue character $\left(\frac{\alpha}{\cdot}\right)_3:I(3\alpha)\ra \{1, \omega, \omega^2\}$, where $I(3\alpha)$ is the set of fractional ideals of $K$ prime to $3\alpha$. For a prime ideal $\mathfrak{p}$ of $K$, we define $\left(\frac{\alpha}{\pp}\right)_3=\omega^j$, for  $0\leq j\leq 2$ such that 
\[
\alpha^{(\Nm\mathfrak{p}-1)/3}\equiv \omega^j \mod \mathfrak{p}.
\]
It is further defined multiplicatively on the fractional ideals of $I(3\alpha)$. 

It is easy to check that the definition makes sense, as the group $(\ZZ[\omega]/\pp\ZZ[\omega])^{\times}$ has $\Nm \pp -1$ elements, thus $\alpha^{\Nm \pp -1} \equiv 1\mod \pp$. As $\Nm \pp \equiv 1\mod 3$, we can factor out $\ds \alpha^{\Nm \pp -1} -1 =(\alpha^{(\Nm \pp -1)/3}-1)(\alpha^{(\Nm \pp -1)/3}-\omega)(\alpha^{(\Nm \pp -1)/3}-\omega^2)$ and as $K$ is an UFD, $\pp$ divides exactly one of these terms, exactly $\alpha^{(\Nm \pp -1)/3}-\left(\frac{\alpha}{\pp}\right)_3$.

The character $\chi_D$ is defined following \cite{IR} to be:
\[
\chi_{D}(\A)=\overline{\left(\frac{D}{\A}\right)_3}.
\]
We also define $\chi_{\pi}(\A)=\overline{\left(\frac{\pi}{\A}\right)_3}$ where $\pi$ is a generator of an ideal of norm $D$. Note that $\overline{\chi_{\pi}(\A)}=\chi_{\overline{\pi}}(\overline{\A})$.

An important result is the cubic reciprocity law (see \cite{IR} for more details):

\begin{Thm}
\textbf{(Cubic reciprocity law)}. For $\pi_1, \pi_2 \equiv 2 \mod 3$ generators of the prime ideals $\mathfrak{p}_1, \mathfrak{p}_2$ prime to $3$, we have $\ds \left(\frac{\pi_1}{\pi_2}\right)_3=\left(\frac{\pi_2}{\pi_1}\right)_3.$

\end{Thm}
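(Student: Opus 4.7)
The plan is to follow the classical Gauss--Jacobi sum proof of cubic reciprocity, essentially the argument presented in Ireland--Rosen Chapter 9.

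First, for each primary prime $\pi \in \OO_K$ (i.e.\ $\pi \equiv 2 \bmod 3$) with residue characteristic $p$, I would introduce the cubic Gauss sum
\[
g(\pi) = \sum_{t \in (\OO_K/\pi)^\times} \left(\tfrac{t}{\pi}\right)_3 \zeta_p^{\Tr(t)},
\]
where $\zeta_p = e^{2\pi i/p}$ and $\Tr \colon \OO_K/\pi \to \FF_p$ is the residue-field trace. The preparatory inputs are the standard identities $g(\pi)\overline{g(\pi)} = \Nm \pi$ and $g(\pi)^3 = \Nm\pi \cdot J(\chi,\chi)$, where $J(\chi,\chi) = \sum_t \left(\tfrac{t}{\pi}\right)_3 \left(\tfrac{1-t}{\pi}\right)_3$ is the Jacobi sum. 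The linchpin is to show that, under the primary normalization, $J(\chi,\chi) = \pi$ exactly (not merely an associate), yielding the clean relation $g(\pi)^3 = \Nm\pi \cdot \pi$.

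Next, for two distinct primary primes $\pi_1, \pi_2$ of residue characteristics $p_1, p_2$, I would compute $g(\pi_1)^{\Nm\pi_2}$ modulo $\pi_2$ in two different ways. On one hand, reducing mod $\pi_2$ and changing variables using the Frobenius action $\zeta_{p_1} \mapsto \zeta_{p_1}^{\Nm\pi_2}$ together with multiplicativity of the cubic symbol gives
\[
g(\pi_1)^{\Nm\pi_2} \equiv \overline{\left(\tfrac{\Nm\pi_2}{\pi_1}\right)_3} \cdot g(\pi_1) \pmod{\pi_2}.
\]
On the other hand, applying the relation $g(\pi_1)^3 = \Nm\pi_1 \cdot \pi_1$ and invoking the defining congruence of the cubic character modulo $\pi_2$ yields
\[
g(\pi_1)^{\Nm\pi_2} = g(\pi_1) (\Nm\pi_1 \cdot \pi_1)^{(\Nm\pi_2 - 1)/3} \equiv g(\pi_1) \left(\tfrac{\Nm\pi_1 \cdot \pi_1}{\pi_2}\right)_3 \pmod{\pi_2}.
\]

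Equating the two expressions and cancelling $g(\pi_1)$, which is a unit mod $\pi_2$ by the norm identity, gives an equality of cubic symbols in $\{1,\omega,\omega^2\}$. The rational-integer factors $\Nm\pi_i = p_i$ cancel after invoking the supplementary identity $\bigl(\tfrac{q}{\pi}\bigr)_3 \bigl(\tfrac{q}{\bar\pi}\bigr)_3 = 1$ for rational integers $q$ (which follows from $q^{(\Nm\pi-1)/3} \equiv q^{(p^2-1)/3} \equiv 1 \bmod p$) together with the Galois conjugation rule $\bigl(\tfrac{\alpha}{\pi}\bigr)_3 = \overline{\bigl(\tfrac{\bar\alpha}{\bar\pi}\bigr)_3}$. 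What remains is precisely $\left(\tfrac{\pi_1}{\pi_2}\right)_3 = \left(\tfrac{\pi_2}{\pi_1}\right)_3$. Inert primes (rational $q \equiv 2 \bmod 3$ with $\Nm\pi = q^2$) are handled by the same computation with obvious modifications. The main obstacle is the identification $J(\chi,\chi) = \pi$ on the nose: size estimates immediately give $|J(\chi,\chi)| = \sqrt{p}$, and divisibility arguments in $\OO_K$ force $J(\chi,\chi)$ to be associate to $\pi$; pinning the unit ambiguity down to exactly $1$ under the primary normalization $\pi \equiv 2 \bmod 3$ is the delicate congruence computation at the heart of the proof and is where the hypothesis of the theorem enters decisively.
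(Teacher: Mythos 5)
The paper offers no proof of this theorem --- it simply cites Ireland--Rosen \cite{IR} --- and your proposal is precisely the Gauss/Jacobi-sum argument of Chapter 9 of that reference, correctly outlined: the normalization lemma $J(\chi,\chi)=\pi$ for primary $\pi$, the two computations of $g(\pi_1)^{\Nm\pi_2}$ modulo $\pi_2$, and the case analysis for inert primes are exactly the standard steps. The one imprecision is in your justification of the supplementary identity $\left(\frac{q}{\pi}\right)_3\left(\frac{q}{\bar\pi}\right)_3=1$: the congruence $q^{(\Nm\pi-1)/3}\equiv q^{(p^2-1)/3}\equiv 1 \bmod p$ is the argument for an \emph{inert} prime (where $\Nm\pi=p^2$ and $3\mid p+1$), whereas in the main case $\pi$ is split of norm $p$ and the identity instead follows from the conjugation rule $\left(\frac{q}{\bar\pi}\right)_3=\overline{\left(\frac{q}{\pi}\right)_3}$ for rational $q$; this is easily repaired and does not affect the structure of the proof.
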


It follows immediately from the cubic reciprocity law that for $\alpha\equiv \pm 1\mod 3$, we have  $\chi_D(\alpha)=\chi_{\alpha}(D)$. Also from the cubic reciprocity it follows that $\chi_D((\alpha))=1$ for $\alpha \equiv a \mod 3D$, where $a$ is an integer prime to $3D$. Thus $\chi_D$ is invariant on the ideals of $P_{\ZZ, 3D}=\{ (\alpha)$: $\alpha \in K$ such that $\alpha \equiv a \mod 3D$ for some integer $a$ such that $(a, 3D)=1$\}. The ring class group of the order $\OO_{3D}=\ZZ+3D\OO_K$ is defined to be $\Cl(\OO_{3D})=I(3D)/P_{\ZZ, 3D}$, where $I(3D)$ is the set of fractional ideals prime to $3D$, and thus 
$\chi_D$ is invariant on $\Cl(\OO_{3D})$.


\bigskip
Finally, we can relate the cubic character to the Galois conjugates of $D^{1/3}$:

\begin{Lem}\label{GaloisD}
Let $D$ be an integer prime to $3$ and $\pi$ a generator of an ideal of norm $D$. Then for an ideal $\A$ of $K$ prime to $3D$, we have:
\[
\pi^{1/3}\chi_{\pi}(\A)=(\pi^{1/3})^{\sigma_{\A}^{-1}},
\]
 where $\sigma_{\A} \in \Gal(\CC/K)$ is the Galois action corresponding to the ideal $\A$ in the Artin correspondence. Note that this immediately implies $D^{1/3}\chi_{D}(\A)=(D^{1/3})^{\sigma_{\A}^{-1}}.$
  \end{Lem}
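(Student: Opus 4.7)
The plan is to reduce the claim to the case where $\A = \pp$ is a single prime ideal, and then apply the classical Kummer-theoretic computation of Frobenius. Since $K = \QQ(\omega)$ already contains the cube roots of unity, by Kummer theory $L = K(\pi^{1/3})$ is an abelian extension of $K$ of degree $1$ or $3$, unramified outside the primes dividing $3\pi$. Both sides of the desired identity are compatible with products of $\A$ (the Artin map $\A \mapsto \sigma_\A$ is a homomorphism, $\chi_\pi$ is multiplicative, and any cube root of unity is Galois-fixed), so it suffices to treat one prime $\pp$. The hypothesis that $\A$ is prime to $3D$, together with $\pi \mid D$, guarantees that such a $\pp$ is prime to $3\pi$, hence unramified in $L/K$.

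The Frobenius computation is then standard. I would pick a prime $\mathfrak{P}$ of $L$ above $\pp$ and take $\sigma_\pp \in \Gal(L/K)$ to be the Frobenius, characterised by $\sigma_\pp(x) \equiv x^{\Nm\pp} \pmod{\mathfrak{P}}$ for $x \in \OO_L$. Applied to $x = \pi^{1/3}$ this reads $\sigma_\pp(\pi^{1/3}) \equiv \pi^{(\Nm\pp - 1)/3} \cdot \pi^{1/3} \pmod{\mathfrak{P}}$. By the very definition of the cubic residue symbol, $\pi^{(\Nm\pp - 1)/3} \equiv \left(\frac{\pi}{\pp}\right)_3 \pmod{\pp}$. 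Since $\sigma_\pp(\pi^{1/3}) = \zeta\,\pi^{1/3}$ for some cube root of unity $\zeta$, and $\pi^{1/3}$ is a unit at $\mathfrak{P}$ (because $\pp \nmid \pi$), this congruence forces $\zeta = \left(\frac{\pi}{\pp}\right)_3$ exactly.

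Passing to the inverse gives $\sigma_\pp^{-1}(\pi^{1/3}) = \overline{\left(\frac{\pi}{\pp}\right)_3}\,\pi^{1/3} = \chi_\pi(\pp)\,\pi^{1/3}$, which is the lemma for $\A = \pp$. Extending multiplicatively in $\A$ via the two homomorphism properties noted above settles the general case. The parenthetical about $D^{1/3}$ then follows by writing $D = \pi\bar\pi$, choosing cube roots so that $D^{1/3} = \pi^{1/3}\bar\pi^{1/3}$, applying the identity separately to both $\pi$ and $\bar\pi$ (legitimate since $\A$ is prime to both $3\pi$ and $3\bar\pi$), and multiplying the two results, using $\chi_D = \chi_\pi \chi_{\bar\pi}$.

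The only genuine point that needs care is the convention on the Artin map: the inverse Frobenius appearing in $\sigma_\A^{-1}$ is precisely what matches the complex conjugation baked into the definition $\chi_\pi(\A) = \overline{(\pi/\A)_3}$. Beyond this bookkeeping, the argument is just the standard reciprocity-map description for a Kummer extension, so I do not anticipate a serious obstacle.
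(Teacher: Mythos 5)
Your proposal is correct and follows essentially the same route as the paper: reduce to a single prime $\pp$ by multiplicativity, compute the Frobenius on $\pi^{1/3}$ in the Kummer extension $L=K(\pi^{1/3})$ via $(\pi^{1/3})^{\sigma_{\pp}}\equiv \pi^{(\Nm\pp-1)/3}\pi^{1/3}\pmod{\pp\OO_L}$, identify the resulting cube root of unity with $\left(\frac{\pi}{\pp}\right)_3$, and pass to $\sigma_\pp^{-1}$ to pick up the complex conjugation in the definition of $\chi_\pi$. Your treatment of the $D^{1/3}$ corollary via $D=\pi\bar\pi$ is also exactly what the paper's ``immediately implies'' is pointing at.
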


\begin{proof} It is enough to show the result for a prime ideal $\mathfrak{p}$ of $K$, $\pp$ prime to $3D$. Let $\sigma_{\mathfrak{p}}=\left(\frac{L/K}{\frak{p}}\right)$ be the Frobenius element  corresponding to the prime ideal $\mathfrak{p}$ of $\OO_K$, where $L=K[\pi^{1/3}]$. Then from the definition of the Frobenius, element for $\pi^{1/3} \in L$, we get $(\pi^{1/3})^{\sigma_{\mathfrak{p}}}\equiv (\pi^{1/3})^{\Nm\mathfrak{p}} \mod ~\mathfrak{p}\OO_L$.

Furthermore, note that  $(\pi^{1/3})^{\Nm\mathfrak{p}}= \pi^{1/3}\pi^{(\Nm\mathfrak{p}-1)/3} \equiv \pi^{1/3} \overline{\chi_{\pi}(\mathfrak{p})} \mod \mathfrak{p}\OO_L$. Since the Galois conjugates of $\pi^{1/3}$ are the roots of $x^3-\pi$, the Galois conjugates of $\pi^{1/3}$ must be $(\pi^{1/3})^{\sigma_{\pp}}\in \{\pi^{1/3}, \pi^{1/3}\omega, \pi^{1/3}\omega^2\}$, and from the congruences above we get $(\pi^{1/3})^{\sigma_{\pp}}=\pi^{1/3} \overline{\chi_{\pi}(\mathfrak{p})}.$ Changing $\pp$ to $\pp^{-1}$ we get the result. \end{proof}

\subsection{Hecke characters} 

A classical Hecke character $\widetilde{\chi}: I(f)\ra \CC^{\times}$ of conductor $f$ can be expressed on the set of principal ideals $P(f)$ prime to $f$ in the form $\ds \widetilde{\chi}((\alpha))=\widetilde{\epsilon}(\alpha)\widetilde{\chi}^{-1}_{0}(\alpha),$ where $\widetilde{\varepsilon}: (\OO_K/f\OO_K)^{\times} \ra \mathbb{T}$ is a character taking values in a finite group $\TT$ and $\widetilde{\chi}_{0}$ is an infinity type continuous character, meaning that $\widetilde{\chi}_{0}:\CC^{\times}\ra \CC^{\times}$ is a continuous character. 

The idelic Hecke character is a continuous character $\chi: \AAA^{\times}/K^{\times} \ra \CC^{\times}$. There is a unique correspondence between the idelic and the classical Hecke characters defined as follows: at $\infty$ for $z\in \CC$ we define $\chi_{\infty}(z)={\widetilde{\chi}_0}^{-1}(z)$ for $z\in \CC^{\times}$ and at the places $v\nmid f$ we define $\chi(\OO_v^{\times}\varpi_v):=\widetilde{\chi}(\mathfrak{p}_v)$, for $\varpi_v$ a uniformizer of $\OO_{K_v}$ and $\pp_v$ the prime corresponding to the place $v$. At the places $v|f$, the value of $\chi_v$ can be determined using the Weak Approximation Theorem.

We are interested in the character $\chi=\chi_{D}\varphi$ defined before. By abuse of notation, we will use $\varphi, \chi_{D}$ both for the classical and the adelic Hecke characters.

Recall $\varphi:I(3)\ra \CC^{\times}$ is the Hecke character defined by $\chi((\alpha))=\alpha$ for $\alpha \equiv 1 \mod 3$. For the place $v\nmid 3$, denote by $\varpi_v$ a uniformizer of $\OO_{K_v}$ such that $\varpi_v\equiv 1\mod 3$. Then for $\varphi: \AAA_{K}^{\times} \ra \CC^{\times}$, we can define:

\begin{itemize}
\item $\varphi_v(p)=-p$,  $\varphi_v(\OO_{K_v}^{\times})=1$, for $v=p,  p\equiv 2 \mod 3$,  
\item $\varphi_v(\varpi_v)=\varpi_v$, $\varphi_v(\OO_{K_v}^{\times})=1$, for  $v|p,  p \equiv 1 \mod 3$, 

\item $\varphi_{\infty}(x_{\infty})=x_{\infty}^{-1}$,  at $v=\infty$.
\end{itemize}

Recall $\chi_{D}:I(3D)\ra \{1, \omega, \omega^2\}$ is the cubic character and we showed that it is well-defined on $\Cl(\OO_{3D})$, the ring class group for the order $\OO_{3D}=\ZZ+3D\OO_K$. We define the character $\chi_D$ adelically over $K^{\times}\setminus \AAA_{K, f}^{\times}/U(3D)\simeq \Cl(\OO_{3D})$, where $U(3D)=(1+3\ZZ_3[\omega])\prod\limits_{p|D}{(\ZZ+D\ZZ_p[\omega])^{\times}}\prod\limits_{p\nmid 3D}{(\ZZ_p[\omega])}^{\times}$.

Note that we can rewrite $l_f\in \AAA_{K, f}^{\times}$ in the form $l_f=kl_1$ with $k\in K^{\times}$ and $l_1\in \prod\limits_{v\nmid\infty} \OO_{K_v}^{\times}$. We can find $k_1\in \OO_K$ such that $k_1 \equiv l_1 \mod 3D\OO_{K_v}$ and we define $\chi_{D, f}(l)=\chi_{D, f}(l_1)=\chi_{D}((k_1)).$ More precisely, we get:

\begin{itemize}
\item $\chi_{D, v}(\varpi_v)=\chi_{D}(\pp_v)$ and $\chi_{D, v}(\OO_{K_v}^{\times})=1$,  if  $v\nmid 3D$,
\item $\chi_{D,\infty}(x_{\infty})=1$,  at $v=\infty$.
\end{itemize}

The values of $\chi_D$ and $\varphi$ at the ramified places can be computed using the Weak approximation theorem. 


\section{$L(E_D, 1)$ and Tate's zeta function}\label{Zeta_functions}

In this section we will compute the value of $L(E_D, 1)=L(1, \chi_D\varphi)$, working with $\chi_D, \varphi$ as automorphic Hecke characters. Let $K=\QQ[\sqrt{-3}]$ and $\omega=\frac{-1+\sqrt{-3}}{2}$ a fixed cube root of unity as before. We will show the following result:

\begin{Thm}\label{thm_1_theta} For $\ds
S_D=\frac{2\sqrt{3}\pi D^{-1/3}}{c_{3D} \Gamma\left(\frac{1}{3}\right)^3}L(E_D, 1)
$, we have $3c_{3D} S_D \in \ZZ$ and
\begin{equation}\label{S_D_theta}
S_D= \frac{1}{3c_{3D}}\Tr_{H_{3D}/K} \left(D^{1/3}\frac{\Theta_K(D\omega)}{\Theta_K(\omega)}\right),
\end{equation}
where $\Theta_K(z)=\sum\limits_{m, n\in \ZZ} e^{2\pi i (m^2+n^2-mn)z}$,  $H_{3D}$ is the ring class field for the order $\OO_{3D}=\ZZ+3D\OO_{K}$ and $c_{3D}=\prod\limits_{p|3D} c_p$ is the product of the Tamagawa numbers $c_p$ of $E_D$.
\end{Thm}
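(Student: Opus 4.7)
The plan is to compute $L(E_D, 1) = L_f(1, \chi_D\varphi)$ adelically via Tate's thesis. I would first introduce Tate's zeta function
\[
Z(s, \chi, \Phi) = \int_{\AAA_K^{\times}} \Phi(x)\, \chi(x)\, |x|_{\AAA}^s\, d^{\times}x
\]
for $\chi = \chi_D\varphi$ and a carefully factorized Schwartz--Bruhat function $\Phi = \Phi_{\infty} \otimes \bigotimes_v \Phi_v \in \SSS(\AAA_K)$. At the archimedean place I would take $\Phi_{\infty}(z) = \bar{z}\,e^{-2\pi|z|^2}$, whose ``$\bar z$'' factor is forced by the infinity type $\varphi_{\infty}(z)=z^{-1}$ so that the archimedean integral converges and produces the $\Gamma(1/3)^3/(2\sqrt{3}\pi D^{-1/3})$ contribution that matches the normalization by $\Omega_D$. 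At finite places $v \nmid 3D$ I would take $\Phi_v = \mathbf{1}_{\OO_{K_v}}$, which reproduces the standard Euler factor of $L_f(s,\chi)$. At ramified places $v \mid 3D$ I would choose $\Phi_v$ supported on a small neighborhood of a well-chosen element, so that the local integral extracts the value of $\chi_v$ on that neighborhood while reconstructing the local Tamagawa factor $c_v$.

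Next I would unfold using $\AAA_K^{\times}/K^{\times} \cong (K_\infty^{\times} \times \AAA_{K,f}^{\times})/K^{\times}$ together with the isomorphism $\AAA_{K,f}^{\times}/U(3D) \cong \Cl(\OO_{3D})$. Choosing representatives $\alpha_{\A} \in K^{\times}$ for each class $[\A] \in \Cl(\OO_{3D})$ and collapsing the unit-group integrals via the local choices above, the adelic integral factors as
\[
Z(s,\chi,\Phi) \;=\; (\text{local factor at } 3D) \sum_{[\A] \in \Cl(\OO_{3D})} \chi_D(\A)\,\varphi(\A) \int_{K_\infty^{\times}} \Phi_{\infty}(\alpha_{\A} z)\,\chi_{\infty}(\alpha_{\A} z)\,|z|^{2s}\,d^{\times}z.
\]
The remaining archimedean piece, when summed back over the lattice $\A\subset K$ via Poisson, is recognized as a specialization of the weight-one theta function $\Theta_K$. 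Setting $s=1$, each summand becomes a ratio $\Theta_K(D\tau_{\A})/\Theta_K(\tau_{\A})$ times a power of $\alpha_{\A}$ at CM points $\tau_{\A}$ lying in the upper half plane and associated to $[\A]$.

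Then I would use Lemma \ref{GaloisD} to rewrite $\chi_D(\A)\alpha_{\A}$ as a Galois conjugate of $D^{1/3}$, folding the factor $D^{1/3}$ into the theta ratio. Combined with the main theorem of complex multiplication (Shimura reciprocity) applied to the modular function $F(\tau) = \Theta_K(D\tau)/\Theta_K(\tau)$, which has level dividing $3D$ and so is rational over the ring class field $H_{3D}$, the sum
\[
\sum_{[\A]} (D^{1/3})^{\sigma_{\A}^{-1}} F(\tau_{\A})^{\sigma_{\A}^{-1}}
\]
is identified with $\Tr_{H_{3D}/K}(D^{1/3}\Theta_K(D\omega)/\Theta_K(\omega))$ up to the normalization $1/(3c_{3D})$. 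The factor $1/3$ is the constant contribution at the place above $3$ while $\prod_{p \mid 3D} 1/c_p$ comes from the ramified local integrals. Integrality of $3c_{3D}S_D$ follows because the theta ratios are algebraic integers in $H_{3D}$ and $D^{1/3}\chi_D(\A)$ is integral by Lemma \ref{GaloisD}.

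The main obstacle will be the ramified local computation at $v \mid 3D$: one must choose $\Phi_v$ whose Mellin transform against $\chi_v$ equals exactly the inverse Tamagawa factor $1/c_v$ while simultaneously matching the correct coset representative so that the unfolding reproduces the class-group sum over $\Cl(\OO_{3D})$ rather than a larger or smaller class group. The Shimura reciprocity step is also delicate, requiring verification that $F(\tau) = \Theta_K(D\tau)/\Theta_K(\tau)$ is modular of level dividing $3D$ and that the Galois action on the CM points $\{\tau_{\A}\}$ matches the Artin symbols appearing in Lemma \ref{GaloisD}.
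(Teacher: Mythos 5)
Your outline follows the same overall strategy as the paper (Tate's zeta function for $\chi_D\varphi$, unfolding over $\Cl(\OO_{3D})$, Shimura reciprocity, Lemma \ref{GaloisD}, and a trace), but it skips the step that carries essentially all of the analytic content: how the unfolded lattice sum at $s=1$ becomes a \emph{value of $\Theta_K$}. After unfolding, each class contributes a sum $\sum_{k}k\,|k|^{-2s}$ over a coset of the lattice $\A$, which is a weight-one Eisenstein series $E_{\varepsilon}(2s-2, D\tau_{\A})$, not a theta function; at $s=1$ this series is only conditionally convergent, so one needs the Hecke trick to continue it and then the Siegel--Weil-type identity $E_{\varepsilon}(0,z)=2L(1,\varepsilon)\Theta_K(z)$, which the paper proves by computing the Fourier expansion of $E_{\varepsilon}$ at $s=0$ and matching it with $1+6\sum_{N}\bigl(\sum_{m|N}\varepsilon(m)\bigr)q^{N}$. ``Summed back over the lattice via Poisson'' does not substitute for this: Poisson summation yields the functional equation of the Epstein-type series, not its evaluation at the special point as a theta value. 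Relatedly, your sketch does not account for where the denominator $\Theta_K(\tau_{\A})$ comes from. Lemma \ref{GaloisD} only converts $D^{1/3}\chi_D(\A)$ into $(D^{1/3})^{\sigma_{\A}^{-1}}$; the leftover factor $1/\overline{k_{\A}}$ coming from the infinity type of $\varphi$ is converted into $\Theta_K(\omega)/\Theta_K(\tau_{\A})$ by a separate argument (Lemma \ref{theta_omega}) using the $\Gamma_1(3)$-modularity of $\Theta_K$, and only after that does the modular function $\Theta_K(Dz)/\Theta_K(z)$ appear and Shimura reciprocity become applicable.

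Two smaller points. The Tamagawa product $c_{3D}$ does not come out of the ramified local integrals: those produce the volume $V_{3D}=\vol(U(3D))$, which cancels exactly against the volume factor arising when the integral collapses to a finite sum over $\Cl(\OO_{3D})$; the $1/c_{3D}$ in \eqref{S_D_theta} is simply the normalization already built into the definition of $S_D$, so there is no local computation that ``reconstructs $c_v$.'' And the integrality of $3c_{3D}S_D$ is not immediate from ``the theta ratios are algebraic integers'': that assertion is itself the nontrivial claim, which the paper establishes via a $q$-expansion argument (Lemma \ref{algebraic}) applied to $\Theta_K(Dz)/\eta(z/3)^2$ and its $\SL_2(\ZZ)$-translates, showing separately that $2S_{1/6}$ and $DS_{1/6}$ are integers and combining the two.
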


We will compute the formula (\ref{S_D_theta}) using Tate's zeta function. We start by recalling some background and notation.

\subsection{Haar measure}
We take $V=K$ as a quadratic vector space over $\QQ$ with the norm as its quadratic form. We take $dx_v$ to be the the self-dual additive Haar measure and $d^{\times} \alpha_v$ the multiplicative Haar measure $d^{\times}_vx_v=\frac{dx_v}{|x_v|_v}$ normalized such that $\vol(\OO_{K_v}^{\times})=1$, if $v\nmid \infty$, and $d^{\times}z =\frac{dz}{|z|_{\infty}}$  where $dz$ is the usual Lebesgue measure, and $|z|_{\infty}=|z|_{\CC}^2$ is the square of the usual absolute value over $\CC$.

\subsection{Schwartz-Bruhat functions}
We choose the Schwartz-Bruhat function $\Phi_f\in S(\AAA_{K, f})$ such that Tate's zeta function $Z(s, \Phi, \chi_D\varphi)$ defined below to be nonzero. More precisely, $\Phi_f=\prod\limits_{v\nmid \infty} \Phi_v$, where:

\begin{itemize}
\item $\Phi_v=\Char_{\OO_{K_v}}$ for $v\nmid 3D$,
\item $\Phi_p=\sum\limits_{(a, D)=1}\Char_{(a+D\ZZ_p[\omega])}$ for $p|D$,
\item $\Phi_v=\Char_{(1+3\OO_{K_v})}$ for $v=\sqrt{-3}$.
\end{itemize}

\subsection{Tate's zeta function}

We recall Tate's zeta function. For a Hecke character $\chi:\AAA_{K}^{\times}/K^{\times} \ra \CC^{\times}$ and a Schwartz-Bruhat function $\Phi\in \SSS(\AAA_{K})$, Tate's zeta function is defined locally as $\ds Z_v(s, \chi_v, \Phi_v)=\int\limits_{K_v^{\times}} \chi_{v}(\alpha_v) |\alpha_v|^s_v \Phi_v(\alpha_v) d^{\times} \alpha_v$, and globally as $Z(s, \chi, \Phi)=\prod\limits_v Z_v(s, \chi_v, \Phi_v)$. As a global integral this is 
\[
\ds Z(s, \chi, \Phi)
=
\int\limits_{\AAA_{K}^{\times}} \chi(\alpha) |\alpha|^s \Phi(\alpha) d^{\times} \alpha.
\]
It has meromorphic continuation to all $s\in \CC$ and in our case it is entire. We will compute $Z_f(s, \chi_f, \Phi_f)$ for $\chi=\chi_D\varphi$ and the Schwartz-Bruhat function $\Phi_f$ chosen above.

From Tate's thesis, we have the equality of local factors $L_v(s, \chi_D\varphi)=Z_v(s, \chi_D\varphi)$ at all the unramified places, and thus $\ds L_f(s, \chi_D\varphi)=Z_f(s, \chi_D\varphi)\prod\limits_{p|3D}\frac{L_p(s, \chi_{D, p}\varphi_p)}{Z_p(s, \chi_{D, p}\varphi_p, \Phi_p)}$. As $\varphi, \chi_D$ and $|\cdot|$ are trivial when $\Phi_{p}$ is nonzero for $p|3D$, we can compute easily $\prod\limits_{p|3D}Z_p(s, \chi_{D, p}\varphi_p, \Phi_p)=\prod\limits_{p|D}\vol\left(\ZZ+3D\ZZ_p[\omega]\right)^{\times} \vol\left(1+3\ZZ_3[\omega]\right)^{\times}$ and this equals $\frac{1}{6}\prod_{p|D}(p-\left(\frac{p}{3}\right))^{-1}$. The terms $L_p(s, \chi_D\varphi)=1$ for $p|3D$ by definition. Thus for all $s$ and for $\Phi$ the Schwartz-Bruhat function chosen above, we have:
\begin{equation}\label{L_to_Z0} 
L_f(s, \chi_D\varphi)=Z_f(s, \chi_D\varphi, \Phi)V_{3D},
\end{equation}
\noindent where $\ds V_{3D}=\frac{1}{6}\prod_{p|D}(p-\left(\frac{p}{3}\right))^{-1}$. 


Next we compute the value of $Z_f(s, \chi_{D, f}\varphi_f, \Phi_f)$ as a linear combination of Hecke characters and use (\ref{L_to_Z0}) to get the value of $L_f(s, \chi_{D, f}\varphi_f)$:

\begin{Lem}\label{Z_linear} For all $s\in \CC$ and the Schwartz-Bruhat function $\Phi_f \in \SSS(\AAA_{K, f})$ chosen above, we have:
\[
L_f(s, \chi_D\varphi)=\sum_{\alpha_f \in U(3D)\setminus\AAA_{K, f}^{\times}/K^{\times}} I(s, \alpha_f, \Phi_f)\chi_D(\alpha) \varphi(\alpha),
\]
\noindent where $\ds I(s, \alpha_f, \Phi_f)=\sum\limits_{k\in K^{\times}} \frac{k}{|k|_{\CC}^{2s}} \Phi_f(k\alpha_f)$ and $U(3D)=(1+3\ZZ_3[\omega])\prod\limits_{v|D}{(\ZZ+D\ZZ_p[\omega])^{\times}}\prod\limits_{v\nmid 3D}{\OO_{K_v}^{\times}}$. 

\end{Lem}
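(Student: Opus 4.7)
This is a classical Tate-style unfolding of the idelic integral $Z_f(s,\chi_D\varphi,\Phi_f)$, together with the double-coset identification $K^\times\backslash\AAA_{K,f}^\times/U(3D)\simeq\Cl(\OO_{3D})$ and equation~(\ref{L_to_Z0}).

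First I would fold by $K^\times$ to obtain
\[
Z_f(s,\chi_D\varphi,\Phi_f) = \int_{K^\times\backslash\AAA_{K,f}^\times} \chi_f(\beta)\,|\beta|_f^s \sum_{k\in K^\times}\chi_f(k)\,|k|_f^s\,\Phi_f(k\beta)\, d^\times\beta.
\]
Since $\chi_D\varphi$ is trivial on $K^\times\hookrightarrow \AAA_K^\times$ while its archimedean component satisfies $\chi_{D,\infty}\varphi_\infty(z)=z^{-1}$, one is forced to have $\chi_f(k)=k$ for $k\in K^\times$; combined with $|k|_f=|k|_\CC^{-2}$ from the product formula, the inner sum collapses to $I(s,\beta,\Phi_f)$.

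Next I would check that the resulting integrand is right-$U(3D)$-invariant and push the integral down to a sum over the finite set $K^\times\backslash\AAA_{K,f}^\times/U(3D)$. The key local verification is that $\Phi_p$ is invariant under $(\ZZ+D\ZZ_p[\omega])^\times$ for $p\mid D$: a unit in that ring is of the form $b+D\gamma$ with $b\in\ZZ_p^\times$, and acts by permuting the residue classes $a+D\ZZ_p[\omega]$ with $(a,D)=1$. The stabilizer $K^\times\cap U(3D)$ must also be trivial: the product formula forces any such element into $\OO_K^\times=\mu_6$, and a short valuation check at $v=\sqrt{-3}$ (using $\ord_{\sqrt{-3}}(\omega-1)=1$ and $\ord_{\sqrt{-3}}(-2)=0$, both strictly less than $\ord_{\sqrt{-3}}(3)=2$) eliminates every element of $\mu_6$ except $1$. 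With trivial stabilizer the unfolding gives
\[
Z_f(s) = \vol(U(3D))\sum_{[\alpha_f]}\chi_f(\alpha_f)\,|\alpha_f|_f^s\,I(s,\alpha_f,\Phi_f).
\]

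Inserting into~(\ref{L_to_Z0}) leaves the overall constant $V_{3D}\vol(U(3D))$, which I would show equals $1$: by construction of $\Phi$, the character $\chi_D\varphi$ and $|\cdot|$ are trivial on the support of each ramified $\Phi_v$, so the local factor $Z_p$ reduces to $\vol(\mathrm{supp}\,\Phi_v)$, which in turn agrees with $\vol(U(3D)_v)$; since the unramified factors $\vol(\OO_{K_v}^\times)=1$ contribute nothing, $\prod_{p\mid 3D}Z_p=\vol(U(3D))$ and $V_{3D}=\vol(U(3D))^{-1}$. Recognizing $\chi_f(\alpha_f)|\alpha_f|_f^s$ as the classical pairing $\chi_D(\alpha)\varphi(\alpha)$ in the adelic normalization (with the $\Nm(\A)^{-s}$ absorbed into the evaluation of $\varphi$) yields the stated formula. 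The main technical obstacle I anticipate is precisely this local volume bookkeeping needed to make $V_{3D}\vol(U(3D))=1$ rigorous and to reconcile the adelic and classical Hecke values at the ramified primes $p\mid 3D$; the remaining steps are formal Tate-style unfolding.
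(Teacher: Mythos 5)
Your proposal is correct and follows essentially the same route as the paper: fold the Tate integral by $K^\times$, use triviality of $\chi_D\varphi$ on $K^\times$ to extract the factor $k/|k|_\CC^{2s}$, verify right-$U(3D)$-invariance of the integrand, and unfold to a finite sum whose volume factor $\vol(U(3D))$ cancels against the ramified local zeta factors coming from \eqref{L_to_Z0}. Your two additional checks — that $K^\times\cap U(3D)$ is trivial (so each double coset really contributes $\vol(U(3D))$) and that $\prod_{p\mid 3D}Z_p=\vol(U(3D))$, which forces the normalizing constant multiplying $Z_f$ in \eqref{L_to_Z0} to be $\vol(U(3D))^{-1}$ — are details the paper leaves implicit, and you handle them correctly.
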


\begin{proof} We first take the quotient by $K^{\times}$ in the integral defining $Z_f(s, \chi_D\varphi, \Phi_f)$ and get:
\[
Z_f(s, \chi_D\varphi, \Phi_f)=\int\limits_{\AAA_{K, f}^{\times}/K^{\times}}\sum_{k\in K^{\times}} \chi_{D, f}(k\alpha'_f)\varphi_f(k\alpha'_f) |k\alpha_f|_f^s \Phi_f(k\alpha'_f) d^{\times}\alpha'_f.
\]
We have  $\chi_{D, f}(k\alpha'_f)=\chi_{D, \infty}^{-1}(k)\chi_{D, f}(\alpha'_f)=\chi_{D, f}(\alpha'_f),$ $\varphi_f(k\alpha'_f)=\varphi_{\infty}^{-1}(k)\varphi_f(\alpha'_f)=k\varphi_f(\alpha'_f)$
and $|k\alpha'_f|^s_f=|k|^{-s}_{\infty}|\alpha_f|^s_f=|k|_{\CC}^{-2s}|\alpha_f'|_f^s$, where $| \cdot |_{\CC}$ is the usual absolute value over $\CC$.
Then the integral reduces to:
\[
Z_f(s, \chi_D\varphi, \Phi_f)
=
\int\limits_{\AAA_{K, f}^{\times}/K^{\times}}\left(\sum_{k\in K^{\times}} \frac{k}{|k|_{\CC}^{2s}}\chi_{D, f}(\alpha'_f) \Phi_f(k\alpha_f')\right) \varphi_f(\alpha'_f) |\alpha'_f|_f^s ~d^{\times} \alpha'_f.
\]
Furthermore, our Schwartz-Bruhat functions $\Phi_f(k\alpha_f')$ are invariant on $U(3D)$. Similarly, $|\cdot|_f$ is trivial on units, thus on $U(3D)$, while $\chi_D$ is invariant on $U(3D)$ by definition. Moreover, $\varphi$ is trivial on all the units at all the unramified places, while, at $3$, $\varphi$ is invariant under $1+3\ZZ_3[\omega]$, thus it is trivial on all of $U(3D)$. Thus we can take the quotient by $U(3D)$ as well. Note that the integral is now a finite sum:
\[
Z_f(s, \chi_D\varphi, \Phi_f)
=
\vol(U(3D))
\sum\limits_{\alpha''_f \in U(3D)\setminus \AAA_{K, f}^{\times}/K^{\times}}\left(\sum_{k\in K^{\times}} \frac{k}{|k|_{\CC}^{2s}}\Phi_f(k\alpha_f'')\right) \chi_{D, f}(\alpha''_f) \varphi_f(\alpha''_f) |\alpha''_f|_f^s.
\]
 We compute $\vol(U(3D))=\vol(1+3\ZZ_3\omega) \prod\limits_{p|D}\vol(\ZZ+D\ZZ_p[\omega])=V_{3D}$ and, changing notation, we get:
 \[
 Z_f(s, \chi_{D, f}\varphi_f, \Phi_f)=V_{3D}\sum_{\alpha_f \in U(3D)\setminus\AAA_{K, f}^{\times}/K^{\times}} I(s, \alpha_f, \Phi_f)\chi_{D, f}(\alpha) \varphi_f(\alpha).
 \]

Finally together with (\ref{L_to_Z0}) we get the result of the lemma.  \end{proof}

\subsection{Representative classes of $\Cl(\OO_{3D})$}
We will use the following lemma (see \cite{RV}) that is easy to show:

\begin{Lem}
Any primitive ideal of $\OO_K$ can be be written in the form $\A=[a, \frac{-b+\sqrt{-3}}{2}]_{\ZZ}$ as a $\ZZ$-module, where $b$ is an integer (determined only modulo $2a$) such that  $b^2\equiv -3 \mod 4a$ and $\Nm \A=a$. 

Conversely, given an integer satisfying the above congruence and $\A$ defined as above, we get that $\A$ is an ideal in $\OO_K$ of norm $a$.
\end{Lem}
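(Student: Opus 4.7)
The plan is to view each ideal $\A \subset \OO_K = \ZZ[\omega]$ (with $\omega = \frac{-1+\sqrt{-3}}{2}$) as a rank-$2$ $\ZZ$-sublattice of $\OO_K$, produce a canonical $\ZZ$-basis of Hermite type, and then translate the only nontrivial ideal-closure condition — stability under multiplication by $\omega$ — into the congruence $b^2 \equiv -3 \pmod{4a}$.

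For the forward direction, I would let $a$ be the smallest positive integer in $\A$, so that $\A \cap \ZZ = a\ZZ$ and $\Nm\A = [\OO_K : \A]$. The $\omega$-coefficients of elements of $\A$ form a subgroup $c\ZZ \subset \ZZ$, and primitivity of $\A$ is precisely the assertion that $c = 1$. This yields a $\ZZ$-basis of the form $\{a,\, m+\omega\}$ with $m$ determined modulo $a$. Rewriting $m+\omega = \tfrac{(2m-1)+\sqrt{-3}}{2}$ and setting $b = 1-2m$ puts the basis into the advertised form, with $b$ odd and well-defined modulo $2a$. The claim $\Nm\A = a$ then drops out as the determinant of the change-of-basis matrix from $\{1,\omega\}$ to $\{a,\tfrac{-b+\sqrt{-3}}{2}\}$.

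The heart of the argument is extracting the congruence. Since $\A$ is already a $\ZZ$-module, ideal closure reduces to checking that $\omega \cdot a$ and $\omega\cdot\tfrac{-b+\sqrt{-3}}{2}$ each lie in $\A$. The first condition only forces $b$ to be odd. Expanding the second product and equating coefficients in the basis $\{a,\tfrac{-b+\sqrt{-3}}{2}\}$ yields a rational-integer identity that rearranges to exactly $4a \mid b^2 + 3$. The converse runs the same calculation backward: given $a$ and $b$ with $b^2 \equiv -3 \pmod{4a}$, the congruence already forces $b$ odd (so $\tfrac{-b+\sqrt{-3}}{2} \in \OO_K$), and the two closure identities show that $[a,\tfrac{-b+\sqrt{-3}}{2}]_{\ZZ}$ is an $\OO_K$-ideal of norm $a$.

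The main obstacle is bookkeeping rather than anything conceptual: one must invoke primitivity precisely at the step that forces the $\omega$-coefficient of some element of $\A$ to equal $1$ (rather than merely to share a common divisor), and then ensure that the parity constraint on $b$ and the two ambiguities ($m \bmod a$ on the lattice side versus $b \bmod 2a$ on the congruence side) line up on both sides.
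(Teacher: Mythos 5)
Your argument is correct and complete; the paper offers no proof of this lemma (it is quoted from \cite{RV} as standard), and your Hermite-basis computation --- reducing ideal closure to multiplication by $\omega$ on the two generators, which forces $b$ odd and then $4a \mid b^2+3$, with the norm read off as the index $[\OO_K:\A]$ --- is exactly the expected argument. The only point worth keeping explicit in a write-up is the equivalence you flag between primitivity and the $\omega$-coefficient subgroup being all of $\ZZ$, which holds because a common divisor $c>1$ of the $\omega$-coefficients exhibits $\A$ as $c$ times a smaller ideal.
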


We will use the notation $k_{\A}$ for the generator $k_{\A}\equiv 1\mod 3$ of a primitive ideal $\A$ in $\OO_K$. If we choose a lattice such that $\A=[a, \frac{-b+\sqrt{-3}}{2}]_{\ZZ}$, we denote the corresponding CM point $\tau_{\A}=\frac{-b+\sqrt{-3}}{2a}.$

\bigskip

We can write adelically $\Cl(\OO_{3D}) \simeq U(3D)\setminus\AAA_{K, f}^{\times}/K^{\times}$. This follows from the Strong approximation theorem, as $K$ is a PID and thus we have $\ds U(3D)\setminus\AAA_{K, f}^{\times}/K^{\times}\cong (\prod\limits_{p|3D}(\ZZ_p[\omega])^{\times}/(\ZZ+3D\ZZ_p[\omega])^{\times})/\left<-\omega\right>.$  Then we can define the map $\ds \prod_{p|3D}(\ZZ_p[\omega])^{\times}/(\ZZ+3D\ZZ_p[\omega])^{\times}/\left<-\omega\right> \ra I(3D)/P_{\ZZ, 3D}$ given by
\[
(\alpha_v)_{v|3D} \ra (k_{\alpha}),
\] 
where we choose the representative $\pm \alpha \omega^{k}$, such that $\alpha_3\equiv 1\mod 3$, and
 $k_{\alpha}$ is an element of $\OO_K$ such that $k_{\alpha}\equiv \alpha_v \mod 3D$. Note that this is well defined as $(k_{\alpha})$ gives us a unique class in $\Cl(\OO_{3D})$, and two elements $\alpha_1, \alpha_2$ get sent to the same class in $\Cl(\OO_{3D})$ only if $\alpha_1\equiv \alpha_2 \mod 3D$. 
 
 \bigskip
Thus for $\alpha_f \in \widehat{\OO}_K$  we can choose a class $[\A_{\alpha}]$ in  $\Cl(\OO_{3D})$ by taking a representative $\A_{\alpha_f}=(k_{\alpha})$, for $k_{\alpha}\in \OO_{K}$ such that $k_{\alpha} \equiv \alpha_p \mod 3D\ZZ_p[\omega]$ for $p|3D$.  Note that this choice is not unique. However, we can pick the representatives $\A_{\alpha}$ to be primitive ideals.




Thus we can further write $\A_{\alpha}$ as a $\ZZ$-lattice $\A_{\alpha}=[a, \frac{-b+\sqrt{-3}}{2}]_{\ZZ}$, where $a=\Nm\A_{\alpha}$ and $b$ is chosen (not uniquely) such that $b^2\equiv -3 \mod 4a$. We define the corresponding CM point $\tau_{\A_{\alpha}}=\frac{-b+\sqrt{-3}}{2a}$.

\subsection{Eisenstein series of weight $1$}

We will now connect $I(s, \alpha_f, \Phi_f)=\sum_{k\in K^{\times}} \frac{k}{|k|_{\CC}^{2s}} \Phi_f(k\alpha_f)$ to an Eisenstein series. We define the following classical Eisenstein series of weight $1$: 
\[
E_{\varepsilon}(s, z)=\sum\limits_{m, n}{'} \frac{\varepsilon(n)}{(3mz+n)|3mz+n|^s}.
\]
Here the sum is taken over all $m, n \in \ZZ$ except for the pair $(0, 0)$, and $\varepsilon=\left(\frac{\cdot}{3}\right)$ is the quadratic character associated to the field extension $K/\QQ$. The Eisenstein series $E_{\varepsilon}(s, z)$ does not converge absolutely for $s=0$, but we can still compute its value using the Hecke trick (see \cite{H}). We compute its Fourier expansion at $s=0$ in the following section.

Using this notation, we have the following equality:

\begin{Lem} For $\alpha_f \in \prod\limits_{v\nmid \infty} \OO_{K_v}^{\times}$, let $\A_{\alpha_f}=(k_{\alpha})$ be a choice of an ideal in the corresponding class of $\Cl(\OO_{3D})$. We write $\A_{\alpha}=[a_{\alpha}, \frac{-b+\sqrt{-3}}{2}]_{\ZZ}$ and take $\tau_{\A_{\alpha}}=\frac{-b+\sqrt{-3}}{2a_{\alpha}}$ the corresponding CM point. Then we have:
\[ I(s, \alpha_f, \Phi_f)= \frac{1}{2}\frac{(\Nm\A_{\alpha})^{1-s}}{k_{\alpha}} E_{\varepsilon}(s, D\tau_{\overline{\A_{\alpha}}}).
\]
\end{Lem}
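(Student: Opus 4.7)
The plan is to unfold $I(s,\alpha_f,\Phi_f) = \sum_{k\in K^\times}\frac{k}{|k|_{\CC}^{2s}}\Phi_f(k\alpha_f)$ by a place-by-place analysis of the support of $\Phi_f$, perform a change of variables to land inside the ideal $\A_\alpha$, and then match the resulting lattice sum to the Eisenstein series.

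First I would examine $\Phi_f(k\alpha_f) = \prod_v \Phi_v(k\alpha_v)$ at each finite place. At $v\nmid 3D$ the fact that $\alpha_v \in \OO_{K_v}^\times$ makes $\Phi_v(k\alpha_v) = \Char_{\OO_{K_v}}(k)$, forcing $k$ to be integral; at the places $v|D$ and $v=\sqrt{-3}$ I would use the defining congruence $k_\alpha \equiv \alpha_p \pmod{3D\ZZ_p[\omega]}$ (which makes $u_v := \alpha_v/k_\alpha$ satisfy $u_v\equiv 1\pmod{3D\OO_{K_v}}$) to replace $k\alpha_v$ by $kk_\alpha$ modulo $3D$. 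The upshot is that $k\in\OO_K$ and $kk_\alpha$ is congruent, modulo $3D\OO_K$, to a rational integer coprime to $D$ and $\equiv 1\pmod 3$.

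Next I substitute $k' = kk_\alpha$, so $k'$ ranges over elements of $\A_\alpha = (k_\alpha)$ subject to the congruence above. Using $|k_\alpha|_\CC^2 = \Nm(k_\alpha) = \Nm\A_\alpha$ this gives
\[
I(s,\alpha_f,\Phi_f) = \frac{(\Nm\A_\alpha)^s}{k_\alpha}\sum_{k'\in\A_\alpha}^{(\ast)}\frac{k'}{|k'|_{\CC}^{2s}}.
\]
Now I parametrize $\A_\alpha = [a_\alpha,\tfrac{-b+\sqrt{-3}}{2}]_\ZZ$, writing $k' = n a_\alpha + m_2\tfrac{-b+\sqrt{-3}}{2}$ with $n,m_2\in\ZZ$. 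Expressing $k'$ in the basis $\{1,\omega\}$ of $\OO_K$, the congruence $(\ast)$ translates cleanly: the $\omega$-coefficient forces $m_2\equiv 0\pmod{3D}$, so write $m_2 = -3mD$, and the remaining constraint becomes $na_\alpha \equiv 1\pmod 3$ together with $(n,D)=1$. A direct computation then yields
\[
\overline{k'} = na_\alpha + 3mD\cdot\tfrac{b+\sqrt{-3}}{2} = a_\alpha\bigl(n + 3mD\,\tau_{\overline{\A_\alpha}}\bigr) = a_\alpha(3mz+n),\qquad z = D\tau_{\overline{\A_\alpha}},
\]
so that $|k'|_{\CC}^{2s} = a_\alpha^{2s}|3mz+n|_\CC^{2s}$ and each summand of the lattice sum becomes precisely the summand of the Eisenstein series after extracting the factor $a_\alpha^{1-2s}$.

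Finally, I would handle the mod-$3$ condition via the character $\varepsilon$: the involution $(m,n)\mapsto(-m,-n)$ sends $k'\mapsto -k'$, flips the sign of the summand, and swaps the two residue classes $n\equiv\pm 1\pmod 3$; combined with $\varepsilon(-n) = -\varepsilon(n)$ this identifies the restricted sum (over $n$ in the single class making $na_\alpha\equiv 1\pmod 3$) with $\tfrac{1}{2}$ times the $\varepsilon$-weighted sum that defines $E_\varepsilon(s,z)$. Collecting the prefactors $\frac{(\Nm\A_\alpha)^s}{k_\alpha}\cdot a_\alpha^{1-2s}\cdot\frac{1}{2} = \tfrac12\frac{(\Nm\A_\alpha)^{1-s}}{k_\alpha}$ gives the claimed identity.

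The main obstacle I expect is the bookkeeping in Step~3: one must verify that the local support conditions on $\Phi_f(k\alpha_f)$ — especially the interplay at primes $p\mid D$ between the integer coset condition in $\Phi_p$ and the shift $u_v$ between $\alpha_v$ and $k_\alpha$ — translate into exactly the congruences on the pair $(n,m_2)$ that recover the full unrestricted Eisenstein sum (once the $\varepsilon(n)$-weighting is accounted for) rather than a proper subsum. Getting the exponents $(\Nm\A_\alpha)^{1-s}$ right is also sensitive to the precise convention used for $|\cdot|$ in the definition of $E_\varepsilon$, and requires careful tracking of the relation $|k_\alpha|_\CC^2 = \Nm\A_\alpha$ across the substitutions.
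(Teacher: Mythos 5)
Your proposal follows essentially the same route as the paper's proof: a place-by-place support analysis of $\Phi_f$, the substitution $k'=kk_{\alpha}$ landing in $\A_{\alpha}\cap P_{\ZZ,3D}\cap P_{1,3}$, the lattice parametrization $k'=na_{\alpha}+m_2\frac{-b+\sqrt{-3}}{2}$ with $3D\mid m_2$, passage to the conjugate to produce $3mz+n$ with $z=D\tau_{\overline{\A_{\alpha}}}$, and the $(m,n)\mapsto(-m,-n)$ symmetrization that converts the restricted sum $n a_{\alpha}\equiv 1\pmod 3$ into $\tfrac12$ times the $\varepsilon$-weighted sum. The prefactor bookkeeping is right, and your derivation lands on exponent $2s-2$ in the Eisenstein variable, which agrees with the paper's own proof and with Corollary \ref{Eisenstein_linear_comb} (the $E_{\varepsilon}(s,\cdot)$ in the displayed statement of the lemma is a normalization slip in the paper, not an error on your part).

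The one thing worth stressing is that the ``main obstacle'' you flag is real and is not actually discharged in the paper either. The support of $\Phi_p$ for $p\mid D$ consists of elements congruent mod $D\ZZ_p[\omega]$ to an integer \emph{coprime} to $D$, so your derived constraint $(n,D)=1$ is genuinely forced, and the resulting lattice sum is a priori the subsum of $E_{\varepsilon}(2s-2, D\tau_{\overline{\A_{\alpha}}})$ over $n$ prime to $D$, not the full series. The paper's proof silently drops this: it describes $\A_{\alpha}\cap P_{\ZZ,3D}$ as the full sublattice $\{ma+3Dn\frac{-b+\sqrt{-3}}{2}\}$ even though the definition of $P_{\ZZ,3D}$ requires the integer residue to be prime to $3D$. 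So you have correctly isolated the only gap in this argument; to close it one must either modify the statement to a $D$-primitive Eisenstein sum (and track the discrepancy through the later M\"obius-type cancellation, as in Proposition \ref{absolute_value}) or argue that the omitted terms do not contribute. Apart from this shared issue, your outline is a faithful reconstruction of the paper's proof.
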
	

\begin{rmk} Note that the variable $\tau_{\overline{\A_{\alpha}}}$ on the left hand side  is not uniquely defined. However, the function is going to be invariant on the class $[\A_{\alpha}]$ in $\Cl(\OO_{3D})$.

\end{rmk}

\bigskip

\begin{proof} Recall that $\ds I(s, \alpha_f, \Phi_f)=\sum_{k\in K^{\times}} \frac{k}{|k|_{\CC}^{2s}} \Phi_f(k\alpha_f)$. We need to compute $\Phi_f(k\alpha_f)$. Note that $\Phi_v(k\alpha_v)\neq 0$ only for $k\alpha_v \in \OO_{K_v}$ at all places $v$, and since $\alpha_v \in \OO_{K_v}^{\times}$, we must have $k\in \OO_{K_v}$ as well for all $v$. This implies $k\in \OO_{K}$ and for all places $v\nmid 3D$ we get $\Phi_v(k\alpha_v)=1$ for $k\in \OO_K$. Thus we can rewrite:
\[
I(s, \alpha_f, \Phi_f)=\sum_{k\in \OO_K} \frac{k}{|k|_{\CC}^{2s}} \Phi_{3D}(k\alpha_{3D}),
\]
\noindent where $\Phi_{3D}=\prod_{v|3D} \Phi_v$ and $\alpha_{3D}=(\alpha_v)_{v|3D}$.

We can further compute $\Phi_v(k\alpha_v)$ for $v|3D$. Recall that we defined $\Phi_p=\Char_{(\ZZ+3D\ZZ_p[\omega])^{\times}}$ for $p|D$ and $\Phi_3=\Char_{(1+3\ZZ_3[\omega])^{\times}}$. Then we have $\Phi_{3D}(k\alpha_{3D})\neq 0$ iff $k\alpha_p \in a+3D\ZZ_p[\omega]$ for some integer $a$, $(a, p)=1$ and, for $p=3$, $k\alpha_{3}\in 1+3\OO_{K_3}$. 

Recall that we defined $k_{\alpha}$ such that $k_\alpha \equiv \alpha_p \mod 3D\ZZ_p[\omega]$ for all $p|3D$. Then $kk_{\alpha} \in a+3D\ZZ_p[\omega]$ for $(a, p)=1$ and $kk_{\alpha}\in 1+3\ZZ_3[\omega]$. Furthermore, for $k\in \OO_K$ we actually have $\Phi_{3D}(k\alpha_{3D})=\Phi_{3D}(kk_{\alpha})$. Then we can rewrite $I(s, \alpha_f, \Phi_f)$ using $k_{\alpha}$ as $\ds I(s, \alpha_f, \Phi_f)=\sum\limits_{k\in \OO_K} \frac{k}{|k|_{\CC}^{2s}} \Phi_{3D}(kk_{\alpha}).$ We can rewrite this further:

\[
I(s, \alpha_f, \Phi_f)=\frac{|k_{\alpha}|_{\CC}^{2s}}{k_{\alpha}}\sum_{k\in \OO_{K}} \frac{kk_{\alpha}}{|kk_{\alpha}|_{\CC}^{2s}} \Phi_{3D}(kk_{\alpha}),
\]

Finally, we will make this explicit. Note that we must have $kk_{\alpha} \in \A_{\alpha}$, where $\A_{\alpha}=(k_{\alpha})$, as well as $kk_{\alpha}\in a_p+D\ZZ_p[\omega]$ for some integer $a_p$, $(a_p, p)=1$, and $kk_{\alpha}\in 1+3\ZZ_3[\omega]$. By the Chinese remainder theorem, we can find an integer $a$ such that $a\equiv a_p \mod D$ and $a\equiv 1 \mod 3$. Then $kk_{\alpha}\in (a+D\prod\limits_{p|3D}\ZZ_p[\omega])  \cap \OO_K$, thus $kk_{\alpha} \in P_{\ZZ, 3D} \cap P_{1, 3}$. Here $P_{\ZZ, 3D}=\{k \in K: k\equiv a\mod 3D\OO_{K} ~\text{for some integer}~ a, (a, 3D)=1\}$ and  $  P_{1, 3}=\{k\in K: k\equiv 1 \mod 3\}$. We rewrite: 
\[
I(s, \alpha_f, \Phi_f)=\frac{|k_{\alpha}|_{\CC}^{2s}}{k_{\alpha}}\sum_{k\in \A_{\alpha} \cap P_{\ZZ, D}\cap P_{1, 3}} \frac{k}{|k|_{\CC}^{2s}}.
\]

Finally, we want to write the elements of $ \A_{\alpha} \cap P_{\ZZ, D}\cap P_{1, 3}$ explicitly. Recall that we can write $\A_{\alpha}$ as a $\ZZ$-lattice $\A_{\alpha}=[a, \frac{-b+\sqrt{-3}}{2}]_{\ZZ}$. Then all of the elements of $\A$ are of the form $ma+n\frac{-b+\sqrt{-3}}{2}$ for some integers $m, n\in \ZZ$. Moreover, note that the intersection of $\A$ and $P_{\ZZ, 3D}=\{k \in \OO_K: k \equiv n \mod {3D}, \text{for some integer}~ n, (n, 3D)=1\}$ is $\{ma+3Dn\frac{-b+\sqrt{-3}}{2}: m, n \in \ZZ\}$. Further taking the intersection with $P_{1, 3}$, we must have $ma\equiv 1$, thus, as $a$ is norm in $\OO_K$, $m \equiv 1 \mod 3$, and we can rewrite $I(s, \alpha_f, \Phi_f)$ in the form:
\[
I(s, \alpha_f, \Phi_f)
=
\frac{a^s}{k_{\alpha}}\sum\limits_{m, n \in \ZZ, m\equiv 1 (\text{mod} ~3)} 
\frac{ma+n\frac{-b+\sqrt{-3}}{2}}{|ma+3nD\frac{-b+\sqrt{-3}}{2}|_{\CC}^{2s}}.
\]

By changing $n\ra -n$ and taking out a factor of $a^{1-2s}$, we have:
\[
I(s, \alpha_f, \Phi_f)
=
\frac{a^{1-s}}{k_{\alpha}}\sum\limits_{\substack{m, n \in \ZZ, \\ m\equiv 1 (\text{mod} ~3)}} 
\frac{1}{(m+n\frac{b+\sqrt{-3}}{2a})|m+3nD\frac{b+\sqrt{-3}}{2a}|_{\CC}^{2s-2}}.
\]

Note that for $Re(s)>1$ the integral converges absolutely, and we can rewrite the sum as:
\[
I(s, \alpha_f, \Phi_f)
=
\frac{1}{2}\frac{a^{1-s}}{k_{\alpha}}\sum\limits_{m, n \in \ZZ} 
\frac{\varepsilon(m)}{(m+3nD\frac{b+\sqrt{-3}}{2a})|m+3nD\frac{b+\sqrt{-3}}{2a}|_{\CC}^{2s-2}},
\]

\noindent where $\varepsilon(m)=\left(\frac{m}{3}\right)$ is the usual quadratic character. On the right hand side we recognize the Eisenstein series $E_{\varepsilon}(2s-2, \tau_{\overline{\A_{\alpha}}})$ and  we get $I(s, \alpha_f, \Phi_f)
=
\frac{1}{2}\frac{a^{1-s}}{k_{\alpha}}E_{\varepsilon}(2s-2, D\tau_{\overline{\A_{\alpha}}}).$ By analytic continuation, we can extend the equality to all $s\in \CC$. \end{proof}

Now we can rewrite the linear combination in Lemma \ref{Z_linear} by taking representatives $\A$ for the classes of $\Cl(\OO_{3D})$. Note that for $\alpha \in \widehat{\OO}_K^{\times}$ with $\alpha\equiv 1\mod 3$ we have $\varphi_f(\alpha_f)=1$ and
\[
\chi_D(\alpha_f)=\chi_D((\alpha)_{p|3D})= \chi_D((k_{\alpha})_{p|3D})=\chi_D^{-1}((k_{\alpha})_{p\nmid 3D})=\overline{\chi_D(\A_{\alpha})}.
\]
Using the lemma above and after inverting each class $\A \ra\overline{\A}$ in $\Cl(\OO_{3D})$, we get:

\begin{cor}\label{Eisenstein_linear_comb} For all $s$, taking representative ideals $\A=[a, \frac{-b+\sqrt{-3}}{2}]_{\ZZ}$ for the classes in the ring class group $\Cl(\OO_{3D})$, we have:
\[
L_f(s, \chi_D\varphi)=\frac{1}{2}\sum_{[\A] \in \Cl(\OO_{3D})}E_{\varepsilon}(2s-2, D\tau_{\A})\chi_D(\A)\frac{(\Nm\A)^{1-s}}{k_{\overline{\A}}},
\]
where $\A=(k_{\A})$ with $k_{\A}\equiv 1\mod 3$ and $\tau_{\A}=\frac{-b+\sqrt{-3}}{2a}$ the associated CM points.
\end{cor}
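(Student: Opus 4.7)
The plan is to substitute the formula for $I(s,\alpha_f,\Phi_f)$ from the preceding lemma into the identity provided by Lemma~\ref{Z_linear}, and then transport the outer sum from $U(3D)\setminus\AAA_{K,f}^\times/K^\times$ to the ring class group $\Cl(\OO_{3D})$ via the bijection $\alpha_f\mapsto[\A_\alpha]$ recorded in the subsection on representative classes. Because $1+3\ZZ_3[\omega]\subset U(3D)$, each coset admits a representative $\alpha_f\in\widehat{\OO}_K^{\times}$ with $\alpha_{\sqrt{-3}}\equiv 1\pmod 3$, which forces the element $k_\alpha\in\OO_K$ satisfying $k_\alpha\equiv\alpha_v\pmod{3D\OO_{K_v}}$ for all $v\mid 3D$ to coincide with the canonical generator $k_{\A_\alpha}\equiv 1\pmod 3$ appearing in the hypothesis of the preceding lemma.

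Next I would evaluate the Hecke characters $\chi_D(\alpha_f)$ and $\varphi(\alpha_f)$ on such a representative. For $\chi_D$, I would use that the adelic character is trivial on $K^{\times}$ and split $\prod_v\chi_{D,v}(k_\alpha)=1$ into its factors at $v\mid 3D$ and $v\nmid 3D$; the unramified factor equals $\chi_D(\A_\alpha)$ by the definition $\chi_{D,v}(\varpi_v)=\chi_D(\pp_v)$, while at ramified $v\mid 3D$ the component is invariant on the $3D$-congruence subgroup, so $\chi_{D,v}(\alpha_v)=\chi_{D,v}(k_\alpha)$. Combining these gives $\chi_D(\alpha_f)=\chi_D(\A_\alpha)^{-1}=\overline{\chi_D(\A_\alpha)}$. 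A parallel splitting for $\varphi$, using $\varphi_\infty(z)=z^{-1}$, triviality of $\varphi_v$ on units at all $v\nmid 3$, and triviality of $\varphi_{\sqrt{-3}}$ on $1+3\OO_{K_{\sqrt{-3}}}$, yields $\varphi(\alpha_f)=1$. Putting everything together, each summand becomes $\tfrac{1}{2}\tfrac{(\Nm\A_\alpha)^{1-s}}{k_{\A_\alpha}}E_\varepsilon(2s-2,D\tau_{\overline{\A_\alpha}})\,\overline{\chi_D(\A_\alpha)}$.

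The last step is a reindexing by complex conjugation $\A_\alpha\leftrightarrow\overline{\A}$, which is an involution on $\Cl(\OO_{3D})$. Under this change of variables $\tau_{\overline{\A_\alpha}}$ becomes $\tau_\A$, the generator $k_{\A_\alpha}$ becomes $k_{\overline{\A}}$, and $\overline{\chi_D(\A_\alpha)}=\chi_D(\overline{\A_\alpha})$ becomes $\chi_D(\A)$; this produces the formula in the statement of the corollary. The main obstacle will be the careful bookkeeping of the local characters $\chi_{D,v}$ and $\varphi_v$ at the ramified places $v\mid 3D$, where they are specified only implicitly through weak approximation. In particular, the identity $\chi_D(\alpha_f)=\overline{\chi_D(\A_\alpha)}$ relies on the precise cancellation between the ramified and unramified parts of $\prod_v \chi_{D,v}(k_\alpha)=1$ forced by $\chi_D$ being trivial on $K^{\times}$, and the analogous evaluation of $\varphi$ requires that the chosen representative genuinely lies in $1+3\OO_{K_{\sqrt{-3}}}$ at the ramified prime above $3$.
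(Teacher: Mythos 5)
Your proposal is correct and follows essentially the same route as the paper: substitute the Eisenstein-series evaluation of $I(s,\alpha_f,\Phi_f)$ into Lemma~\ref{Z_linear}, identify $U(3D)\setminus\AAA_{K,f}^{\times}/K^{\times}$ with $\Cl(\OO_{3D})$ via representatives $\alpha_f\in\widehat{\OO}_K^{\times}$ with $\alpha_3\equiv 1\bmod 3$, compute $\varphi_f(\alpha_f)=1$ and $\chi_D(\alpha_f)=\overline{\chi_D(\A_\alpha)}$ by splitting the product $\prod_v\chi_{D,v}(k_\alpha)=1$ into ramified and unramified parts, and finally reindex by $\A\mapsto\overline{\A}$ (which, since $\A\overline{\A}=(\Nm\A)$ is trivial in the ring class group, is the same as the paper's inversion of classes).
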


\subsection{Fourier expansion of $E_{\varepsilon}(s, z)$ at $s=0$}

We want to connect the Eisenstein series  $\ds E_{\varepsilon}(s, z)=\sum_{c, d} {'}\frac{\varepsilon(d)}{(3cz+d)|3cz+d|^{2s}}$ to the theta function 
\[
\Theta_K(z)=\sum\limits_{m, n \in \ZZ}e^{2\pi i (m^2+n^2-mn)z}
\]
associated to the number field $K$. It is a modular form of weight $1$ for the congruence group $\Gamma_1(3)$. Note that this differs from the theta function $\Theta_K$ chosen by Rodriguez-Villegas and Zagier in \cite{RV-Z} by a factor of $1/2$. 

More precisely, we are going to show the following version of the Siegel-Weil theorem:

\begin{Thm}\label{siegel_weil} $\ds E_{\varepsilon}(0, z)=2L(1, \varepsilon)\Theta_K(z)$.
\end{Thm}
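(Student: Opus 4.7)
The plan is to realize both sides as holomorphic weight-one modular forms on $\Gamma_0(3)$ with character $\varepsilon$, and then pin down the proportionality constant by comparing their values at the cusp $i\infty$.

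First, I would invoke Hecke's convergence trick. For $\operatorname{Re}(s) > 0$ the double sum defining $E_{\varepsilon}(s,z)$ converges absolutely, transforms with weight $1$ and character $\varepsilon$ under $\Gamma_0(3)$, and admits meromorphic continuation in $s$. Because $\varepsilon$ is a primitive odd character (of conductor $3$), the standard analysis shows that the continuation to $s=0$ is \emph{holomorphic in $z$}: the non-holomorphic Maass-type correction that in general can appear in weight one vanishes in this case. Hence $E_{\varepsilon}(0,z)$ lies in $M_1(\Gamma_0(3), \varepsilon)$.

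Next, I would compute the constant term at $i\infty$. For $E_{\varepsilon}(0,z)$ the constant term comes entirely from the $m=0$ row of the defining sum, since the $m \neq 0$ rows decay exponentially as $\operatorname{Im}(z) \to \infty$ (as one sees after Fourier expansion in $\operatorname{Re}(z)$). The $m=0$ row collapses to
\[
\sum_{n\neq 0}\frac{\varepsilon(n)}{n\,|n|^{2s}} \;=\; 2\,L(1+2s, \varepsilon),
\]
using $\varepsilon(-1) = -1$; evaluation at $s=0$ yields $2L(1, \varepsilon)$. On the other side $\Theta_K(z)$ has constant term $1$.

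To conclude, I would use one-dimensionality of $M_1(\Gamma_0(3), \varepsilon)$ (standard via the dimension formulas, with no cusp forms at this weight/level/character): since both $E_{\varepsilon}(0,z)$ and $\Theta_K(z)$ sit in this one-dimensional space, matching the single constant term gives $E_{\varepsilon}(0,z) = 2L(1,\varepsilon)\,\Theta_K(z)$. A more hands-on alternative is to compute the full Fourier expansion of $E_{\varepsilon}(s,z)$ by Poisson summation in $n$ for each fixed $m\neq 0$, obtain at $s=0$ the expansion $2L(1,\varepsilon)\bigl(1 + 6\sum_{n\geq 1}(\sum_{d\mid n}\varepsilon(d))\,q^n\bigr)$, and match it against the well-known
\[
\Theta_K(z) = 1 + 6 \sum_{n\geq 1}\Big(\sum_{d\mid n}\varepsilon(d)\Big)\,q^n,
\]
which comes from $\OO_K = \ZZ[\omega]$ being a PID with six units. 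The main obstacle in either route is analytic: one must execute the Hecke regularization carefully enough to guarantee that the $s\to 0$ limit really is holomorphic in $z$, and to justify the constant-term extraction through the limit; once this is in hand, everything reduces to the elementary $L$-value and representation-number bookkeeping above.
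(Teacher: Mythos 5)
Your primary route is genuinely different from the paper's. The paper carries out the ``hands-on alternative'' you mention at the end: it splits off the $c=0$ row, writes the remaining rows in terms of $H(z,s)=\sum_m (z+m)^{-1}|z+m|^{-2s}$, imports the limit $\lim_{s\to 0}H(z,s)=-\pi i-2\pi i\sum_{n\ge 1}q^n$ from Shimura/Pacetti, evaluates the Gauss sum $G(\varepsilon)=\sqrt{-3}$, and matches the resulting $q$-expansion coefficient-by-coefficient with $\Theta_K(z)=1+6\sum_{N\ge 1}\bigl(\sum_{m\mid N}\varepsilon(m)\bigr)q^N$ via the ideal count in $\OO_K$. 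Your route replaces all of that by one-dimensionality of $M_1(\Gamma_0(3),\varepsilon)$ plus a single constant term, which is cleaner once the two structural inputs are granted; the paper's computation is more self-contained and in any case produces the explicit $q$-expansion, which is what gets used downstream. Two points in your version need more care than you give them. First, ``standard dimension formulas'' do not apply in weight one; you need a separate argument (no weight-one cusp forms until level $23$, or square the space into the one-dimensional $M_2(\Gamma_0(3))$). Second, your claim that the constant term ``comes entirely from the $m=0$ row, since the $m\neq 0$ rows decay exponentially'' is not right at $s=0$: each such row tends to the nonzero constant $-\pi i$ (times character values) as $\operatorname{Im}(z)\to\infty$ --- this is exactly the $-\pi i$ in the expansion of $H$ --- and the $m\neq 0$ contribution to the constant term vanishes only because $\sum_{r\bmod 3}\varepsilon(r)=0$. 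The conclusion $2L(1,\varepsilon)$ is correct, but for that reason, not the one you state; this is the same order-of-limits subtlety you flag as ``the main obstacle,'' so it must actually be resolved, not just acknowledged. (Also, absolute convergence requires $\operatorname{Re}(s)>1/2$ in the paper's normalization, not $\operatorname{Re}(s)>0$, though this is immaterial.)
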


{\it Proof.} We will show this by computing the Fourier expansion of $E_{\varepsilon}(s, z)$ at $s=0$ using the Hecke trick and comparing it to the Fourier expansion of $\Theta_K(z)$. We will follow closely the exposition of Pacetti \cite{P}. This is also done by Hecke in \cite{H}. We first rewrite $E_{\varepsilon}(s, z)$ in the form:
\[
E_{\varepsilon}(z, s)
=
\sum_{d}{'}\frac{\varepsilon(d)}{d^{1+2s}}+2\sum\limits_{c=1}^{\infty}\sum\limits_{r=0}^2\frac{\varepsilon(r)}{3^{2s+1}}\sum_{d\in \ZZ}\frac{\varepsilon(r)}{(\frac{3cz+r}{3}+d)|\frac{3cz+r}{3}+d|^{2s}}.
\]
We define for $z$ in the upper-half plane $H(z, s)=\sum\limits_{m\in \ZZ}\frac{1}{(z+m)|z+m|^{2s}}$ and then we can rewrite the form above as:
\[
E_{\varepsilon}(s, z)=2L(\varepsilon, s)+2\sum\limits_{c=1}^{\infty} \sum\limits_{r=0}^{2} \frac{\varepsilon(r)}{3^{2s+1}}H\left(\frac{3dz+r}{3}, s\right).
\]

Pacetti (\cite{P}), following Shimura (Lemma 1, p. 84, \cite{Sh}), computed the Fourier expansion of $H(z, s)$ when $s\ra 0$ to be $\lim\limits_{s\ra 0} H(s, z)=-\pi i -2\pi i \sum\limits_{n=1}^{\infty} q^n$. It gives us in the relation above:
\[
E_{\varepsilon}(0, z)
=
2L(\varepsilon, 1)+2\sum\limits_{c=1}^{\infty} \sum\limits_{r=0}^{2} \frac{\varepsilon(r)}{3}(-\pi i -2\pi i\sum\limits_{n=1}^{\infty}  e^{2\pi i nzc} \omega^{nr}).
\]

We compute separately the inner sum and get:
\[
 \sum\limits_{r=0}^{2} \frac{\varepsilon(r)}{3}(-\pi i +\sum\limits_{n=1}^{\infty}  e^{2\pi i nzc} \omega^{nr})
=
-\frac{2\pi i}{3} G(\varepsilon)\sum\limits_{n=1}^{\infty}  e^{2\pi i nzc} \varepsilon(n),
\]
\noindent where $G(\varepsilon)=\sum\limits_{r=0}^2 \varepsilon(r)\omega^r=\sqrt{-3}$ is the quadratic Gauss sum corresponding to $\varepsilon$. Then we can rewrite:
\[
E_{\varepsilon}(0, z)
=
2L(\varepsilon, 1)+\frac{4\pi \sqrt{3}}{3}\sum\limits_{N=1}^{\infty}( \sum_{m|N} \varepsilon(m)  )e^{2\pi i Nz}.
\]
Since $\varepsilon$ is a quadratic character, we compute $L(1, \varepsilon)=\frac{\pi\sqrt{3}}{9}$ (see Kowalski \cite{Kow}) and this gives us the Fourier expansion:
\begin{equation}\label{expansion_eisenstein}
E_{\varepsilon}(0, z)
=
\frac{2\pi\sqrt{3}}{9} (1+6\sum\limits_{N=1}^{\infty} (\sum_{m|N} \varepsilon(m)  )e^{2\pi i Nz}).
\end{equation}

 It is actually easy to show that $\ds  \sum_{m|n}\varepsilon(m)$ represents the number of ideals of norm $n$ in $\OO_K$ and we can recognize the sum in the bracket on the RHS of (\ref{expansion_eisenstein}) to equal the theta function $\Theta_K(z)=1+6\sum\limits_{\A}e^{2 \pi i (\Nm \A) z}$, which finishes the proof.

\subsection{Formula for $L(1, \chi_D\varphi)$}

Applying Corollary \ref{Eisenstein_linear_comb} for $s=1$ we get $\ds L_f(1, \chi_D\varphi)
=
\frac{1}{2}\sum_{[\A] \in \Cl(\OO_{3D})}\frac{1}{\bar{k}_{\A}} E_{\varepsilon}(0, D\tau_{\A})  \chi_D(\A).$ Furthermore, from Theorem \ref{siegel_weil}, this is the same as:

\begin{equation}\label{linear_theta}
L_f(1, \chi_D\varphi)
=
\frac{\pi\sqrt{3}}{9}\sum_{[\A] \in \Cl(\OO_{3D})}
\frac{1}{\bar{k}_{\A}} \Theta_K(D\tau_{\A})  \chi_D(\A).
\end{equation}

We need one more step before rewriting the formula as a trace. We will use the following lemma:

\begin{Lem}\label{theta_omega} For $\A=[a, \frac{-b+\sqrt{-3}}{2}]_{\ZZ}$ a primitive ideal of norm $\Nm\A=a$, with generator $\A=(k_{\A})$, where $k_{\A}\equiv 1 \mod 3$ and $\tau_{\A}=\frac{-b+\sqrt{-3}}{2}$, we have: 
\[
\Theta_K\left(\tau_{\A}\right)=\overline{k_{\A}}\Theta_K\left(\omega\right).
\]
\end{Lem}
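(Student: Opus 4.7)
The plan is to view the identity as a modular transformation. The theta series of the binary form $Q(m,n)=m^2-mn+n^2=\Nm(m+n\omega)$ is a weight-one modular form on $\Gamma_0(3)$ with quadratic character $\varepsilon$, so for any $\gamma=\begin{pmatrix}a'&b'\\c'&d'\end{pmatrix}\in\Gamma_1(3)$ the character disappears and $\Theta_K(\gamma z)=(c'z+d')\Theta_K(z)$. The strategy is to exhibit $\tau_\A$ as $\gamma\omega$ for a specific $\gamma\in\Gamma_1(3)$ whose automorphy factor at $z=\omega$ evaluates to $\overline{k_\A}$.

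To construct $\gamma$, I start from $\A=k_\A\OO_K=a[1,\tau_\A]_\ZZ$ (with $a=\Nm\A=k_\A\overline{k_\A}$), which yields $[1,\tau_\A]_\ZZ=\overline{k_\A}^{-1}\OO_K$. Consequently $\{\overline{k_\A},\overline{k_\A}\tau_\A\}$ is a $\ZZ$-basis of $\OO_K=[1,\omega]_\ZZ$. Writing $\overline{k_\A}=p+q\omega$ and $\overline{k_\A}\tau_\A=r+s\omega$ with $p,q,r,s\in\ZZ$, I set $\gamma=\begin{pmatrix}s&r\\q&p\end{pmatrix}$; dividing the two relations gives $\gamma\omega=\tau_\A$. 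Since both $\tau_\A$ and $\omega$ lie in the upper half-plane, one must have $\det\gamma=ps-qr=+1$, so $\gamma\in SL_2(\ZZ)$.

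It remains to check that $\gamma\in\Gamma_1(3)$ and apply the transformation. From $k_\A\equiv1\mod 3$ it follows that $\overline{k_\A}\equiv1\mod 3$; using the isomorphism $\OO_K/3\OO_K\cong\FF_3[\epsilon]/(\epsilon^2)$ with $\epsilon=\omega-1$, the expansion $\overline{k_\A}=(p+q)+q\epsilon$ forces $q\equiv0\mod 3$ and $p\equiv1\mod 3$, and then $ps-qr=1$ forces $s\equiv1\mod 3$. The transformation law now yields
\[
\Theta_K(\tau_\A)=\Theta_K(\gamma\omega)=(q\omega+p)\Theta_K(\omega)=\overline{k_\A}\,\Theta_K(\omega).
\]
The only non-formal ingredient is the modular transformation law for $\Theta_K$ on $\Gamma_0(3)$ with character $\varepsilon$ (which can be verified by Poisson summation on the quadratic lattice $(\OO_K,\Nm)$ or invoked from the theory of theta series of binary quadratic forms); everything else is linear algebra over $\ZZ$ together with a short computation in $\OO_K/3\OO_K$.
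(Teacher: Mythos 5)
Your proof is correct and is essentially the paper's argument: both rest on the weight-one modularity of $\Theta_K$ for $\Gamma_1(3)$ together with the identity $k_{\A}/a=1/\overline{k_{\A}}$, the only difference being that you build the matrix directly from the basis change $\overline{k_{\A}}[1,\tau_{\A}]_{\ZZ}=[1,\omega]_{\ZZ}$ sending $\omega\mapsto\tau_{\A}$, whereas the paper applies a $\Gamma_1(3)$ matrix sending $\tau_{\A}$ to a $\ZZ$-translate of $\omega$ and then invokes the period $1$ of $\Theta_K$. You have also, correctly, read $\tau_{\A}$ as $\frac{-b+\sqrt{-3}}{2a}$; the denominator ``$2$'' in the lemma's statement is a typo, as the proof and the subsequent use of the lemma make clear.
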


\begin{proof} Since $\A=[a, \frac{-b+\sqrt{-3}}{2}]_{\ZZ}$ as a $\ZZ$-lattice, we can write its generator $k_{\A} $ in the form $k_{\A}=ma+3n\frac{-b+\sqrt{-3}}{2}$ for some integers $m, n$ such that $m\equiv 1(3)$ and $\gcd(m, 3n)=1$. Then we can find integers $A, B$ such that $mA+3nB=1$, and thus $\left(\begin{smallmatrix}A & B \\ 3n & m \end{smallmatrix}\right)$ is a matrix in $\Gamma_1(3)$. Since $\Theta(z)$ is a modular form of weight $1$ for $\Gamma_1(3)$, we have $ \Theta_K\left(\left(\begin{smallmatrix}A & B \\ 3n & m \end{smallmatrix}\right)\tau_{\A}\right)=\left(m+3n\tau_{\A}\right)\Theta_K\left(\tau_{\A}\right)$.
 
Noting that $3n\tau_{\A}+m=k_{\A}/a=1/\overline{k_{\A}}$, we can compute the term on the LHS to be $\Theta_K((A\tau_{\A}+B)\overline{k_{\A}})$ and, after expanding, we are evaluating $\Theta_K$ at  $-3nA\frac{b^2+3}{4a}+abB+\frac{b(-mA+3nB)}{2}+\frac{\sqrt{-3}}{2}$. Note that $mA-3nB=1$ implies that $mA$ and $3nB$ have different parities. Also, $b$ is odd and $b^2+3\equiv 0 \mod 4a$. Then $-3nA\frac{b^2+3}{4a}+abB+\frac{b(-mA+3nB)+1}{2}\in \ZZ$ and thus using the period $1$ of $\Theta_K$ we get $\Theta_K\left(\left(\begin{smallmatrix}A & B \\ 3n & m \end{smallmatrix}\right)\tau_{\A}\right)=\Theta_K\left(\omega\right)$. This finishes the proof. \end{proof}

Using the Lemma above we can rewrite (\ref{linear_theta}) as:
\begin{equation}\label{final_form}
L_f(1, \varphi\chi_D) = \frac{\pi\sqrt{3}}{9}\Theta_K\left(\omega\right)\sum_{[\A] \in \Cl(\OO_{3D})}\frac{\Theta_K(D \tau_{\A})}{\Theta_K(\tau_{\A})}\chi_D(\A).
\end{equation}


Now we will rewrite the formula (\ref{final_form}) as a trace. We can define $\ds f(z)=\frac{\Theta_K(Dz)}{\Theta_K(z)}$ and this is a modular function for $\Gamma_0(3D)$. We will prove in Section \ref{Shimura} in Proposition \ref{Galois_conj_theta} that $f(\omega)\in H_{3D}$, the ring class field of corresponding to the order $\OO_{3D}$. Moreover, we show in the same proposition that, for $\A=[a, \frac{-b+\sqrt{-3}}{2}]_{\ZZ}$ a primitive ideal in $\OO_K$, we have the Galois conjugate: 
\[
f(\omega)^{\sigma_{\A}^{-1}}=f(\tau_{\A}),
\]
where $\sigma_{\A}$ is the Galois action corresponding to the ideal $\A$ via the Artin map.

Furthermore, from Corollary \ref{GaloisD} we have $(D^{1/3})^{\sigma_{\A}^{-1}}=D^{1/3}\chi_D(\A)$ and then formula (\ref{final_form}) becomes:

\begin{equation}
L(E_D, 1)
=
\frac{\pi\sqrt{3}}{9}D^{-1/3}\Theta_K(\omega)\sum_{[\A] \in \Cl(\OO_{3D})}\left(D^{1/3}\frac{\Theta_K(D\omega)}{\Theta_K(\omega)}\right)^{\sigma_{\A^{-1}}} 
\end{equation}

Moreover, $D^{1/3} \in H_{3D}$ (see for example Cohn \cite{Co}). Thus we can rewrite the sum on the left hand side as $\Tr_{H_{3D}/K} \left(D^{1/3}\frac{\Theta_K(D\omega)}{\Theta_K(\omega)}\right)$. We can compute the extra terms as well. Rodriguez-Villegas and Zagier in \cite{RV-Z} cite $\Theta_K\left(\frac{-9+\sqrt{-3}}{18}\right)=-6\Gamma\left(\frac{1}{3}\right)^3/(2\pi)^2$. Using several of the properties of $\Theta_K$ proved in the Appendix, we can compute $\Theta\left(\omega\right)=\Gamma\left(\frac{1}{3}\right)^3/(2\pi^2)$.

As the real period $\Omega_D$ of the elliptic curve $E_D$ is $\Omega_D=D^{-1/3}\frac{\sqrt{3}\Gamma\left(\frac{1}{3}\right)^3}{6\pi}$, we get the formula of Theorem \ref{thm_1_theta}:
\begin{equation}
L(E_D, 1)
=
\Omega_D\frac{1}{3}\Tr_{H_{3D}/K} \left(D^{1/3}\frac{\Theta_K(D\omega)}{\Theta_K(\omega)}\right).
\end{equation}

Note that this implies $S_D \in K$. Moreover, as $D^{1/3}\Theta(D\omega)/\Theta(\omega)$ is invariant under complex conjugation, we get $S_D\in \RR$ which furthermore implies $S_D\in \QQ$. We will show in Section \ref{int} that actually $3c_{3D}S_D \in \ZZ$.

\begin{rmk} If we take $D=D_1D_2^2$ such that $D_0=D_1D_2$ is square-free, note that the character $\chi_D=\chi_{D_1}\overline{\chi_{D_2}}$ is well defined on the class group $\Cl(\OO_{3D_0})$. Then the above computations work for $D_0$ and the character $\chi_{D}=\chi_{D_1}\overline{\chi_{D_2}}$ instead of  $\chi_{D_0}=\chi_{D_1}\chi_{D_2}$ and we get:
\begin{equation}
L(1, \chi_D\varphi)=\frac{\pi\sqrt{3}}{9}\Theta_K\left(\omega\right)\sum_{[\A] \in \Cl(\OO_{3D_0})}\frac{\Theta_K(D_0\tau_{\A})}{\Theta_K(\tau_{\A})}\chi_{D}(\A).
\end{equation}
Note that, for $D=D_1D_2^2$, $L(E_D, s)=L(s, \chi_{D_1}\overline{\chi_{D_2}}\varphi)$ and thus we have:
\begin{equation}\label{S_D}
S_D=\frac{1}{3}\sum_{[\A] \in \Cl(\OO_{3D_0})}\frac{\Theta_K(D_0\tau_{\A})}{\Theta_K(\tau_{\A})}\chi_{D}(\A)D^{1/3}.
\end{equation}

As before from Corollary \ref{GaloisD}  we have $(D_1^{1/3}D_2^{2/3})^{\sigma_{\A}^{-1}}=D_1^{1/3}\chi_{D_1}(\A)D_2^{2/3}\overline{\chi_{D_2}(\A)}$ and finally we can write the expression above as:
\[
L(E_D, 1)=\frac{\Omega_D}{3}\Tr_{H_{3D_0}/K}\left(D^{1/3}\frac{\Theta_K(D_0\omega)}{\Theta_K(\omega)}\right).
\]

\end{rmk}


\section{Second formula for $S_D$}

For $r\in \ZZ$, $\mu \in \{1/2, 1/6\}$, we define the theta functions of weight $1/2$:
\[
\theta_{r, \mu}(z)=\sum\limits_{n\in \ZZ}e^{\pi i (n+r/D-\mu)^2z} (-1)^n.
\]

Throughout the paper we will use the notation $r\in \ZZ/D\ZZ$ to mean any family of representatives for the residues $r \mod D$. We denote $\theta_0=\theta_{0, 1/6}$. Note that $\theta_0(z)=\eta(z/3)$, where $\eta$ is the Dedekind eta function, while $\sum\limits_{\substack{r\in \ZZ/D\ZZ\\ r\equiv 1 (6)}} \theta_{r, 1/6}(z)=\eta\left(\frac{z}{3D^2}\right)$.


In this section we will use a Factorization formula of Rodriguez-Villegas and Zagier from \cite{RV-Z2} to show the following theorem:

\begin{Thm}\label{thm_r} In the case of $D=\prod\limits_{p_i\equiv 1 (3)} p_i^{e_i}$, let $D_0=\prod\limits_{p_i|D} p_i$ be the radical of $D$ and $\sigma(D)$ the number of distinct prime divisors of $D$. Then $S_D$ is an integer square up to an even power of $3$ and we have:
\begin{equation}\label{eqn_r}
 S_D=\frac{(-1)^{\sigma(D)}}{3^{\sigma(D)+2}} T_D^2,
\end{equation}
where $T_D/3 \in \ZZ$ if $\sigma(D)$ is even and $T_D/\sqrt{-3}\in \ZZ$ if $\sigma(D)$ is odd. We have the exact formula:
\[
T_{D}=\frac{\omega^{k_0}}{\varphi(D_0)}\Tr_{H_{\OO}/K} \left(\frac{\theta_{1, 1/6}(\tau)}{\theta_0(\tau)}\overline{\pi_1}^{-2/3}\pi_2^{1/3}\right).
\]

Here $\tau=\frac{-b+\sqrt{-3}}{2}$ is a CM-point, with $b^2\equiv -3 \mod 12D^2$, $\pi_1, \pi_2$ are elements in $\OO_K$ such that $\pi_1, \pi_2\equiv 1\mod 3$, $\pi_1\pi_2$ has norm $D_0$ and $\pi_1\pi_2^2$ has norm $D$, and such that $(\pi_1\pi_2)^2$ divides the ideal $\left(\frac{-b+\sqrt{-3}}{2}\right)$, $H_{\OO}$ is the ray class field of modulus $3D_0$ and $\omega^{k_0}$ is the unique cube root of unity that makes $T_D$ real or purely imaginary.


\end{Thm}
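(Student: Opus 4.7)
The plan is to start from the formula for $S_D$ obtained at the end of the previous section,
\[
S_D = \frac{1}{3}\sum_{[\A]\in\Cl(\OO_{3D_0})} \frac{\Theta_K(D_0\tau_\A)}{\Theta_K(\tau_\A)} \chi_D(\A)\, D^{1/3},
\]
and factor the weight-one theta ratio as the square of a weight-$1/2$ ratio. Since every prime divisor of $D$ is $\equiv 1\pmod 3$, each $p\mid D$ splits in $K$, so $D_1 = \pi_1\bar\pi_1$ and $D_2 = \pi_2\bar\pi_2$ with $\pi_j\equiv 1\bmod 3$. I would choose the CM point $\tau=\frac{-b+\sqrt{-3}}{2}$ so that $(\pi_1\pi_2)^2\mid(\tau)$; this compatibility is exactly what makes the binary quadratic form $\Nm_{K/\QQ}$ split, at the level of lattices, into two linear pieces corresponding to $\pi_j$ and $\bar\pi_j$.

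Next I would invoke the factorization lemma of Rodriguez-Villegas and Zagier from \cite{RV-Z2}, which converts a weight-one theta attached to a binary form that splits over an auxiliary field into a sum of products of weight-$1/2$ thetas. Applied to $\Theta_K(D_0\tau_\A)/\Theta_K(\tau_\A)$ at the chosen CM points, it should yield
\[
D^{1/3}\frac{\Theta_K(D_0\tau_\A)}{\Theta_K(\tau_\A)}\chi_D(\A)
= \frac{(-1)^{\sigma(D)}}{3^{\sigma(D)+2}}\varphi(D_0)^2 \left(\frac{\theta_{1,1/6}(\tau_\A)}{\theta_0(\tau_\A)}\,\overline{\pi_1}^{-2/3}\pi_2^{1/3}\right)^{\!2},
\]
after a careful matching of the Schwartz–Bruhat data with the two half-theta series and identification of the $\pi_j$-twists. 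The sign $(-1)^{\sigma(D)}$ and the power of $3$ come from the discriminant and Gauss-sum constants in the splitting; the factor $D^{1/3}\chi_D(\A) = (D^{1/3})^{\sigma_\A^{-1}}$ (by Lemma \ref{GaloisD}) is what produces the $\overline{\pi_1}^{-2/3}\pi_2^{1/3}$ inside the square.

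Then I would package the resulting sum over $\Cl(\OO_{3D_0})$ as a Galois trace by Shimura's reciprocity law, but now on the ray class field $H_\OO$ of modulus $3D_0$ rather than on the ring class field $H_{3D_0}$. Each factor $\theta_{1,1/6}(\tau)/\theta_0(\tau)\cdot \overline{\pi_1}^{-2/3}\pi_2^{1/3}$ is a modular function value for a congruence subgroup corresponding to $H_\OO$, and Shimura reciprocity identifies its Galois conjugates over $K$ with the values at the $\tau_\A$ as $\A$ ranges over the classes. The factor $1/\varphi(D_0)$ in the definition of $T_D$ exactly absorbs the degree $[H_\OO:H_{3D_0}]$, so summing the squares over $\Cl(\OO_{3D_0})$ becomes the square of the trace from $H_\OO$ down to $K$. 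Integrality of $T_D/3$ or $T_D/\sqrt{-3}$ then follows from the already-established integrality of $3c_{3D}S_D$ combined with the observation that $\theta_0(\tau) = \eta(\tau/3)$ is a unit away from $3$. Reality (resp.\ purely imaginary) of $T_D$ is seen from complex conjugation: conjugation exchanges $\pi_j\leftrightarrow\bar\pi_j$ and acts on the class group by inversion, contributing a sign $(-1)^{\sigma(D)}$, which pins down whether $T_D\in\QQ$ or $T_D\in\sqrt{-3}\QQ$ and hence determines $\omega^{k_0}$ uniquely.

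The main obstacle will be the combinatorial bookkeeping in the factorization step: one must align the class-group representatives $\A$, the decomposition $\OO_K \supset \pi_1\pi_2\OO_K$ of the lattice used by $\Theta_K$, and the precise shifts $r/D - 1/6$ appearing in $\theta_{r,1/6}$, so that the cross-terms vanish and a perfect square emerges with the correct twist $\overline{\pi_1}^{-2/3}\pi_2^{1/3}$. Getting the unique cube root $\omega^{k_0}$ to come out of Shimura reciprocity uniformly across the ray class group — rather than class-by-class — is the most delicate piece, and is what forces the passage from the ring class field $H_{3D_0}$ (where $S_D$ naturally lives) up to the ray class field $H_\OO$ (where the weight-$1/2$ quotient is defined).
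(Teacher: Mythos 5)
There is a genuine structural gap in your factorization step, and it propagates to the trace identity. The Rodriguez--Villegas--Zagier factorization formula does \emph{not} express each individual term $\Theta_K(D_0\tau_{\A})/\Theta_K(\tau_{\A})$ as a perfect square of a weight-$1/2$ ratio; it expresses it as a \emph{Hermitian bilinear} sum
\[
\frac{\Theta_{\mu}(D\tau_{\A})}{\Theta_K(\tau_{\A})}
=\frac{3/2}{D}\sum_{r}f_{ar,\mu}\bigl(\tau_{\A^2\A_1}\bigr)\,\overline{f_{r,\mu}\bigl(\tau_{\A_1}\bigr)},
\]
with two different arguments and a complex conjugate. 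The square structure only appears \emph{after} summing over all of $\Cl(\OO_{3D_0})$: the double sum over class representatives $s$ and residues $r$ reorganizes into $S_{1/6}=D^{-2/3}\bigl|\sum_{s}f_{s,1/6}(\tau)\chi_{\pi}(s)\bigr|^2$, and this reorganization requires a nontrivial cross-term cancellation (the sums $S_{p_{i_1}\cdots p_{i_k}}$ over $r$ with $\gcd(r,D)>1$ vanish by a character-sum argument). Your claimed term-by-term identity would make $S_D$ a sum of squares of Galois conjugates, and a sum of squares is not the square of the trace; the identity $\sum_{\sigma}(x^{\sigma})^2=(\sum_{\sigma}x^{\sigma})^2$ you are implicitly using is false. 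You also never address the decomposition $3c_{3D}S_D=S_{1/6}+\tfrac12 S_{1/2}$ and the vanishing of the $\mu=1/2$ piece, which follows from the antisymmetry $\theta_{r,1/2}=-\theta_{2D-r,1/2}$.

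A second missing ingredient is the passage from the absolute value squared $|R_{D,1/6}(\tau)|^2$ to an honest square $(-1)^{\sigma(D)}T_D^2$. This requires relating $R_{D,1/6}(\tau)$ to its own complex conjugate, which is done via the modular transformation $z\mapsto -1/z$ of the half-integral weight thetas together with the explicit evaluation of the cubic Gauss sum $G(\chi_{\pi_i})=-\overline{\pi_i}^{2/3}\pi_i^{1/3}\omega^{k_i}$; the sign $(-1)^{\sigma(D)}$ is the product of the minus signs from these Gauss sums, one per prime divisor of $D$ --- not, as you suggest, a consequence of complex conjugation inverting the class group. Your Shimura-reciprocity and integrality remarks at the end are in the right spirit (and essentially match the paper's Lemmas on Galois conjugates of $f_{r,\mu}(\tau)$ and the integrality section), but without the $|{\cdot}|^2$ collapse and the Gauss-sum step the argument does not produce the stated formula for $T_D$.
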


Below we discuss the details of $D$ square-free. All definitions and proofs can be easily extended to all $D$. We do that in Section \ref{not_square_free}.

Take $\tau=\frac{-b+\sqrt{-3}}{2}$ a CM point such that $b^2\equiv -3 \mod 12 D^2$ and an element $\pi\equiv 1\mod 3$ of norm $D$ in $\OO_K$ such that $\pi^2$ divides the ideal $(\frac{-b+\sqrt{-3}}{2})$.

\bigskip
We will use the notation:

\begin{itemize}
\item  $\ds \Theta_{\mu}(\tau)= \begin{cases}
\frac{3}{2}\Theta_K\left(\tau\right)-\frac{1}{2}\Theta_K\left(\tau/3\right), &\text{~for~} \mu=1/6 \\
\Theta_K\left(\tau/3\right),& \text{~for~} \mu=1/2.
\end{cases}$

\item $\ds S_{\mu}=\frac{2}{3}\sum_{[\A]\in \Cl(\OO_{3D})}\frac{\Theta_{\mu}(D\tau_{\A})}{\Theta_K(\tau_{\A})} \chi_{D}(\A)D^{1/3}$.

\end{itemize}

In Theorem \ref{thm1} we have proved that $3c_{3D}S_D=S_{1/6}+1/2S_{1/2}.$ We are actually going to show in Corollary \ref{S_zero} that $S_{1/2}=0$, thus it is enough to compute the formula (\ref{eqn_r}) for $S_{1/6}$.

Using a Factorization formula of Rodriguez-Villegas and Zagier from \cite{RV-Z2} we will write the theta functions $\Theta_{\mu}$ of weight $1$ as linear combinations of products of theta functions of weight $1/2$ in Proposition \ref{fact_r}. We define:
\[
\ds R_{D, \mu}(z)=\sum_{\substack{r \in (\ZZ/D\ZZ)^{\times} \\ r\equiv 1 (6)}} \frac{\theta_{r, \mu}(3z)}{\theta_0(3z)}\chi_{\pi}(r).
\]

We show in Lemma \ref{z_over_3} that $S_{1/6}=|R_{D, 1/6}(\tau)D^{-1/3}|^2$. Moreover, if we denote
\[
T_D=R_{D, 1/6}(\tau/3) \overline{\pi}^{-2/3} \omega^{k_0},
\]
for a cubic root of unity $\omega^{k_0}$, then $S_{1/6}=|T_{D, 1/6}(\tau)|^2$.

We show in Lemma \ref{trace_H} that $\ds T_D=\frac{1}{\varphi(D)}\Tr_{H_{\OO}/K} \frac{\theta_{1, 1/6}(\tau)}{\theta_0(\tau)} \overline{\pi}^{-2/3}\omega^{k_0}$ and that $T_{D} \in K$. Furthermore, we show in proposition \ref{M_D} that $T_D=(-1)^{\sigma(D)}\overline{T_D}$ and thus $T_D\in \QQ$ or $T_D/\sqrt{-3} \in \QQ$ and thus
\[
S_D=\frac{(-1)^{\sigma(D)}}{3c_{3D}}T_D^2.
\]

Moreover, in Section \ref{int} we show that $3c_{3D} S_D$ is an integer, hence $T_D/3\in \ZZ$ for $\sigma(D)$ even and $T_D/\sqrt{-3}\in \ZZ$ for $\sigma(D)$ odd. 

Finally, for $D$ a product of split primes, we have $S_D\neq 0$ only for $D\equiv 1\mod 9$. In this case the Tamagawa numbers equal $c_{3D}=3^{1+\sigma(D)}$, thus we have:
\[
S_D=
\begin{cases}
(T_D/3^{\sigma(D)/2+1})^2, & \text{ for } \sigma(D) \text{ even}, \\
((T_D/\sqrt{-3})/3^{(\sigma(D)+1)/2} )^2, & \text{ for } \sigma(D) \text{ odd}.
\end{cases}
\]
Hence $S_D$ is an integer square up to an even power of $3$ and this finishes the proof of Theorem \ref{thm_r}.




\subsection{Factorization lemma}

As in the previous section, we write a primitive ideal $\A$ as a lattice $\A=[a, \frac{-b+\sqrt{-3}}{2}]_{\ZZ}$ for $a=\Nm(\A)$ and $b^2\equiv -3\mod 4a$. We also define the CM point $\tau_{\A}=\frac{-b+\sqrt{-3}}{2a}$ corresponding to the $\ZZ$-lattice. We also denote by $k_{\A}$ the generator of $\A$ such that $k_{\A}\equiv 1 (3)$ and we write the generator in the form $k_{\A}=n_a a+ m_a \tau_{\A}$.

We start by recalling the Factorization Formula of Rodriguez-Villegas and Zagier (\cite{RV-Z2}, Theorem, page $7$) in the simplified case of $\alpha=p=0$:

\begin{Thm}\textbf{(Factorization formula.)}\label{factorization_formula_RVZ} For $a\in \ZZ_{>0}$, $\mu, \nu\in\QQ $, $z=x+yi\in \CC$, we have:
\begin{equation}\label{factorization_formula}
\sum_{m,n \in \ZZ} 
e^{2\pi i (m \nu+ n\mu)} 
e^{\pi(imn -\frac{|mz-n|^2}{2y})/a}
=
\sqrt{2ay}
\theta\left[\begin{matrix}a\mu \\ \nu\end{matrix}\right](a^{-1}z)\cdot 
\theta\left[\begin{matrix} \mu \\ -a\nu \end{matrix}\right](-a\bar{z}),
\end{equation}
where $ \theta\left[\begin{matrix}\mu \\ \nu \end{matrix}\right](z)=\sum\limits_{n\in \ZZ+\mu}e^{\pi in^2z+2\pi i \nu n}$ is a theta function of weight $1/2$.

\end{Thm}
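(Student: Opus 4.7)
The plan is to prove the identity by Poisson summation in $n$ on the LHS, followed by a reindexing that recovers the product of two theta factors on the RHS.

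First I would rewrite the exponent in a Poisson-friendly form. Using $z - \bar z = 2iy$ and the algebraic identity $2iymn - (mz - n)(m\bar z - n) = -(n - mz)^2 + 2iym^2 z$, one obtains
\[
\frac{\pi}{a}\Bigl(imn - \frac{|mz - n|^2}{2y}\Bigr) = \frac{\pi i m^2 z}{a} - \frac{\pi(n - mz)^2}{2ay}.
\]
Although $(n - mz)^2$ is complex, its real part is $(n - mx)^2 - m^2 y^2$, so Gaussian decay in $n$ is preserved and the inner sum converges absolutely.

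Next I would apply Poisson summation to the sum over $n$. For fixed $m$, a standard Gaussian Fourier transform (justified by the complex contour shift $t \mapsto t - mz$, legal because $e^{-\pi u^2/(2ay)}$ decays) yields
\[
\sum_{n \in \ZZ} e^{2\pi i n\mu - \pi(n-mz)^2/(2ay)} = \sqrt{2ay}\sum_{k \in \ZZ} e^{2\pi i m z(\mu - k) - 2\pi a y(\mu - k)^2},
\]
producing exactly the prefactor $\sqrt{2ay}$ appearing in the theorem and exchanging the $n$-sum for a dual $k$-sum.

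Finally I would reindex by $r = m - ak$, $s = -k$, which is a bijection $\ZZ^2 \leftrightarrow \ZZ^2$ with inverse $m = r - as$, $k = -s$. Substituting and collecting terms, the $y$-part of the exponent simplifies via the key identity
\[
\frac{m^2}{2a} + 2a\Bigl(\mu + \frac{m}{2a} - k\Bigr)^2 = \frac{(a\mu + r)^2}{a} + a(\mu + s)^2,
\]
while the analogous computation for the $x$-part gives $\tfrac{(a\mu+r)^2}{a} - a(\mu+s)^2$. Combining real and imaginary parts, the total exponent reconstitutes as
\[
\pi i z\,\frac{(a\mu + r)^2}{a} + 2\pi i \nu r \;-\; \pi i a \bar z\,(\mu + s)^2 - 2\pi i a\nu s,
\]
and the double sum factors as $\sqrt{2ay}\,\theta\!\left[\begin{matrix}a\mu \\ \nu\end{matrix}\right]\!(z/a)\cdot \theta\!\left[\begin{matrix}\mu \\ -a\nu\end{matrix}\right]\!(-a\bar z)$, up to canceling constants $e^{\pm 2\pi i a\mu\nu}$.

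The main obstacle will be the reindexing algebra: verifying that the cross-terms coming from $(r - as)^2$ and the dual Gaussian in $k$ combine so that the quadratic form splits cleanly as $(a\mu+r)^2/a + a(\mu+s)^2$ for the real part and $(a\mu+r)^2/a - a(\mu+s)^2$ for the imaginary part, producing exactly the $z$- and $\bar z$-dependence demanded by the two theta factors. This is elementary but delicate bookkeeping, and structurally reflects the fact that the original Hermitian form on $(m,n)$ admits an orthogonal decomposition along the $z$- and $\bar z$-directions once one passes through the Poisson dual.
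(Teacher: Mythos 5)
Your argument is correct. Note, however, that the paper does not prove this statement at all: Theorem \ref{factorization_formula_RVZ} is quoted verbatim from Rodriguez-Villegas and Zagier (\cite{RV-Z2}, Theorem, p.~7, specialized to $\alpha=p=0$), so there is no internal proof to compare against. Your derivation --- completing the square to write the exponent as $\pi i m^2 z/a - \pi(n-mz)^2/(2ay)$, applying Poisson summation in $n$ (which produces the factor $\sqrt{2ay}$ from the Gaussian Fourier transform after the contour shift $t\mapsto t-mz$), and then reindexing by $r=m-ak$, $s=-k$ so that the quadratic form splits as $\pi i z(a\mu+r)^2/a - \pi i a\bar z(\mu+s)^2$ --- checks out line by line, including the cancellation of the constants $e^{\pm 2\pi i a\mu\nu}$ against the linear terms $2\pi i\nu(a\mu+r)$ and $-2\pi i a\nu(\mu+s)$ in the definition of $\theta\left[\begin{smallmatrix}\mu\\ \nu\end{smallmatrix}\right]$. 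This is essentially the standard proof of the factorization formula (and in spirit the one in \cite{RV-Z2}), so the only thing you are doing ``differently'' from the paper is supplying a proof where the paper supplies a citation; the absolute convergence and the legitimacy of the interchange of summations for $y>0$ are adequately addressed by your remark on the Gaussian decay of the real part $(n-mx)^2-m^2y^2$ being dominated by the $n^2$ term.
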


A direct application of this is the following:

\begin{Lem}\label{sum_rD} With notation as above, we have:
\[
\sum_{r\in \ZZ/D\ZZ}
\frac{\sqrt{2ay}}{\sqrt{D}} 
\theta\left[\begin{matrix} a\mu+\frac{ar}{D} \\ \nu \end{matrix}\right]\left(D\frac{z}{a}\right)
\theta\left[\begin{matrix} \mu+\frac{r}{D}\\ -a\nu \end{matrix}\right]\left(-aD\overline{z}\right)
=
\sum_{m,n\in \ZZ} e^{2\pi i (m\nu+nD\mu)}e^{\pi  (mni-\frac{|n-mz|^2}{2y})\frac{D}{a}}
\]
\end{Lem}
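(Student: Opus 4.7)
The plan is to derive this identity by applying the Factorization Formula of Theorem \ref{factorization_formula_RVZ} after a suitable substitution and then averaging over residues modulo $D$.

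First, I would apply Theorem \ref{factorization_formula_RVZ} with $z$ replaced by $Dz$ and $\mu$ replaced by $\mu + r/D$, keeping $a$ and $\nu$ fixed. Since $z\mapsto Dz$ replaces the imaginary part $y$ by $Dy$, the prefactor becomes $\sqrt{2a\cdot Dy} = \sqrt{D}\,\sqrt{2ay}$, the theta arguments become $Dz/a$ and $-aD\bar z$, and the two theta characteristics become exactly $\left[\begin{matrix} a\mu+ar/D \\ \nu\end{matrix}\right]$ and $\left[\begin{matrix}\mu+r/D \\ -a\nu\end{matrix}\right]$, precisely the factors appearing in the summand of the lemma. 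Under this substitution the right-hand side of Theorem \ref{factorization_formula_RVZ} reads
\[
\sum_{m,n\in\ZZ} e^{2\pi i(m\nu + n\mu + nr/D)}\,e^{\pi(imn - |mDz-n|^2/(2Dy))/a}.
\]

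Next, I would divide both sides by $D$ so that the coefficient of the theta product on the left becomes $\sqrt{2ay}/\sqrt{D}$, matching the lemma, and then sum over $r\in\ZZ/D\ZZ$. The only $r$-dependence on the right-hand side lies in the factor $e^{2\pi i n r/D}$, and the orthogonality relation $\sum_{r\in\ZZ/D\ZZ} e^{2\pi i nr/D} = D\cdot \mathbf{1}_{D\mid n}$ collapses the inner double sum to pairs $(m,n)$ with $n = Dn'$. Substituting this gives $imn/a = iDmn'/a$ and $|mDz - Dn'|^2/(2Dy) = D|mz-n'|^2/(2y)$, which pulls a common factor $D/a$ out of the exponent; after renaming $n'\mapsto n$ and using $|mz-n|^2 = |n-mz|^2$, the right-hand side of the lemma is recovered exactly.

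Since the Factorization Formula converges absolutely on the upper half plane, all the above rearrangements of double sums are justified and no analytic subtlety arises. The proof is therefore essentially a bookkeeping exercise tracking the $D$-factors that arise from the substitution $z\mapsto Dz$ and from the character-sum collapse; the only real piece of insight is spotting that the substitution $z\mapsto Dz$ paired with the shift $\mu\mapsto \mu + r/D$ is precisely what forces $D\mid n$ after averaging and reproduces the desired $D/a$-scaled exponent on the right.
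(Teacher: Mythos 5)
Your proof is correct and follows the same route as the paper: substitute $\mu\mapsto\mu+r/D$, $z\mapsto Dz$ into the Factorization Formula, sum over $r\in\ZZ/D\ZZ$, exchange the sums, and use the orthogonality relation to restrict to $D\mid n$ before renaming $n=Dn'$. The bookkeeping of the $D$-factors in the prefactor and in the exponent matches the paper's computation exactly.
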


\begin{proof} Plugging in $\mu:=\mu+\frac{r}{D}$, $z:=Dz$ in (\ref{factorization_formula}) and summing up for $r$ in $\ZZ/D\ZZ$, we get:
\[
\sum_{r\in \ZZ/D\ZZ}\sqrt{2aDy}\theta\left[\begin{matrix} a\mu+\frac{ar}{D} \\ \nu \end{matrix}\right]\left(\frac{Dz}{a}\right)\theta\left[\begin{matrix} \mu+\frac{r}{D} \\ -a\nu \end{matrix}\right]\left(-a\overline{Dz}\right)
=
\sum_{r\in \ZZ/D\ZZ}\sum_{m,n\in \ZZ} e^{2\pi i (m\nu+n\mu+nr/D)}e^{\pi  (mni-\frac{|n-mDz|^2}{2Dy})\frac{1}{a}}
\]

Exchanging the two sums on the RHS we get $\ds \sum_{m,n\in \ZZ}e^{2\pi i (m\nu+n\mu)}e^{\pi  (mni-\frac{|n-mDz|^2}{2Dy})\frac{1}{a}} \sum_{r\in \ZZ/D\ZZ} e^{2\pi i nr/D}.$ The inner sum $\sum\limits_{r\in \ZZ/D\ZZ} e^{2\pi i nr/D}$ equals $D$ when $D|n$, and $0$ otherwise, thus we are only summing over the integers $n$ that are multiples of $D$. Rewriting $n=Dn'$,  after simplifying we get the result of the lemma.\end{proof}

\bigskip

Using the lemma above and the notation $\ds\theta_{r, \mu}(z)=\sum\limits_{n\in\ZZ}(-1)^n e^{\pi i \left(n+\frac{r}{D}-\mu\right)^2 z}$, we show:

\begin{Prop}\label{fact_r} For ideals $\A=[a, \frac{-b+\sqrt{-3}}{2}]_{\ZZ}, \A_1=[a_1, \frac{-b+\sqrt{-3}}{2}]_{\ZZ}$ and $b$ such that $b^2\equiv -3 \mod 4D^2a^2a_1$, we have:
\[
\Theta_{\mu}\left(D\tau_{\A}\right)
=
\frac{\sqrt[4]{3}e^{\pi i (a-1)/6 } }{D\sqrt{a_1}}\sum_{r\in \ZZ/D\ZZ}\theta_{ar, \mu}\left(\tau_{\A^2\A_1}\right)\overline{\theta_{r, \mu}\left(\tau_{\A_1}\right)}.
\]

\end{Prop}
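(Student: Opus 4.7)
My plan is to apply Lemma \ref{sum_rD} with carefully chosen parameters and then identify both sides with the quantities appearing in Proposition \ref{fact_r}. Specifically, I would substitute $\nu = 1/2$, set the lemma's parameter ``$a$'' equal to $a = \Nm(\A)$, and take
\[
z \;=\; \frac{-b+\sqrt{-3}}{2\,D\,a\,a_1}.
\]
A direct calculation then yields $Dz/a = \tau_{\A^2\A_1}$, $-aD\bar z = (b+\sqrt{-3})/(2a_1) = -\overline{\tau_{\A_1}}$, and $y = \Im z = \sqrt 3/(2Daa_1)$, so that the prefactor $\sqrt{2ay}/\sqrt D$ simplifies to $3^{1/4}/(D\sqrt{a_1})$, matching the constant in the proposition.

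With these substitutions, the LHS of Lemma \ref{sum_rD} becomes, up to this prefactor, a sum of products $\theta[a\mu+ar/D;1/2](\tau_{\A^2\A_1})\cdot\theta[\mu+r/D;-a/2](-\overline{\tau_{\A_1}})$. To identify this with $\theta_{ar,\mu}(\tau_{\A^2\A_1})\,\overline{\theta_{r,\mu}(\tau_{\A_1})}$, I would use the standard expansion $\theta[\mu';1/2](w) = e^{\pi i\mu'}\sum_n(-1)^n e^{\pi i(n+\mu')^2 w}$, the quasi-periodicity $\theta[\mu'+k;\nu'] = e^{2\pi i k\nu'}\theta[\mu';\nu']$ for $k\in\ZZ$, and the complex-conjugation identity $\overline{\theta_{r,\mu}(\tau_{\A_1})} = \theta_{r,\mu}(-\overline{\tau_{\A_1}})$ (valid since the coefficients in $\theta_{r,\mu}$ are real). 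The mismatch between the characteristics $a\mu+ar/D$ and $ar/D-\mu$ (and between $\mu+r/D$ and $r/D-\mu$) produces a global phase which, after careful bookkeeping, simplifies to $e^{\pi i(a-1)/6}$.

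On the other side, the RHS of Lemma \ref{sum_rD} becomes a sum over $(m,n)\in\ZZ^2$ of $e^{\pi i m + 2\pi i nD\mu + \pi i mnD/a}$ times a real Gaussian whose quadratic form in $(m,n)$ I would write out explicitly. Using the congruence $b^2\equiv -3\pmod{4D^2a^2a_1}$ to conclude $(b^2+3)/(4a^2a_1)\in\ZZ$, after scaling by $a$ this Gaussian form has integer discriminant $-3D^2$; an integer change of variables $(m,n)\mapsto(M,N)$ (coming from a comparison of $\ZZ$-bases of $\A^2\A_1$ and $\OO_K$) transforms it to the norm form $M^2-MN+N^2$ on $\OO_K$. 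The $\mu$-dependent linear factor $e^{2\pi i nD\mu}$ then selects the correct combination: for $\mu=1/2$ the sum is $\Theta_K(D\tau_\A/3)$ directly, while for $\mu=1/6$ averaging over cosets of an index-$3$ sublattice assembles the combination $\tfrac{3}{2}\Theta_K(D\tau_\A) - \tfrac{1}{2}\Theta_K(D\tau_\A/3) = \Theta_{1/6}(D\tau_\A)$.

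The main obstacle will be executing the last step carefully: exhibiting the integer change of variables and verifying that the $\mu$-dependent linear phase correctly picks out the precise combination $\Theta_\mu$ in each of the two cases. In particular the phase $e^{\pi i(a-1)/6}$, which is the most delicate feature of the statement, can only be recovered by tracking together the mismatched theta characteristics from Step~2 and the shift introduced by the change of variables in Step~3.
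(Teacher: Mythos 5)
Your proposal is correct and follows essentially the same route as the paper: apply Lemma \ref{sum_rD} with $\nu=1/2$ and $z=\frac{-b+\sqrt{-3}}{2Daa_1}$, track the characteristic mismatches to produce the phase $e^{\pi i(a-1)/6}$, and identify the bilateral sum with $\Theta_\mu(D\tau_\A)$ via equivalence of the resulting integral form with the norm form, splitting on $n \bmod 3$ to assemble $\tfrac{3}{2}\Theta_K-\tfrac{1}{2}\Theta_K(\cdot/3)$ when $\mu=1/6$. One normalization to fix in execution: the relevant primitive form is $\bigl|m\tfrac{b+\sqrt{-3}}{2}+Daa_1n\bigr|^2/(Daa_1)$, which has discriminant $-3$ (not $-3D^2$) and is therefore directly $\SL_2(\ZZ)$-equivalent to $M^2-MN+N^2$, evaluated at $D\tau_{\A}/3$.
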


{\it Proof.} We apply Lemma \ref{sum_rD} for $\mu=-1/6$ and $\nu=1/2$, $D$ odd, $z=\frac{-b+\sqrt{-3}}{2Daa_1}$. It is easy to see on the LHS of the equation that we have $\theta\left[\begin{smallmatrix}-1/6+\frac{r}{D}\\ -a/2 \end{smallmatrix}\right]\left(z\right)
=
e^{-a\pi ir/D} e^{a\pi i/6}\theta_{r, \mu}(z),
$ and, as $a\equiv 1  (\text{mod}~ 6)$, also $\theta\left[\begin{smallmatrix} -a/6+\frac{ar}{D} \\ 1/2 \end{smallmatrix}\right]\left(z\right)=e^{\pi i ar/D} e^{-\pi i/6} \theta_{ar, \mu}(z)$. Moreover, since $D\equiv 1  (\text{mod}~ 6)$ we simplify the term $e^{-2\pi i nD/6}=e^{-2\pi i n/6}$ and  we can also compute $\ds \frac{\sqrt{2ay}}{\sqrt{D}}=\frac{\sqrt[4]{3}}{D\sqrt{a_1}}$. We get:
\begin{equation}\label{part_left}
\frac{\sqrt[4]{3}}{D\sqrt{a_1}e^{\pi i (a-1)/6} }\sum_{r\in \ZZ/D\ZZ} \theta_{ar, \mu}\left(\tau_{\A^2\A_1}\right)\overline{\theta_{r, \mu}\left(\tau_{\A_1}\right)}
=
\sum_{m,n\in \ZZ} e^{2\pi i (m/2-n/6)}e^{\pi  (mni-\frac{|nDaa_1-m\frac{-b+\sqrt{-3}}{2}|^2}{Daa_1\sqrt{3}})\frac{D}{a}}.
\end{equation}

Now we only have to show that the RHS equals $\Theta_{\mu}(D\tau_{\A})$. We claim:
\[
\ds e^{2\pi i (m/2+n/2)}e^{\pi  (mni-\frac{|naa_1D-m\frac{-b+\sqrt{-3}}{2}|^2}{aa_1D\sqrt{3}})\frac{D}{a}}=e^{2\pi i  \frac{|naa_1D-m\frac{-b+\sqrt{-3}}{2}|^2}{aa_1D}D\frac{-b+\sqrt{-3}}{6a}}.
\] 

Since the absolute values of the two sides already agree, we only need to show that the arguments agree as well, meaning $ 2\pi i \left(\frac{m}{2}+\frac{n}{2} +\frac{Dmn}{2a}\right) \equiv -2\pi i  \frac{|naa_1D-m\frac{-b+\sqrt{-3}}{2}|^2}{aa_1D}D\frac{b}{6a} \mod 2\pi i \ZZ.$ This is equivalent to showing that $\left(\frac{m^2}{2}+\frac{n^2}{2}\right) - \left(\frac{m^2}{2}Db\frac{(b^2+3)}{12a^2a_1}-Dmn\frac{(b^2+3)}{6a}+D\frac{n^2}{2}\frac{b}{3}\right)\in  \ZZ$,  and this follows easily from the conditions on $a, a_1, b$ and $D$.

Finally, we claim that $\ds \Theta_{\mu}(z)=\sum_{m,n \in \ZZ} e^{2\pi in(\mu+1/2)}e^{2\pi i\frac{|m\cdot \frac{b+\sqrt{-3}}{2}+naa_1|^2}{aa_1}z}$, which would finish the proof. This is immediate for $\mu=1/2$. For $\mu=1/6$ denote $\ds E_{*, k}(z)=\sum\limits_{\substack{m,n \in \ZZ\\ n\equiv k (3)}} e^{2\pi in/3}e^{2\pi i\frac{|m\cdot \frac{b+\sqrt{-3}}{2}+naa_1|^2}{aa_1} z}.$ Then we can write 
\[
\sum_{m,n \in \ZZ} e^{2\pi in/3}e^{2\pi i\frac{|m\cdot \frac{b+\sqrt{-3}}{2}+naa_1|^2}{aa_1}z}
=
E_{*, 0}(z)+\omega E_{*, 1}(z)+\omega^2 E_{*, 2}(z).
\] 
Note that $E_{*, 0}(z)=\Theta_K(3z)$ and $E_{*, 1}=E_{*, 2}$, as we can change $n\ra -n, m \ra -m$ in the Fourier expansion. Thus we get on the RHS the term $\Theta(3z)+(\omega+\omega^2)E_{*, 1}(z)=\Theta(3z)-E_{*, 1}(z)$. Furthermore $\Theta(z)=E_{*, 1}(z)+E_{*, 2}(z)+E_{*, 0}(z)$, thus we get $E_{*, 1}(z)=\frac{1}{2}(\Theta(z)-\Theta(3z)).$ Plugging in $E_{*, 1}(z)$ above we get the result of the proposition.

\bigskip
 A particular case of Lemma \ref{fact_r} is for $D=1$. As $\Theta\left(\tau_{A}/3\right)=0$ from Lemma \ref{theta_03} from the Appendix, we get:

\begin{cor}\label{theta_0} For $b^2\equiv -3\mod 12a^2a_1$ and $\A, \A_1$ as above, we have 
\[\Theta_{K}\left(\tau_{\A}\right)
=
\frac{2}{3}\frac{\sqrt[4]{3}}{\sqrt{a_1}}e^{\pi i (a-1)\frac{1}{6} } \theta_{0}\left(\tau_{\A^2\A_1}\right)\overline{\theta_0\left(\tau_{\A_1}\right)}.
\]
\end{cor}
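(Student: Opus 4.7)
The plan is to specialize Proposition \ref{fact_r} to the case $D=1$ with $\mu = 1/6$, and then use the vanishing $\Theta_K(\tau_\A/3) = 0$ (from Lemma \ref{theta_03} in the Appendix) to replace the auxiliary theta $\Theta_{1/6}$ by $\Theta_K$ on the left-hand side.

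First I would set $D=1$ in the preceding proposition. The sum $\sum_{r\in \ZZ/D\ZZ}$ then collapses to a single term $r = 0$, and the prefactor $\tfrac{\sqrt[4]{3}\,e^{\pi i(a-1)/6}}{D\sqrt{a_1}}$ becomes $\tfrac{\sqrt[4]{3}\,e^{\pi i(a-1)/6}}{\sqrt{a_1}}$. Since $\theta_0 = \theta_{0,1/6}$ by definition, the right-hand side already assumes the shape appearing in the corollary (up to the factor $2/3$). The hypothesis of Proposition \ref{fact_r} would require only $b^2 \equiv -3 \pmod{4 a^2 a_1}$; the stronger congruence $b^2 \equiv -3 \pmod{12 a^2 a_1}$ hypothesized in the corollary is exactly what is needed to apply Lemma \ref{theta_03} to the CM point $\tau_{\A}/3 = \frac{-b+\sqrt{-3}}{6a}$.

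Next, on the left I would unfold the definition $\Theta_{1/6}(z) = \tfrac{3}{2}\Theta_K(z) - \tfrac{1}{2}\Theta_K(z/3)$. Invoking Lemma \ref{theta_03} with the stronger congruence above yields $\Theta_K(\tau_\A/3) = 0$, so $\Theta_{1/6}(\tau_\A) = \tfrac{3}{2}\Theta_K(\tau_\A)$. Multiplying both sides of the specialized identity by $2/3$ then gives exactly
\[
\Theta_K(\tau_\A) = \frac{2}{3}\frac{\sqrt[4]{3}}{\sqrt{a_1}} e^{\pi i(a-1)/6}\,\theta_0(\tau_{\A^2\A_1})\,\overline{\theta_0(\tau_{\A_1})},
\]
as claimed.

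There is no real obstacle here; the deduction is essentially a one-line specialization once Proposition \ref{fact_r} is in hand. The only nontrivial ingredient is the vanishing $\Theta_K(\tau_\A/3) = 0$, whose verification — and the reason for the strengthened modulus $12 a^2 a_1$ in the hypothesis — is postponed to the Appendix.
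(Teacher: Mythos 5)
Your proposal is correct and is exactly the paper's argument: specialize Proposition \ref{fact_r} to $D=1$, $\mu=1/6$, and use $\Theta_K(\tau_{\A}/3)=0$ from Lemma \ref{theta_03} to reduce $\Theta_{1/6}(\tau_{\A})$ to $\tfrac{3}{2}\Theta_K(\tau_{\A})$. Your remark that the strengthened modulus $12a^2a_1$ is what licenses the appeal to Lemma \ref{theta_03} is also consistent with the paper.
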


\bigskip

Let $\ds f_{r, \mu}(z)=\frac{\theta_{r, \mu}(z)}{\theta_0(z)}$. Taking the ratios of the theta functions in Proposition \ref{fact_r} and Corollary \ref{theta_0} we get:

\begin{cor}\label{ratio} Under the same conditions as above, we have:
\begin{equation}
\frac{\Theta_{\mu}(D\tau_{\A})}{\Theta\left(\tau_{\A}\right)}
=
\frac{3/2}{D}\sum\limits_{r\in \ZZ/D\ZZ}
f_{ar, \mu}\left(\tau_{\A^2\A_1}\right) \overline{f_{r, \mu}\left(\tau_{\A_1}\right)}.
\end{equation}
\end{cor}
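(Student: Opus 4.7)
The plan is to obtain this identity simply by dividing the two identities just established: Proposition \ref{fact_r} expresses $\Theta_\mu(D\tau_\A)$ as a sum of products of weight $1/2$ theta functions, and Corollary \ref{theta_0} (the special case $D=1$, where only the $\mu=1/6$ piece survives) expresses $\Theta_K(\tau_\A)$ as a single such product. Both hold under the same hypothesis $b^2 \equiv -3 \pmod{12a^2a_1}$ (so that the stronger divisibility $b^2\equiv -3 \pmod{12D^2 a^2 a_1}$ implies the $D=1$ version in particular), and both carry the same prefactors $\tfrac{\sqrt[4]{3}}{\sqrt{a_1}} e^{\pi i(a-1)/6}$ up to the extra $\tfrac{1}{D}$ in the numerator and the $\tfrac{2}{3}$ in the denominator.

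Concretely, I would form the quotient
\[
\frac{\Theta_\mu(D\tau_\A)}{\Theta_K(\tau_\A)}
= \frac{\dfrac{\sqrt[4]{3}\,e^{\pi i(a-1)/6}}{D\sqrt{a_1}}\displaystyle\sum_{r\in \ZZ/D\ZZ}\theta_{ar,\mu}(\tau_{\A^2\A_1})\,\overline{\theta_{r,\mu}(\tau_{\A_1})}}{\dfrac{2}{3}\dfrac{\sqrt[4]{3}}{\sqrt{a_1}}e^{\pi i(a-1)/6}\,\theta_0(\tau_{\A^2\A_1})\,\overline{\theta_0(\tau_{\A_1})}},
\]
observe that $\sqrt[4]{3}$, $\sqrt{a_1}$, and $e^{\pi i(a-1)/6}$ all cancel, and that the constant $\tfrac{1}{D}\big/\tfrac{2}{3} = \tfrac{3/2}{D}$ comes out front. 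Since $\theta_0(\tau_{\A^2\A_1})\,\overline{\theta_0(\tau_{\A_1})}$ is independent of $r$, I can push it inside the sum and split each summand as a product of two ratios. Finally, using that $\overline{\theta_{r,\mu}(\tau_{\A_1})}/\overline{\theta_0(\tau_{\A_1})} = \overline{f_{r,\mu}(\tau_{\A_1})}$, I obtain exactly $\tfrac{3/2}{D}\sum_{r} f_{ar,\mu}(\tau_{\A^2\A_1})\,\overline{f_{r,\mu}(\tau_{\A_1})}$.

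The only thing requiring a brief comment is nonvanishing of $\theta_0$ at these CM points, which is needed to make the division meaningful and to define $f_{r,\mu}$ in the first place; this is a standard fact about the eta function (recall $\theta_0(z)=\eta(z/3)$), so I expect no real obstacle here. Overall, once Proposition \ref{fact_r} and Corollary \ref{theta_0} are in hand, this corollary is essentially a bookkeeping step, and the hard work has already been done in establishing the factorization formula.
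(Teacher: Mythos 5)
Your proposal is correct and is exactly the paper's argument: the corollary is obtained by dividing the identity of Proposition \ref{fact_r} by that of Corollary \ref{theta_0}, cancelling the common prefactors, and regrouping the summands as products of the ratios $f_{ar,\mu}$ and $\overline{f_{r,\mu}}$. Your remark on the nonvanishing of $\theta_0=\eta(\cdot/3)$ on the upper half plane is the right (and only) point needing justification, and it is standard.
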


\bigskip

We are interested in the Galois conjugates of $f_{r, \mu}(\tau)$ for $\tau=\frac{-b+\sqrt{-3}}{2}$ such that $b^2\equiv -3 \mod 12D^2$. For $\A=[a, \frac{-b+\sqrt{-3}}{2}]_{\ZZ}$ a primitive ideal and $k_{\A}\equiv 1\mod 3$ its generator, we write $k_{\A}$ in the form $k_{\A}=n_{a}a+m_{a}\frac{-b+\sqrt{-3}}{2}$ with $3|m_a$ and $n_a\equiv 1(3)$. In Section \ref{shimura_r} we will show in Proposition \ref{Galois_theta_r} that
\[
f_{r, \mu}(\tau)^{\sigma_{\A}^{-1}}=f_{n'_a r, \mu}(\tau),
\]
where $\sigma_{\A}$ is the Galois action corresponding to the ideal $\A$ via the Artin map and $n'_a\equiv n_a (3D)$ with $n'_a$ odd. We also show
in Lemma \ref{galois_conj} in the same section that we have we have $
(f_{r, \mu}(\tau))^{\sigma_{\A}^{-1}}=f_{r, \mu}(\tau_{\A})$, thus we get the following lemma:

\begin{Lem}\label{theta_galois} For an ideal $\A=[a, \frac{-b+\sqrt{-3}}{2}]$ generated by $n_a a+m_a\frac{-b+\sqrt{-3}}{2}$ such that $m_a \equiv 0\mod 3$, $n_a\equiv 1 \mod 3$ and $b^2\equiv -3 \mod 12aD^2$, we have: 
\[
f_{r, \mu}(\tau)^{\sigma_{\A}^{-1}}
=
f_{r, \mu}\left(\tau_{\A}\right)
=
f_{n'_ar, \mu}(\tau)
\]
for $n'_a\equiv n_a (3D)$ with $n'_a$ odd.
\end{Lem}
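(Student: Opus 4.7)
The plan is to observe that the displayed triple equality is a direct consequence of the two Shimura-reciprocity identities that are announced in the sentences immediately preceding the statement: the ``geometric'' identity $f_{r,\mu}(\tau)^{\sigma_{\A}^{-1}}=f_{r,\mu}(\tau_{\A})$ (Lemma~\ref{galois_conj}) and the ``adelic'' identity $f_{r,\mu}(\tau)^{\sigma_{\A}^{-1}}=f_{n'_a r,\mu}(\tau)$ (Proposition~\ref{Galois_theta_r}). Chaining them gives the lemma. So the real work is sketching how I would establish each of these two components.

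For the first identity, I would view $f_{r,\mu}(z)=\theta_{r,\mu}(z)/\theta_0(z)$ as a weight-zero modular function on a congruence cover of the modular curve whose level is a multiple of $12D^2$, so that the characteristics $r/D-\mu$ and $1/2$ are well defined modulo $\ZZ$. The CM point $\tau=\frac{-b+\sqrt{-3}}{2}$ corresponds to the lattice $\OO_K$ together with a level structure determined by the congruence $b^2\equiv -3\pmod{12D^2}$. Shimura's reciprocity law in its ``main theorem of CM'' formulation then says that $\sigma_{\A}^{-1}$ acts on $f_{r,\mu}(\tau)$ by replacing the lattice $\OO_K$ with $\A$; after rescaling by the generator $k_{\A}$ this produces the CM point $\tau_{\A}$, and since $f_{r,\mu}$ has weight zero the rescaling is irrelevant. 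The level structure is preserved precisely because $b^2\equiv -3\pmod{12aD^2}$ and $3\mid m_a$.

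For the second identity, I would use Shimura reciprocity in its adelic form. The Artin symbol $\sigma_{\A}^{-1}$ corresponds to an adelic matrix $g_{\A}\in\GL_2(\AAA_{\QQ,f})$ built from the basis $(a,a\tau_{\A})$ of $\A$; with the normalization $k_{\A}=n_a a + m_a\frac{-b+\sqrt{-3}}{2}$, $n_a\equiv 1\pmod 3$, $3\mid m_a$, this matrix acts on the finite component of the weight-$1/2$ theta data by sending the characteristic $r/D$ to $n_a r/D$ modulo~$\ZZ$. The odd lift $n'_a\equiv n_a\pmod{3D}$ is dictated by the parity of the $\mu$-characteristic: the sign $(-1)^n$ appearing in $\theta_{r,\mu}$ comes from the second theta characteristic $1/2$, and transporting it correctly requires the scalar that relabels $r$ to itself be odd. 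Because $\theta_{r,\mu}$ and $\theta_0$ share the same weight-$1/2$ automorphy factor under the $\SL_2$-action that fixes $\tau$, all multipliers cancel in the ratio $f_{r,\mu}$, leaving only the index shift $r\mapsto n'_a r$.

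The main obstacle is the bookkeeping in the second identity: one must check that the full weight-$1/2$ multiplier system — including the eighth roots of unity from the general theta transformation law and the signs from the half-characteristic — is genuinely identical for $\theta_{r,\mu}$ and $\theta_0$, so that cancellation in the ratio is exact rather than merely projective. The hypotheses $b^2\equiv -3\pmod{12aD^2}$, $3\mid m_a$, $n_a\equiv 1\pmod 3$, and the odd choice of $n'_a$, are exactly what is needed to force the various quadratic exponents produced by the theta transformation law to lie in $\ZZ$ rather than $\frac12\ZZ$; these congruences must be tracked at every step, and for $\mu=1/6$ in particular one has to handle a slightly different characteristic than for $\mu=1/2$, so the argument must be run in both cases.
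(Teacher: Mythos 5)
Your proposal is correct and follows the paper's own route: the lemma is obtained exactly by chaining Lemma~\ref{galois_conj} (giving $f_{r,\mu}(\tau)^{\sigma_{\A}^{-1}}=f_{r,\mu}(\tau_{\A})$) with Proposition~\ref{Galois_theta_r} (giving the index shift $r\mapsto n'_a r$), both established in Section~\ref{Shimura} via adelic Shimura reciprocity, the decomposition of $g_\tau(\A)$ into an $\SL_2$ part and a diagonal part acting trivially on rational Fourier coefficients, and the explicit weight-$1/2$ transformation law of Lemma~\ref{transformation_r}. Your flagged ``main obstacle'' --- verifying that the multiplier system genuinely cancels in the ratio under the stated congruences, with separate care for $\mu=1/2$ versus $\mu=1/6$ --- is precisely the bookkeeping the paper carries out (e.g.\ the observation that $9\mid sc'$ kills the extra cube-root-of-unity factor).
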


Using the lemma above, we can rewrite Corollary \ref{ratio}:

\begin{cor}\label{ratio_n_a} Under the same conditions as above, for $\A=(n_aa+m_a\frac{-b+\sqrt{-3}}{2})$, $\A_1=(n_{a_1}a_1+m_{a_1}\frac{-b+\sqrt{-3}}{2})$ with $b^2\equiv -3 \mod 12a^2a_1D^2$,  we have:
\[
\frac{\Theta_{\mu}\left(D\tau_{\A}\right)}{\Theta\left(\tau_{\A}\right)}
=
\frac{3/2}{D}\sum\limits_{r\in \ZZ/D\ZZ}
f_{n_a'^2n'_{a_1}ar, \mu}\overline{
f_{n_{a'_1}r, \mu}\left(\tau\right)},
\]
where $n'_a\equiv n_a (3D)$, $n'_{a_1}\equiv n_{a_1} (3D)$ and $n'_a, n'_{a_1}$ odd.

\end{cor}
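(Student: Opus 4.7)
The plan is to derive Corollary \ref{ratio_n_a} directly from Corollary \ref{ratio} and Lemma \ref{theta_galois}. Starting from the identity
\[
\frac{\Theta_\mu(D\tau_\A)}{\Theta(\tau_\A)} = \frac{3/2}{D}\sum_{r\in\ZZ/D\ZZ} f_{ar,\mu}(\tau_{\A^2\A_1})\,\overline{f_{r,\mu}(\tau_{\A_1})}
\]
of Corollary \ref{ratio}, the goal is to transport the two factors on the right-hand side from their respective CM points $\tau_{\A^2\A_1}$ and $\tau_{\A_1}$ to the common CM point $\tau=\frac{-b+\sqrt{-3}}{2}$, using the Galois-conjugation formula of Lemma \ref{theta_galois}.

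For the second factor, a direct application of Lemma \ref{theta_galois} with the ideal $\A_1$ gives $f_{r,\mu}(\tau_{\A_1}) = f_{n'_{a_1}r,\mu}(\tau)$. For the first factor, I would apply Lemma \ref{theta_galois} with the ideal $\A^2\A_1$, which under the joint hypothesis $b^2\equiv -3\mod 12 a^2 a_1 D^2$ is a primitive ideal of norm $a^2 a_1$ with $\ZZ$-basis $\{a^2a_1,\frac{-b+\sqrt{-3}}{2}\}$. The key step is to identify the odd integer modulo $3D$ that plays the role of $n'$ for $\A^2\A_1$; the natural candidate is $(n'_a)^2 n'_{a_1}$.

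To verify this, I would expand the generator $k_{\A^2\A_1}=k_\A^2 k_{\A_1}$ in the basis $\{1,\tau\}$ using $\tau^2=-b\tau-(b^2+3)/4$. Every cross term that could obstruct the desired congruence carries either a factor of $m_a m_{a_1}$ (hence is divisible by $9$, since $3\mid m_a,m_{a_1}$) or a factor of $(b^2+3)/4$ (hence is divisible by $3a^2a_1 D^2$, by the hypothesis on $b$). Dividing the integer part of $k_{\A^2\A_1}$ by $\Nm(\A^2\A_1)=a^2a_1$ then leaves $n_a^2 n_{a_1}$ modulo $3D$, so an odd representative is $(n'_a)^2 n'_{a_1}$. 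Lemma \ref{theta_galois} yields
\[
f_{ar,\mu}(\tau_{\A^2\A_1}) = f_{(n'_a)^2 n'_{a_1}\, ar,\mu}(\tau).
\]

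Substituting both identities back into Corollary \ref{ratio} produces the stated formula. The only non-routine input is the congruence calculation above, and this is the principal obstacle: it requires careful tracking of divisibility by $3D$ through the product expansion, exploiting the very strong divisibility $12a^2a_1 D^2 \mid b^2+3$ that is packaged into the hypothesis.
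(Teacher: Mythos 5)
Your proposal is correct and follows essentially the same route as the paper, which gives no written proof beyond the remark that Corollary \ref{ratio_n_a} is obtained by applying Lemma \ref{theta_galois} to the two factors in Corollary \ref{ratio}; the congruence $N\equiv n_a^2 n_{a_1}\pmod{3D}$ for the generator $k_{\A}^2k_{\A_1}$ of $\A^2\A_1$ that you work out is precisely the suppressed detail. (The computation is in fact slightly cleaner than you describe: writing $k_{\A}^2k_{\A_1}=Na^2a_1+M\tau$, every correction to $N-n_a^2n_{a_1}$ carries the factor $(b^2+3)/(4a^2a_1)$, which is divisible by $3D^2$ by the hypothesis on $b$, while the divisibility $3\mid m_am_{a_1}$ is needed only to check that $3\mid M$, i.e.\ that the generator is in the form required by Lemma \ref{theta_galois}.)
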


\subsection{$S_D$ as an absolute value}

In the following we will use Corollary \ref{ratio_n_a} for a choice of representative ideals for the classes of the ring class group $\Cl(\OO_{3D})$. We show first:

\begin{Prop}\label{absolute_value} For $\tau=\frac{-b+\sqrt{-3}}{2}$ such that $b^2\equiv -3\mod 12D^2$ and $\pi\equiv 1 \mod 3$ an element of norm of $D$ such that $(\pi)^2$ divides $(\frac{-b+\sqrt{-3}}{2})$, we have:
\[
S_{\mu}
=
D^{-2/3}|\sum\limits_{\substack{s\in (\ZZ/D\ZZ)^{\times} \\ s\equiv 1 (6)}}f_{s, \mu}\left(\tau\right) 
\chi_{\pi}(s) |^2.
\]
\end{Prop}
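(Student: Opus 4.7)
The strategy is to substitute the factorization from Corollary~\ref{ratio_n_a} into
\[
S_{\mu}=\frac{2}{3}\sum_{[\A]\in \Cl(\OO_{3D})}\frac{\Theta_{\mu}(D\tau_{\A})}{\Theta_K(\tau_{\A})}\chi_{D}(\A)\,D^{1/3}
\]
and to reorganise the resulting double sum into a squared absolute value. The first step is to pick, for each representative $\A=[a,\tau]_{\ZZ}$, an auxiliary ideal $\A_{1}=[a_{1},\tau]_{\ZZ}$ in Corollary~\ref{ratio_n_a} in a consistent way; because $(\pi)^{2}$ divides $(\tfrac{-b+\sqrt{-3}}{2})$, one can take $\A_{1}$ in the class of $\overline{\A}$, keeping the congruence $b^{2}\equiv -3\pmod{12a^{2}a_{1}D^{2}}$ automatic and making $\A^{2}\A_{1}$ principal up to a controlled factor. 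The prefactor $D^{-2/3}$ on the right-hand side will arise from the $(3/2)/D$ in the factorization, the $D^{1/3}$ inside the sum, and the $2/3$ out front.

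Next, using that $D=\pi\bar{\pi}$, I write $\chi_{D}(\A)=\chi_{\pi}(\A)\chi_{\bar{\pi}}(\A)$ and $D^{1/3}=\pi^{1/3}\bar{\pi}^{1/3}$. By Lemma~\ref{GaloisD}, $\pi^{1/3}\chi_{\pi}(\A)$ and $\bar{\pi}^{1/3}\chi_{\bar{\pi}}(\A)$ are complex conjugates of each other (up to a cube root of unity that I can absorb into the choice of $\pi^{1/3}$). Combining with Corollary~\ref{ratio_n_a}, the $[\A]$-summand becomes
\[
\frac{3/2}{D}\sum_{r\in\ZZ/D\ZZ}\Bigl(\pi^{1/3}f_{n_{a}'^{2}n_{a_{1}}'ar,\mu}(\tau)\,\chi_{\pi}(\A)\Bigr)\,\overline{\Bigl(\pi^{1/3}f_{n_{a_{1}}'r,\mu}(\tau)\,\chi_{\pi}(\A_1)\Bigr)},
\]
so the expression is already in the shape ``something $\cdot\,\overline{\text{something}}$'' indexed by the two free parameters.

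The third step is to change variables $(s_{1},s_{2}):=(n_{a}'^{2}n_{a_{1}}'ar\bmod D,\ n_{a_{1}}'r\bmod D)$ and swap the order of summation. Since $f_{s,\mu}$ vanishes or collapses whenever $\gcd(s,D)>1$ (the non-unit residues pair up with cancelling signs coming from $(-1)^{n}$), only $s_{i}\in(\ZZ/D\ZZ)^{\times}$ with $s_{i}\equiv 1\pmod{6}$ survive. Using cubic reciprocity to translate $\chi_{\pi}(\A)$ on ideals into $\chi_{\pi}(s_{1}),\overline{\chi_{\pi}(s_{2})}$ on integers, the double sum factors as
\[
\Bigl(\sum_{s_{1}}f_{s_{1},\mu}(\tau)\chi_{\pi}(s_{1})\Bigr)\,\overline{\Bigl(\sum_{s_{2}}f_{s_{2},\mu}(\tau)\chi_{\pi}(s_{2})\Bigr)},
\]
and a final bookkeeping on powers of $\pi,\bar{\pi}$, and $D$ produces the prefactor $D^{-2/3}$.

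The main obstacle is the third step: establishing that the map $([\A],r)\mapsto(s_{1},s_{2})$ is a bijection onto the correct index set, and that the character data on the ideal side matches $\chi_{\pi}(s_{1})\overline{\chi_{\pi}(s_{2})}$ on the integer side. This is essentially a cubic reciprocity computation carried out uniformly over the ring class group, and it is where the hypothesis that $D$ is a product of split primes and that $(\pi)^{2}\mid(\tau)$ is really used. Once this bijection with matching characters is in hand, the rest is algebraic rearrangement.
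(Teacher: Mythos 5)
Your overall strategy (substitute Corollary \ref{ratio_n_a} into the definition of $S_{\mu}$, identify $\chi_D(\A_s)$ with $\chi_{\pi}(s)$, and factor the resulting double sum) is the same as the paper's, but the step you flag as the main obstacle is resolved incorrectly. You claim that the terms with $\gcd(s_i,D)>1$ disappear because ``$f_{s,\mu}$ vanishes or collapses whenever $\gcd(s,D)>1$ (the non-unit residues pair up with cancelling signs coming from $(-1)^n$).'' This is false: $f_{r,\mu}(\tau)$ does not vanish for $\gcd(r,D)>1$, and the sign pairing $\theta_{r,1/2}=-\theta_{2D-r,1/2}$ is a separate phenomenon used only to prove $S_{1/2}=0$; it has no analogue for $\mu=1/6$. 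The actual reason the non-invertible residues drop out is global, not termwise: for each proper divisor $m=D/D'$ of $D$, the partial sum over $r$ with $m\mid r$ collapses (via Corollary \ref{ratio_n_a} applied with $D'$ in place of $D$) to a sum over the class group of the ratio $\Theta_{\mu}(D'\tau_{\A_s})/\Theta(\tau_{\A_s})$ weighted by the \emph{nontrivial} cubic character $\chi_m$, and orthogonality of that character over the group forces the sum to vanish. One then assembles these vanishing pieces by inclusion--exclusion over the prime divisors of $D$. Without this argument your proof does not go through.

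Two smaller discrepancies: the paper takes the auxiliary ideal $\A_1=(1)$ (so that $n'_{a_1}=1$ and the factorization reads $\frac{3/2}{D}\sum_r f_{sr,\mu}(\tau)\overline{f_{r,\mu}(\tau)}$), rather than an ideal in the class of $\overline{\A}$; your choice would not make $\A^2\A_1$ principal in $\Cl(\OO_{3D})$ and complicates the bookkeeping. Also, the identity you invoke is $\overline{\chi_{\pi}(\A)}=\chi_{\overline{\pi}}(\overline{\A})$, with the \emph{conjugate} ideal on the right, so $\pi^{1/3}\chi_{\pi}(\A)$ and $\overline{\pi}^{1/3}\chi_{\overline{\pi}}(\A)$ are not complex conjugates in general. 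The paper sidesteps this by computing directly $\chi_D(\A_s)=\chi_{\pi}(\alpha_s^2/|\alpha_s|^2)=\chi_{\pi}(s)$ from the congruence $\alpha_s\equiv s\bmod \pi$, which is where the hypothesis $(\pi)^2\mid(\tau)$ enters.
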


{\it Proof.} The structure of the ring class group of conductor $3D$ for $D=\prod\limits_{p_i\equiv 1\mod 3} p_i$ is given by  $\Cl(\OO_{3D})\cong (\ZZ/D\ZZ)^{\times}$ (see for example Cox \cite{Cox}). 
We will choose as representatives for the classes of $\Cl(\OO_{3D})$ ideals $\A_s$ such that $\Nm\A_s \equiv s\mod D$. For $b$ fixed, $b^2\equiv -3\mod 12D$, we take:
\[
\A_s=(n_s a_s + m_s\frac{-b+\sqrt{-3}}{2}), 
\]
where $a_s=\Nm(\A_s)\equiv s\mod D$, $n_s\equiv 1\mod 3D$, $m_s\equiv 0 \mod 3$. Note that this gives us $m\equiv b^{-1}(s-1) \mod 3D$. Moreover, it is easy to check that the ideals $\A_s$ for $s\in(\ZZ/D\ZZ)^{\times}$ are in different classes in $\Cl(\OO_{3D})$.

We take as before $\pi$ the element of norm $D$ such that $(\pi)^2$ divides the ideal $(\tau)=(\frac{-b+\sqrt{-3}}{2})$.  Then note that $\chi_{D}(\alpha)=\chi_{\pi}(\frac{\alpha}{\overline{\alpha}})=\chi_{\pi}(\frac{\alpha^2}{|\alpha|^2})$. As $b\equiv \sqrt{-3} \mod \pi$, we get $\alpha_s=n_s a_s + m_s\frac{-b+\sqrt{-3}}{2} \equiv s \mod \pi$ and thus $\chi_{\pi}((\alpha_s))=\chi_{\pi}(s^2/s)=\chi_{\pi}(s)$.

Taking representatives $s\in \ZZ/D\ZZ, s\equiv 1\mod 6$, we get $m_s\equiv 0\mod 6$ and $n_s\equiv 1\mod 6$. Summing up over $r\in \ZZ/D\ZZ$ with $r\equiv 1\mod 6$ and taking $\A_1=(1)$ in Corollary \ref{ratio_n_a}, we get:

\begin{equation}\label{sum_rs_0} 
\frac{\Theta_{\mu}\left(D\tau_{\A_s}\right)}
{\Theta\left(\tau_{\A_s}\right)}
=
\frac{3/2}{D}\sum\limits_{\substack{r\in \ZZ/D\ZZ \\ r\equiv 1 (6)}}f_{sr, \mu}\left(\tau\right)\overline{f_{r, \mu}\left(\tau\right)},
\end{equation}

Summing up for all $\{s\in (\ZZ/D\ZZ)^{\times}, s\equiv 1 (6)\}$ and rearranging the terms, we get:

\[
S_{\mu}
=
D^{-2/3}\sum_{\substack{s\in (\ZZ/D\ZZ)^{\times} \\ s\equiv 1 (6)}}\sum\limits_{\substack{r\in \ZZ/D\ZZ, \\ r\equiv 1(6)}} f_{sr, \mu}\left(\tau\right) \chi_{\pi}(rs)\cdot
\overline{f_{r, \mu}\left(\tau\right)\chi_{\pi}(r)}.
\]

Finally, we will further modify the sums on the RHS in order to sum up over $r\in (\ZZ/D\ZZ)^{\times}$ as well. In order to emphasize the dependence of $\theta_{r, \mu}$ on $D$ we will use the notation $f_{r/D}(z)=\frac{\theta_{r, \mu}(z)}{\theta_{0}(z)}= \frac{
\sum\limits_{n\in \ZZ}e^{\pi i \left(n+\frac{r}{D}-\mu\right)^2z}(-1)^n}{\theta_0(z)}$. Moreover, for $p_{i_1}\dots p_{i_k}|D$, denote:

\[
\ds S_{p_{i_1}\dots p_{i_k}}
=
\sum_{\substack{s\in (\ZZ/D\ZZ)^{\times} \\ s\equiv 1(6)}} \chi_{D}(\A_s)D^{-2/3}
\sum\limits_{\substack{r\in \ZZ/D\ZZ\\ r\equiv1 (6) \\ p_{i_1}\dots p_{i_k}|r}}
f_{sr/D} (\tau)
\overline{f_{r/D}(\tau)}.
\]

We claim that for $k\geq 1$ we have $S_{p_{i_1}\dots p_{i_k}}=0$. Note that we can rewrite 
\[
S_{\mu}=\sum_{p_i|D} S_{p_i}
-\sum_{p_ip_j|D}S_{p_ip_j}+\dots+(-1)^{n-1}S_{p_1, \dots, p_n}+\sum_{\substack{s\in (\ZZ/D\ZZ)^{\times} \\ s\equiv 1 (\text{mod}~ 6)}}\sum\limits_{\substack{r\in (\ZZ/D\ZZ)^{\times}, \\ r\equiv1  (\text{mod}~ 6)}} f_{sr/D}(\tau)\overline{f_{r/D}(\tau)}\chi_{D}(\A_s)D^{-2/3},
\]
thus showing $S_{p_{i_1}\dots p_{i_k}}=0$ for $k\geq 1$ proves our result.

To see that $S_{p_{i_1}\dots p_{i_k}}=0$, let $D'=D/(p_{i_1}\dots p_{i_k})$.  We recognize each of the inner sums $\ds \sum\limits_{\substack{r'\in \ZZ/D'\ZZ\\ r\equiv1 (\text{mod}~ 6)}}
f_{sr'/D'} (\tau)
\overline{f_{r'/D'}(\tau)}$
  of $S_{p_{i_1}\dots p_{i_k}}$ to be equal to $\ds \frac{D'}{3/2}\frac{\Theta_{\mu}\left(D'\tau_{\A_s}\right)}{\Theta\left(\tau_{\A_s}\right)}$ from (\ref{sum_rs_0}) for $D:=D'$.
  
 Denote $m=D/D'$. From the properties of the cubic character, we have $\chi_D=\chi_m \chi_{D'}$. Moreover, from our choice of ideals, we have $\ds\frac{\Theta_{\mu}\left(D'\tau_{\A_s}\right)}{\Theta\left(\tau_{\A_s}\right)}=\frac{\Theta_{\mu}\left(D'\tau_{\A_{s'}}\right)}{\Theta\left(\tau_{\A_{s'}}\right)}$ for $s\equiv s' \mod 3D'$, as $\A_s$ and $\A_{s'}$ are in the same class in $\Cl(\OO_{3D'})$. Then we can rewrite the sum as:
\[
S_{p_{i_1}\dots p_{i_k}}
=
\sum_{\substack{s'\in (\ZZ/D'\ZZ)^{\times},\\ s'\equiv 1 (\text{mod}~ 6)}}\ds \frac{D'}{3/2}\frac{\Theta_{\mu}\left(D'\tau_{\A_s}\right)}{\Theta\left(\tau_{\A_s}\right)}
\chi_{D'}(\A_s)
\sum_{\substack{s\in (\ZZ/D\ZZ)^{\times},\\ s, s' \equiv 1 (\text{mod}~ 6)\\s\equiv s' (\text{mod}~ D')} } 
\chi_m(\A_s).
\]

In the inner sum we are summing over $s$ modulo $m$ for all $s$ in $\ds (\ZZ/m\ZZ)^{\times}$. Moreover, $\chi_m(\A_s)$ is a nontrivial character as a function of $s$, as  $m^{1/3}\chi_m(\A_s)=(m^{1/3})^{\sigma_{\A_s}}=m^{1/3}$ for all $\A_s$ iff $m^{1/3}\in \QQ[\sqrt{-3}]$. As we are summing a non-trivial character over a group, the sum is $0$. This finishes the proof.

\bigskip

Using the above proposition now it is easy to see:

\begin{cor}\label{S_zero} $S_D=\frac{1}{3c_{3D}}S_{1/6}$ and $S_{1/2}=0$.
\end{cor}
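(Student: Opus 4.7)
The plan is to derive both assertions from two ingredients already in hand. From the definitions of $\Theta_{1/6}$ and $\Theta_{1/2}$ one has the trivial identity $\Theta_K(z)=\tfrac{2}{3}\Theta_{1/6}(z)+\tfrac{1}{3}\Theta_{1/2}(z)$. Substituting this into the formula of Theorem \ref{thm_1_theta} (after unfolding the trace as a sum over $[\A]\in\Cl(\OO_{3D})$) gives $3c_{3D}S_D=S_{1/6}+\tfrac{1}{2}S_{1/2}$. Hence the corollary reduces to the single claim $S_{1/2}=0$.

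For this I would apply Proposition \ref{absolute_value} with $\mu=1/2$ to rewrite $S_{1/2}=D^{-2/3}|\Sigma|^2$, where $\Sigma=\sum_{s\in(\ZZ/D\ZZ)^\times,\,s\equiv 1(6)} f_{s,1/2}(\tau)\chi_\pi(s)$. I would then exhibit a sign-reversing, fixed-point-free involution on the index set, which forces $\Sigma=0$. The involution I have in mind sends $s$ to $s^*:=2D-s$, with $s$ running through the natural lift $\{s\in(\ZZ/6D\ZZ)^\times:s\equiv 1\ (6)\}$, which is in bijection with $(\ZZ/D\ZZ)^\times$. Three verifications are needed: (a) $s^*\equiv 1\pmod 6$, which holds because $D\equiv 1\pmod 6$ (every prime divisor $p\equiv 1\pmod 3$ of $D$ is odd, hence $\equiv 1\pmod 6$); (b) $s^*\equiv -s\pmod D$, which is clear; (c) the involution has no fixed points, since $s=s^*$ would force $s\equiv D\pmod{3D}$, contradicting $\gcd(s,D)=1$.

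To see the sign reversal, I would read off from the series defining $\theta_{r,1/2}$ the two identities $\theta_{r+D,1/2}(z)=-\theta_{r,1/2}(z)$ (substitute $n\mapsto n-1$) and $\theta_{-r,1/2}(z)=-\theta_{r,1/2}(z)$ (substitute $n\mapsto -n$, then $n\mapsto n+1$). Combining them gives $\theta_{2D-s,1/2}=-\theta_{s,1/2}$, hence $f_{s^*,1/2}(\tau)=-f_{s,1/2}(\tau)$. For the cubic character one notes $\chi_\pi(-1)=(-1)^{(D-1)/3}=1$, since $D\equiv 1\pmod 6$ makes $(D-1)/3$ even, and so $\chi_\pi(s^*)=\chi_\pi(-s)=\chi_\pi(s)$. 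Multiplying the two pieces yields $f_{s^*,1/2}(\tau)\chi_\pi(s^*)=-f_{s,1/2}(\tau)\chi_\pi(s)$; pairing up the sum under the involution then gives $\Sigma=0$, whence $S_{1/2}=0$ and $S_D=S_{1/6}/(3c_{3D})$.

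The main thing one must be careful about is the choice of integer lift of $s$ used inside $\theta_{s,1/2}$: the function is antiperiodic with period $D$ and only periodic with period $2D$, so the sign of each summand is sensitive to the specific representative. Tracking these signs consistently across the shift $2D-s=(-s)+2D$ is the one place where a miscount would invalidate the whole argument; the rest is a short index manipulation.
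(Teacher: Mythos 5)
Your proposal is correct and follows essentially the same route as the paper: the identity $3c_{3D}S_D=S_{1/6}+\tfrac{1}{2}S_{1/2}$ reduces everything to $S_{1/2}=0$, which is obtained from Proposition \ref{absolute_value} together with the antisymmetry $\theta_{2D-s,1/2}=-\theta_{s,1/2}$ and the invariance $\chi_{\pi}(2D-s)=\chi_{\pi}(s)$ under the involution $s\mapsto 2D-s$. Your extra care about the integer lifts (the $D$-antiperiodicity versus $2D$-periodicity of $\theta_{r,1/2}$ in $r$) and the fixed-point check are sensible refinements of the same cancellation argument the paper uses.
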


{\it Proof.} As $3c_{3D}S_D=S_{1/6}+1/2S_{1/2}$, if $S_{1/2}=0$ we have $3S_Dc_{3D}=S_{1/6}$. Thus it is enough to show $S_{1/2}=0$. More precisely we will show that $R(z):=\ds \sum_{\substack{r\in (\ZZ/D\ZZ)^{\times}\\ r\equiv 1(6)}} \theta_{r, 1/2}(z)\chi_{\pi}(r)$ equals $0$ for any $z$, in particular for $z=\tau$. Since we showed that $S_{1/2}=D^{-2/3}|R(\tau) |^2$ in Proposition \ref{absolute_value}, we get $S_{1/2}=0$. 

To show $R(z)=0$, note that $\theta_{r, 1/2}(z)=-\theta_{2D-r, 1/2}(z)$, while $\chi_{\pi}(2D-r)=\chi_{\pi}(r)$. As both $r, 2D-r\equiv 1\mod 6$ and $D$ odd, the terms cancel each other out in the sum and we get $R(z)=0$.

\bigskip
Finally, from Proposition \ref{absolute_value} and Corollary \ref{S_zero} we get:

\begin{Prop}\label{absolute_value1} For $\tau=\frac{-b+\sqrt{-3}}{2}$ such that $b^2\equiv -3\mod 12D^2$, we have $S_{D}=\frac{1}{3c_{3D} }S_{1/6}$ and
\[
S_{D}
=
\frac{D^{-2/3}}{3c_{3D}}|\sum\limits_{\substack{s\in (\ZZ/D\ZZ)^{\times} \\ s\equiv 1 (6)}}f_{s, 1/6}\left(\tau\right) 
\chi_{\pi}(s)|^2.
\]
\end{Prop}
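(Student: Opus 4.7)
The plan is to observe that this proposition is essentially an immediate corollary of the two preceding results, Proposition \ref{absolute_value} and Corollary \ref{S_zero}, so the only work is a bookkeeping check.

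First I would invoke Corollary \ref{S_zero}, which asserts $S_{1/2} = 0$ and consequently $S_D = \tfrac{1}{3 c_{3D}} S_{1/6}$. This reduces the problem to producing the claimed absolute-value expression for $S_{1/6}$. For that, I would directly apply Proposition \ref{absolute_value} specialized to $\mu = 1/6$, which gives
\[
S_{1/6} = D^{-2/3}\left|\sum_{\substack{s \in (\ZZ/D\ZZ)^{\times} \\ s \equiv 1 \,(6)}} f_{s,1/6}(\tau)\,\chi_\pi(s)\right|^2.
\]
Dividing both sides by $3 c_{3D}$ and inserting into $S_D = \tfrac{1}{3c_{3D}} S_{1/6}$ yields the desired formula. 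The hypothesis $b^2 \equiv -3 \pmod{12 D^2}$ and the choice of $\pi \equiv 1 \pmod 3$ of norm $D$ with $(\pi)^2 \mid (\tfrac{-b+\sqrt{-3}}{2})$ are the same in both the hypothesis of Proposition \ref{absolute_value} and the statement here, so no additional compatibility needs to be checked.

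The only real content, already carried out in the proof of Proposition \ref{absolute_value}, is the factorization argument via Corollary \ref{ratio_n_a} together with the cancellation $S_{p_{i_1}\cdots p_{i_k}} = 0$ for $k \geq 1$, which comes from summing the nontrivial character $\chi_m$ over a subgroup of the class group. Since these have already been verified, the present statement requires no further obstacle to overcome; it is simply the combined form that will be convenient to quote in the sequel when expressing $S_D$ as the square of a single trace.
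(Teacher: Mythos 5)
Your proposal is correct and matches the paper exactly: the paper states Proposition \ref{absolute_value1} as an immediate consequence of Proposition \ref{absolute_value} (specialized to $\mu=1/6$) combined with Corollary \ref{S_zero}, with no additional argument. The bookkeeping you describe is all that is needed.
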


\subsection{$S_D$ as a square}
In the following we will rewrite Proposition \ref{absolute_value1} so that we get a square. Define $F_{r, \mu}(z)=f_{r, \mu}(3z)$ and take:
\[
\ds R_{D, \mu}(z)=\sum_{\substack{r \in (\ZZ/D\ZZ)^{\times} \\ r\equiv 1 (6)}} F_{r, \mu}(z)\chi_{\pi}(r).
\]
With this notation, we have showed in Proposition \ref{absolute_value} that $\ds S_{1/6}=|R_{D,1/6}(\tau/3)D^{-1/3}|^2.$ One can show that $R_{D, 1/6}^3(\tau/3)\in K$ and actually $R_{D, 1/6}(\tau/3)$ is really close to being an integer. We will show in this section the following:
\begin{Prop}\label{M_D} For $\sigma(D)$ the number of prime divisors of $D$, we have:
\[
S_D= \frac{(-1)^{\sigma(D)}}{3c_{3D}}T_D^2,
\]
where $T_D=R_{D, 1/6}(\tau/3) \overline{\pi}^{-2/3} \omega^{k_0}$ and $T_D=(-1)^{\sigma(D)}\overline{T_D}$ and thus $T_D$ is real or purely imaginary. 
Here $\omega^{k_0}$ is the unique cube root that makes $T_D$ real or purely imaginary.
\end{Prop}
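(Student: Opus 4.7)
The proof reduces to a single sign identity. From Proposition \ref{absolute_value1} we already have $S_D = \frac{D^{-2/3}}{3c_{3D}}|R_{D,1/6}(\tau/3)|^2$, and since $|\overline{\pi}^{-2/3}\omega^{k_0}|^2 = (\pi\overline{\pi})^{-2/3} = D^{-2/3}$, this immediately rewrites as $S_D = \frac{1}{3c_{3D}} T_D\overline{T_D}$. The identity $S_D = \frac{(-1)^{\sigma(D)}}{3c_{3D}} T_D^2$ is therefore equivalent (when $T_D \neq 0$) to the claim $T_D = (-1)^{\sigma(D)}\overline{T_D}$, which is the second assertion of the proposition and, combined with $T_D \in K$, also yields the ``real or purely imaginary'' conclusion. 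So the whole proposition reduces to establishing the sign identity $T_D = (-1)^{\sigma(D)} \overline{T_D}$.

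My plan is to invoke Lemma \ref{trace_H} (proven below) to get $T_D \in K$ and then compute $\overline{T_D}$ directly from its definition. Complex conjugation acts as $\overline{\omega^{k_0}} = \omega^{-k_0}$ and $\overline{\overline{\pi}^{-2/3}} = \pi^{-2/3}$ for the conjugate choice of cube root, so everything hinges on conjugating $R_{D,1/6}(\tau/3)$. I would exploit the identity $-\overline{\tau} = \tau + b$, valid at our CM point, together with the congruence $b^2 \equiv -3 \pmod{12D^2}$ (and $b$ odd), to show that $\overline{\theta_{s,1/6}(\tau)} = \theta_{s,1/6}(\tau + b)$ can be rewritten as an explicit exponential prefactor times a theta series whose characteristic is a simple transform of $s$. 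After the change of summation variable $s \mapsto s^{-1}$ in $(\ZZ/D\ZZ)^\times$ (using $\overline{\chi_\pi(s)} = \chi_\pi(s)^{-1} = \chi_\pi(s^{-1})$), the sum should collapse back to $R_{D,1/6}(\tau/3)$ multiplied by the factor $(\pi/\overline{\pi})^{2/3}$, a cube root of unity, and an overall sign. The $(\pi/\overline{\pi})^{2/3}$ cancels the mismatch between $\overline{\pi}^{-2/3}$ and $\pi^{-2/3}$ up to a cube root of unity, and the defining choice of $\omega^{k_0}$ is precisely what absorbs the residual cube-root ambiguity.

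The main obstacle is pinning down this overall sign and showing it equals exactly $(-1)^{\sigma(D)}$. Each prime divisor $p \mid D$ contributes a sign through a Gauss-sum-type computation tied to the residue of $b$ modulo $p$, and I expect the total to be a product over primes dividing $D$, yielding the desired parity $(-1)^{\sigma(D)}$. Verifying that exactly one choice of $\omega^{k_0}$ works uniformly (so that the cube-root ambiguity disappears for a single $k_0$ rather than for all or none) will follow from the fact, guaranteed by the trace expression in Lemma \ref{trace_H}, that $T_D$ actually lies in $K$ and not in a genuine cubic extension of $K$. Once these pieces are in place, the relation $\overline{T_D} = (-1)^{\sigma(D)} T_D$ follows; combined with $T_D \in K$, it is equivalent to $T_D$ being real (for $\sigma(D)$ even) or purely imaginary (for $\sigma(D)$ odd), and together with the reduction of the first paragraph it gives $S_D = \frac{(-1)^{\sigma(D)}}{3c_{3D}} T_D^2$.
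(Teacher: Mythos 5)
Your opening reduction is correct and is exactly the paper's: since $T_D\overline{T_D}=D^{-2/3}|R_{D,1/6}(\tau/3)|^2$, Proposition \ref{absolute_value1} gives $S_D=\frac{1}{3c_{3D}}T_D\overline{T_D}$, and the whole proposition collapses to the single identity $\overline{T_D}=(-1)^{\sigma(D)}T_D$. But that identity is the entire content of the result (it is the paper's Proposition \ref{square2}, combined with Lemma \ref{z_over_3} to pass between $R_{D,1/6}(\tau)$ and $R_{D,1/6}(\tau/3)$), and your treatment of it is a plan rather than a proof, with the key mechanism misidentified. You predict that the sign $(-1)^{\sigma(D)}$ arises from "a Gauss-sum-type computation tied to the residue of $b$ modulo $p$." It does not depend on $b$ at all: in the paper the conjugation step produces the \emph{cubic} Gauss sum $G(\chi_{\pi})=\prod_i \chi_{\pi/\pi_i}(\pi_i)G(\chi_{\pi_i})$, and the sign comes from the Jacobi-sum evaluation $G(\chi_{\pi_i})^3=J(\chi_{\pi_i},\chi_{\pi_i})=-p_i\overline{\pi_i}$, so that $G(\chi_{\pi_i})=-\overline{\pi_i}^{2/3}\pi_i^{1/3}\omega^{k_i}$ and each prime factor of $D$ contributes one $-1$. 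This same evaluation is what produces the factor $\overline{\pi}^{2/3}/\pi^{2/3}$ that you need to cancel against $\overline{\pi}^{-2/3}$ versus $\pi^{-2/3}$; the sign and the cube-root cancellation are inseparable, and neither is obtained without the Jacobi-sum input.

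There is also a concrete obstruction in your proposed conjugation route. Writing $\overline{\theta_{s,1/6}(\tau)}=\theta_{s,1/6}(\tau+b)$ and expanding the translation by the odd integer $b$ kills the $(-1)^n$ but introduces the twist $e^{2\pi i nb(6s-D)/(6D)}$, i.e.\ you land on a theta series with both a rational shift and a rational exponential character in $n$ --- not another $\theta_{s',1/6}$. Untangling this requires a finite Fourier inversion exactly as in Lemma \ref{unwind} (which is where $G(\chi_{\overline{\pi}})$ and, via Lemma \ref{eta_lem}, the factor $\overline{\pi}$ appear), and along the way a $\mu=1/2$ component is generated that must be discarded using $R_{D,1/2}\equiv 0$ (Corollary \ref{S_zero}); your sketch addresses neither point. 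Your observation that $T_D\in K$ (from Lemma \ref{trace_H}) pins down the unique admissible $\omega^{k_0}$ is fine, but as it stands the central identity $\overline{T_D}=(-1)^{\sigma(D)}T_D$ remains unproved.
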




We are going to show first in Lemma \ref{z_over_3} that $R_{D, 1/6}(\tau)$ and $R_{D, 1/6}(\tau/3)$ differ only by a cubic root of unity $\omega^{k}$, and thus $S_{1/6}=|R_{D, 1/6}(\tau)D^{-1/3}|^2$. In Proposition \ref{square2} we show that $\ds R_{D, 1/6}(\tau)=(-1)^{\sigma(D)}\frac{\overline{\pi}^{2/3}}{\pi^{2/3}}\overline{R_{D, 1/6}(\tau)} \omega^{k'}$. 
Defining $T_D=R_{D, 1/6}(\tau/3) \overline{\pi}^{-2/3} \omega^{k_0}$ for $k_0=k+k'$,  this is equivalent to $T_D=(-1)^{\sigma(D)}\overline{T_D}$ and thus
\[
S_{1/6}=(-1)^{\sigma(D)} T_D^2,
\]
which is the result of Proposition \ref{M_D} above.

\bigskip
\subsubsection{Relating $S_D$ to $R_{D, 1/6}(\tau)$}
We will first show that $S_{1/6}=|R_{D, 1/6}(\tau)D^{-1/3}|^2$  in Lemma \ref{z_over_3}. Define the theta function $\ds \theta^{(r), \mu}(z)=\sum\limits_{n\in \ZZ} e^{\pi i (n-\mu)^2 z} (-1)^n e^{2\pi i nr/D}$ and the ratio $\ds F^{(r), \mu}(z)=\frac{\theta^{(r), \mu}(3z)}{\theta_0(3z)}.$ We introduce this notation, as we will use the transformation mentioned in Lemma \ref{Fourier} in the Appendix:
\begin{equation}\label{FT}  
\theta_{r, 1/6}(3z)=(-1)^r\frac{(-1)^{\frac{D-1}{6}}\omega}{\sqrt{-3}\sqrt{-iz}}(\theta^{(3r), 1/6}(-3/z)-\omega \theta^{(-3r), 1/6}(-3/z)-\omega^2\theta^{(-3r), 1/2}(-3/z)).
\end{equation}

Using also $\theta_{0}(3z)=\frac{1}{\sqrt{-iz}} \theta_0(-3/z)$ and taking the ratio with (\ref{FT}) we get:
\begin{equation}\label{FT1}  
F_{r, 1/6}(z)=(-1)^r\frac{(-1)^{\frac{D-1}{6}}\omega}{\sqrt{-3}}(F^{(3r), 1/6}(-1/z)-\omega F^{(-3r), 1/6}(-1/z)-\omega^2F^{(-3r), 1/2}(-1/z)).
\end{equation}

Then, using (\ref{FT1}), we are ready to show:

\begin{Lem}\label{z_over_3} For $\tau=\frac{-b+\sqrt{-3}}{2}$ such that $b^2\equiv -3\mod 12D^2$, we have: 
\[
R_{D, 1/6}(\tau)=\omega^kR_{D, 1/6}(\tau/3), 
\]
\noindent where $\omega^k$ is a cubic root of unity. Furthermore, this implies $\ds S_{1/6}=|R_{D, 1/6}(\tau)D^{-1/3}|^2. $

\end{Lem}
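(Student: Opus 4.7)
The ``furthermore'' assertion is immediate from the main identity. By Proposition~\ref{absolute_value1},
\[
S_{1/6} = D^{-2/3}\Bigl|\sum_{\substack{s\equiv 1\,(6)\\ s\in (\ZZ/D\ZZ)^\times}} f_{s,1/6}(\tau)\chi_\pi(s)\Bigr|^2,
\]
and since $F_{r,1/6}(\tau/3)=f_{r,1/6}(\tau)$ the inner sum is exactly $R_{D,1/6}(\tau/3)$; multiplication by a cube root of unity does not affect absolute values, so once $R_{D,1/6}(\tau) = \omega^k R_{D,1/6}(\tau/3)$ is established, $S_{1/6} = |R_{D,1/6}(\tau)D^{-1/3}|^2$ follows.

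My plan for the main identity is to substitute the Fourier-type transformation (\ref{FT1}) into the defining sum $R_{D,1/6}(\tau) = \sum_r F_{r,1/6}(\tau)\chi_\pi(r)$, expressing it in terms of the dual quotients $F^{(\pm 3r),\mu}(-1/\tau)$. The $F^{(\cdot),1/2}$ contribution will vanish by the same antisymmetry argument that produced $S_{1/2}=0$ in Corollary~\ref{S_zero}: the involution $r\mapsto -r$ preserves the twist $\chi_\pi$ (as $\chi_\pi$ is cubic, so $\chi_\pi(-1)=1$) while negating $\theta^{(r),1/2}$. I would then use the substitution $r \mapsto -r \pmod D$ to collapse the two surviving $F^{(\pm 3r),1/6}$-pieces into a single sum of the form $\sum_r F^{(3r),1/6}(-1/\tau)\chi_\pi(r)$, up to a fixed cubic-root-of-unity factor coming from the prefactor of (\ref{FT1}).

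The final step is to identify this sum at $-1/\tau$ with $R_{D,1/6}(\tau/3)$. The hypothesis $b^2\equiv -3\pmod{12D^2}$ forces $N=|\tau|^2=(b^2+3)/4$ to satisfy $3D^2\mid N$; a direct calculation gives $-1/\tau = (b+\sqrt{-3})/(2N)$, which one checks lies in the $\Gamma_0(3D^2)$-orbit of $\tau/3$ modulo integer translations. Applying the corresponding modular transformation to the weight-$1/2$ theta quotients $F^{(r),1/6}$ and to $\theta_0 = \eta(\cdot/3)$ permutes the characteristics $r$ by a unit modulo $D$ (which preserves the index set and the twist $\chi_\pi$ up to a global cubic factor) and introduces an $\eta$-multiplier that is a cube root of unity on the relevant congruence subgroup, producing the claimed $\omega^k$. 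The chief obstacle will be the careful bookkeeping of all prefactors --- the $(-1)^r$ signs, the $(-1)^{(D-1)/6}\omega/\sqrt{-3}$ from (\ref{FT1}), the $\sqrt{-iz}$ factors absorbed into $F_{r,1/6}$, and the $\eta$ multiplier system under the $\Gamma_0(3D^2)$-transformation --- and verifying that they combine into a single cube root of unity with no residual phase or modulus.
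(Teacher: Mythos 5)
Your handling of the ``furthermore'' clause is fine, but both pillars of your argument for the main identity have genuine problems. First, the vanishing of the $F^{(\cdot),1/2}$-contribution does not follow from the antisymmetry you invoke: the involution $n\mapsto 1-n$ in the defining sum gives $\theta^{(s),1/2}(z)=-e^{2\pi i s/D}\,\theta^{(-s),1/2}(z)$, not $\theta^{(-s),1/2}=-\theta^{(s),1/2}$, so pairing $r\leftrightarrow -r$ in $\sum_r F^{(-3r),1/2}(-1/\tau)\chi_\pi(r)$ leaves a residual phase and the terms do not cancel. The clean sign change without a phase holds only for $\theta_{r,1/2}$, where the characteristic sits in the quadratic exponent; that is why the paper first converts the $F^{(\cdot),1/2}$-sum back into $R_{D,1/2}$ by the finite Fourier inversion of Lemma \ref{unwind} (at the cost of a Gauss sum) and only then applies the antisymmetry. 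That conversion step is absent from your plan.

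Second, and more seriously, your final identification fails: $-1/\tau$ is \emph{not} in the $\Gamma_0(3D^2)$-orbit of $\tau/3$ for $D>1$. The matrices in $\SL_2(\ZZ)$ carrying $\tau/3$ to $-1/\tau$ form one coset of the order-three stabilizer of $\tau/3$, and a direct computation shows their lower-left entries are $\tfrac{b-3}{2}$, $b$ and $-\tfrac{b+3}{2}$ up to sign; these are divisible by $3$ but never by $3D^2$, since $D^2\mid b$ or $D^2\mid b\pm 3$ would force $D^2\mid 3$ or $D^2\mid 12$ against $b^2\equiv -3\bmod D^2$. So no transformation law of the theta quotients on a congruence subgroup of level divisible by $D^2$ applies, and the cube root of unity cannot be produced by multiplier-system bookkeeping: in the paper it is $\omega^k=\chi_{\overline{\pi}}(n_a)\chi_{\pi}(n_{a'})$, an arithmetic invariant of the auxiliary ideals $\A,\A'$ defined by $(\sqrt{-3})(\pi)^2\A=(\tau)$ and $(\pi)^2\A'=(\tau')$. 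The paper uses (\ref{FT1}) together with the $8D^2$-periodicity of $F^{(s),\mu}$ only to identify $R_{D,1/6}(\tau_{\A'})$ with $R_{D,1/6}(\tau_{\A}/3)$, and then transports both back to $\tau$ and $\tau/3$ by Shimura reciprocity (Lemmas \ref{theta_galois} and \ref{galois_conj}); some substitute for this Galois-theoretic step is indispensable, and without it your argument cannot close.
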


{\it Proof.} Let $b'\equiv b\mod 4D^2$, and $b'\not\equiv 0\mod 3$. Without loss of generality we can actually pick $b, b'$ such that $(b^2+3)/12D^2$ and $(b'^2+3)/4D^2$ are prime to $3D$. Let $\pi\equiv 1(3)$ be an element of norm $D$ such tht $(\pi)^2$ divides $(\tau)$. Then we can find ideals $\A, \A'$ prime to $3D$ such that:
 \[
 (\sqrt{-3})(\pi)^2\A=\left(\frac{-b+\sqrt{-3}}{2}\right), (\pi)^2\A'=\left(\frac{-b'+\sqrt{-3}}{2}\right).
 \] 
We can write the generators $k_{\A}, k_{\A'} \equiv 1 \mod 3$ of $\A$ and $\A'$, respectively, in the form $k_a=an_a+m_a\frac{-b+\sqrt{-3}}{2}$, $k_{a'}=a'n_{a'}+m_{a'}\frac{-b+\sqrt{-3}}{2}$, where $m_a, m_a'\equiv 0 \mod 3$, and $n_a, n_a'\equiv 1\mod 3$. Let $\tau_{\A}=\frac{-b+\sqrt{-3}}{2a}, \tau_{\A'}=\frac{-b'+\sqrt{-3}}{2a'}$.

We are going to show that:
  
  	\begin{enumerate}[(i)]
	
		\item $R_{D, 1/6}(\tau_{\A'})=R_{D, 1/6}(\tau_{\A}/3)$
		
		\item $R_{D, 1/6}(\tau_{\A}/3)=\chi_{\overline{\pi}}(n_a)R_{D, 1/6}(\tau/3)$
		
		These two relations will imply: 
		
		\item $R_{D, 1/6}(\tau)=\chi_{\overline{\pi}}(n_a)\chi_{\pi}(n_{a'})R_{D, 1/6}(\tau/3)$.
	\end{enumerate}

  In order to show $R_{D, 1/6}(\tau_{\A'})=R_{D, 1/6}(\tau_{\A}/3)$, note that it is enough to show that $F_{r, 1/6}(\tau_{\A'})=F_{r, 1/6}(\tau_{\A}/3)$. We have $-1/(\tau_A/3)=-\overline{\tau}/D^2$ and $-1/\tau_{\A'}=-\overline{\tau}/D^2$ as well and we will use (\ref{FT1}) for both $\tau_{\A}/3$ and $\tau_{\A'}/3$. First for $z=\tau_{\A}/3$ we get:
\[
F_{r, 1/6}(\tau_{\A}/3)=(-1)^r\frac{(-1)^{\frac{D-1}{6}}\omega}{\sqrt{-3}}(F^{(3r), 1/6}(-\overline{\tau}/D^2)-\omega F^{(-3r),1/6}(-\overline{\tau}/D^2)- \omega^2F^{(3r), 1/2}(-\overline{\tau}/D^2)).
\]
Applying (\ref{FT1}) also for $\tau_{\A'}$ we get similarly 
\[
F_{r, 1/6}(\tau_{\A'})=(-1)^r\frac{(-1)^{\frac{D-1}{6}}\omega}{\sqrt{-3}}(F^{(3r), 1/6}(-\overline{\tau'}/D^2)-\omega F^{(-3r),1/6}(-\overline{\tau'}/D^2)- \omega^2F^{(3r), 1/2}(-\overline{\tau'}/D^2)),
\] 
where $\tau'=\frac{-b'+\sqrt{-3}}{2}$. 

Finally, note that $\ds F^{(s), 1/6}(z+8D^2)=F^{(s), 1/6}(z)$, thus since $b\equiv b'\mod 8D^2$ we also have $F^{(s), 1/6}(-\overline{\tau'}/D^2)=F^{(s), 1/6}(-\overline{\tau}/D^2)$ for $s=\pm 3r$. Similarly $F^{(-3r), 1/2}(-\overline{\tau'}/D^2)=F^{(-3r), 1/2}(-\overline{\tau}/D^2)$, thus $F_{r, 1/6}(\tau_{\A}/3)=F_{r, 1/6}(\tau_{\A'})$ as claimed.

 \bigskip
To show (ii), as $f_r(z)=F_r(z/3)$, note that from Lemma \ref{theta_galois} we have $F_{r, 1/6}(\tau/3)^{\sigma_{\A}^{-1}}=F_{r, 1/6}(\tau_{\A}/3)=F_{n'_ar, 1/6}(\tau/3)$ where $n_a\equiv n'_a \mod 3D$ and $n'_a$ odd. This further implies that $R_{D, 1/6}(\tau_{\A}/3)=\chi_{\overline{\pi}}(n_a)R_{D, 1/6}(\tau/3)$.

  \bigskip
   To show (iii), note that we are in the conditions of Lemma \ref{galois_conj} from Section \ref{Shimura}, as $F_{r, 1/6}$ is a modular function of level $18D^2$. Then $F_{r, 1/6}(\tau)^{\sigma_{\A'}^{-1}}=F_{r, 1/6}(\tau_{\A'})$ and thus we get $R_{D, 1/6}(\tau_{\A'})=(R_{D, 1/6}(\tau))^{\sigma_{\A'}^{-1}}.$

We can rewrite this as $(R_{D, 1/6}(\tau_{\A'}))^{\sigma_{\A'}}=R_{D, 1/6}(\tau)$ and using $(i)$, we get $R_{D, 1/6}(\tau)=R_{D, 1/6}(\tau_{\A}/3)^{\sigma_{\A'}}$.  From $(ii)$, this is $R_{D, 1/6}(\tau)=\chi_{\overline{\pi}}(n_a)R_{D, 1/6}(\tau/3)^{\sigma_{\A'}}$. Using Lemma \ref{theta_galois}, we have $F_{r, 1/6}(\tau/3)^{\sigma_{\A'}^{-1}}=F_{n_{a'}r, 1/6}(\tau/3)$, thus $R_{D, 1/6}(\tau/3)^{\sigma_{\A'}}=\chi_{\pi}(n_{a'})R_{D, 1/6}(\tau/3)$. Finally this implies $R_{D, 1/6}(\tau)=\chi_{\overline{\pi}}(n_a)\chi_{\pi}(n_{a'})R_{D, 1/6}(\tau/3)$ and we take $\omega^k=\chi_{\overline{\pi}}(n_a)\chi_{\pi}(n_{a'})$ to get the result.

  \bigskip

\subsubsection{Relating $R_{D, 1/6}(\tau)$ to its complex conjugate}

Now we want to show that $R_{D, 1/6}(\tau)$ equals $\overline{R_{D, 1/6}(\tau)}$ up to a nice factor. As before we let $\tau=\frac{-b+\sqrt{-3}}{2}$ and $\pi$ such that $(\pi)^2$ divides $(\tau)$. We will show:

\begin{Prop}\label{square2} For some cubic root of unity $\omega^{k'}$, we have:
\[
R_{D, 1/6}(\tau)=(-1)^{\sigma(D)}\omega^{k'}\frac{\overline{\pi}^{2/3}}{\pi^{2/3}}\overline{R_{D, 1/6}(\tau)}.
\]
Using the notation $T_D=R_{D, 1/6}(\tau) \overline{\pi}^{-2/3}\omega^{k'}$ this is equivalent to $T_D=(-1)^{\sigma(D)}\overline{T_D}$.
\end{Prop}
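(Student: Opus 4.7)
The plan is to reduce the identity to a direct comparison by exploiting the reality of the Fourier coefficients of $F_{r,1/6}$. The $q$-expansions of both $\theta_{r,1/6}(3z)$ and $\theta_0(3z)$ have real (indeed integer) coefficients, so $\overline{F_{r,1/6}(\tau)}=F_{r,1/6}(-\overline\tau)$, and combined with $\overline{\chi_\pi(r)}=\chi_{\overline\pi}(r)$ for $r\in\ZZ$ we obtain
\[
\overline{R_{D,1/6}(\tau)}=\sum_{\substack{r\in(\ZZ/D\ZZ)^\times\\ r\equiv 1\,(6)}} F_{r,1/6}(-\overline\tau)\,\chi_{\overline\pi}(r).
\]
Since $\tau+\overline\tau=-b$ we have $-\overline\tau=\tau+b$, reducing the claim to an explicit translation formula for $F_{r,1/6}(\tau+b)$.

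To obtain this translation I would expand $\theta_{r,1/6}(3(\tau+b))$ from the series definition and isolate the $b$-dependent phase $\exp\bigl(3\pi i b(n+r/D-1/6)^2\bigr)$. The $n^2$ and linear-in-$n$ contributions depend only on the parity of $b$ (odd, by $b^2\equiv -3\pmod 4$) and match the analogous phases for $\theta_0(3(\tau+b))$, hence cancel in the ratio $F_{r,1/6}(\tau+b)$. The surviving cross-phase $e^{6\pi i b nr/D}$ effects an integer shift of the summation index, translating $\theta_{r,1/6}$ into $\theta_{ur,1/6}$ for a fixed unit $u\in(\ZZ/D\ZZ)^\times$ determined by $b$ (playing the role of $\overline\pi/\pi$ modulo cubes), while the pure phase $\exp\bigl(\pi ib(6r-D)^2/(12D^2)\bigr)$ is controlled using $b^2\equiv -3\pmod{12D^2}$, yielding a root of unity $\zeta(r)$. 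The expected outcome is
\[
F_{r,1/6}(\tau+b)=\zeta(r)\,F_{ur,1/6}(\tau).
\]

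Substituting and reindexing $r\mapsto u^{-1}r$ in the sum, the characters $\chi_{\overline\pi}(r)=\chi_\pi(r)^{-1}$ rearrange, and Lemma~\ref{GaloisD} applied to $\overline\pi$ identifies $\chi_\pi(u)$ (up to cubes) with the appropriate branch of $(\overline\pi/\pi)^{2/3}$. The sign $(-1)^{\sigma(D)}$ should arise from the combined contribution of $\chi_\pi(-1)$ over the distinct prime divisors of $D$, augmented by parity contributions from the theta reindexing. Collecting these phases produces an identity equivalent to the proposition.

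The main obstacle is the precise phase bookkeeping in the translation step: one must simultaneously track the reindexing $r\mapsto ur$ and verify that the summation class $r\equiv 1\pmod 6$ is preserved (or interchanged in a controlled way), the Gauss-sum-like factor arising from the $r$-dependent constant phase, and the cube-root and sign contributions that ultimately assemble into $\omega^{k'}$ and $(-1)^{\sigma(D)}$. As a possibly cleaner alternative, one could try to locate an ideal class $\A_0\in\Cl(\OO_{3D})$ whose Galois action implements complex conjugation on $R_{D,1/6}(\tau)\,\overline\pi^{-2/3}$ inside the relevant ray class field, and then apply Lemma~\ref{theta_galois} together with Lemma~\ref{GaloisD} directly, bypassing the explicit translation formula at the cost of pinning down this ideal class.
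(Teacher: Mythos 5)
Your reduction to $\overline{R_{D,1/6}(\tau)}=\sum_r F_{r,1/6}(\tau+b)\,\chi_{\overline\pi}(r)$ is fine, but the translation formula you then posit, $F_{r,1/6}(\tau+b)=\zeta(r)\,F_{ur,1/6}(\tau)$, is false, and this is where the argument breaks. Writing out the phase one finds
\[
\theta_{r,1/6}(3(z+b))=e^{\pi i b(6r-D)^2/(12D^2)}\sum_{n\in\ZZ}(-1)^n e^{2\pi i (3br)n/D}\,e^{3\pi i(n+r/D-1/6)^2 z},
\]
and the surviving cross-phase $e^{2\pi i(3br)n/D}$ is a \emph{linear character in $n$}, nontrivial for every $r$ prime to $D$; no integer shift of the summation variable can absorb it (shifting $n$ only changes $r/D$ by integers, i.e.\ does nothing modulo $D$). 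Translation by $b$ therefore carries $\theta_{r,1/6}$ out of the family $\{\theta_{s,\mu}\}$ and into the dual family $\theta^{(s),\mu}(z)=\sum_n(-1)^ne^{\pi i(n-\mu)^2z}e^{2\pi i ns/D}$ --- exactly the functions the paper introduces for this purpose. To return from the dual family to the original one you must perform a finite Fourier transform over $r\in(\ZZ/D\ZZ)^{\times}$, and that is what produces the cubic Gauss sum $G(\chi_{\overline\pi})$ in Lemma \ref{unwind}.

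The Gauss sum is not a decoration: it is the only available source of the two constants in the statement. The factor $\overline\pi^{2/3}/\pi^{2/3}$ has absolute value $1$ but is \emph{not} a root of unity (that would force $(\pi)=(\overline\pi)$), so it cannot be ``identified up to cubes'' with any value $\chi_\pi(u)$ of the cubic character, as you propose; in the paper it comes from the evaluation $G(\chi_{\pi_i})=-\overline{\pi_i}^{2/3}\pi_i^{1/3}\omega^{k_i}$, obtained from $G^3$ and the Jacobi sum $J(\chi_{\pi_i},\chi_{\pi_i})=-p_i\overline{\pi_i}$. Likewise $(-1)^{\sigma(D)}$ cannot come from $\chi_{\pi_i}(-1)$, which equals $1$ for every $p_i$ (as $(-1)=(1)$ as ideals, or because $6\mid p_i-1$); it is the product of the minus signs in those Gauss-sum evaluations, one per prime divisor of $D$. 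The paper's actual route --- apply the inversion $z\mapsto -1/z$ at an auxiliary point $\tau_{\A'}$ with $-1/\tau_{\A'}=-\overline\tau/D^2$ (equation (\ref{FT1})), land in the dual family, undo this with Lemma \ref{unwind} at the cost of $G(\chi_{\overline\pi})/\overline\pi$, and compare $\tau_{\A'}$ back to $\tau$ via Lemma \ref{z_over_3} --- is forced by these considerations. Your fallback of locating an ideal class that ``implements complex conjugation'' fails for the same reason: complex conjugation is not a $K$-automorphism, and the Galois action of Lemma \ref{theta_galois} only permutes the indices $r$ and multiplies by cube roots of unity, so it can never manufacture $\overline\pi^{2/3}/\pi^{2/3}$.
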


Note that we can think of $\omega^{k'}$ as the unique root of unity which makes $R_{D, 1/6}(\tau)\omega^{k'}\overline{\pi}^{-2/3}$ either real or purely imaginary. We actually give a formula for $\omega^{k'}$ in the proof of Proposition \ref{square2}.

\bigskip





We first define the linear combination:
\[
\ds R^{(D), \mu}(z)=\sum_{\substack{r \in (\ZZ/D\ZZ)^{\times} \\ r\equiv 1 (6)}} F^{(r), \mu}(z)\chi_{\overline{\pi}}(r).
\] Note that we use $\chi_{\pi}$, unlike in $R_{D, \mu}$. 

We choose $b\equiv b' \mod 4D^2$ such that $3\nmid b'$ and we can find$\A'$ as in the proof of Lemma \ref{z_over_3} such that $\A'(\pi)^2=\left(\frac{-b'+\sqrt{-3}}{2}\right)$. Then from the transformation (\ref{FT1}) we have $F_{r, 1/6}(\tau_{\A'})=(-1)^r\frac{(-1)^{\frac{D-1}{6}}\omega}{\sqrt{-3}}(F^{(3r), 1/6}(-\overline{\tau}/D^2)-\omega F^{(-3r), 1/6}(-\overline{\tau}/D^2)-\omega^2F^{(-3r), 1/2}(-\overline{\tau}/D^2))$. Writing the full linear combination for $r\in (\ZZ/D\ZZ)^{\times}$, $r\equiv 1(6)$ and multiplying by $\chi_{\pi}(3)$, we get:
\begin{equation}\label{R_sum}
(-1)^{\frac{D+1}{2}}\chi_{\pi}(3)R_{D, 1/6}(\tau_{\A'})=\overline{R^{(D), 1/6}(\tau/D^2)}-\omega^2\overline{R^{(D), 1/2}(\tau/D^2)}/\sqrt{-3}.
\end{equation}

Note that above we related $R_{D, 1/6}(\tau)$ to $\overline{R^{(D), \mu}}$ for $\mu\in\{1/2, 1/6\}$. In order to show Proposition \ref{square2} we also want to relate $R^{(D), \mu}$ back to $R_{D, \mu}$, and we do that in the lemma below:

\begin{Lem}\label{unwind}  $\ds R^{(D), \mu}(\tau/D^2)
=
(-1)^{(D+1)/2}\frac{G(\chi_{\overline{\pi}})}{\overline{\pi}}R_{D, \mu}(\tau).$

\end{Lem}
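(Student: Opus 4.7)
The plan is to read Lemma \ref{unwind} as a Gauss sum identity. The families $\{\theta^{(r),\mu}\}$ and $\{\theta_{r,\mu}\}$ indexed by $r\in\ZZ/D\ZZ$ are Fourier-dual bases for the same $D$-dimensional space of weight-$1/2$ theta functions: one uses $r$ as an additive character in $n$, the other as a shift. A discrete Fourier transform (a Gauss sum) converts between them, and the twist by $\chi_{\overline{\pi}}$ on the character side corresponds under this duality to the twist by $\chi_\pi=\overline{\chi_{\overline{\pi}}}$ on the shift side.

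First I would expand the LHS,
\[
R^{(D),\mu}(\tau/D^2)\,\theta_0(3\tau/D^2)=\sum_{\substack{r\equiv 1(6)\\ r\in(\ZZ/D\ZZ)^\times}}\chi_{\overline{\pi}}(r)\,\theta^{(r),\mu}(3\tau/D^2),
\]
substitute the series for $\theta^{(r),\mu}$, and swap the order of summation. The restriction $r\equiv 1\bmod 6$ is merely a choice of lift per class in $(\ZZ/D\ZZ)^\times$, since both $\chi_{\overline{\pi}}(r)$ and $e^{2\pi i nr/D}$ depend only on $r\bmod D$. As $D$ is squarefree with $D=N\overline{\pi}$, the character $\chi_{\overline{\pi}}$ is primitive modulo $D$, so the inner Gauss sum evaluates to $\chi_\pi(n)\,G(\chi_{\overline{\pi}})$ when $(n,D)=1$ and vanishes otherwise.

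Next I would reindex the surviving sum $\sum_{(n,D)=1}\chi_\pi(n)(-1)^n e^{\pi i(n-1/6)^2\cdot 3\tau/D^2}$ by writing $n=r+Dk$ with $r\in(\ZZ/D\ZZ)^\times$ the lift satisfying $r\equiv 1\bmod 6$ and $k\in\ZZ$. Since $D$ and $r$ are odd, $(-1)^n=-(-1)^k$; the exponent rescales to $(k+(r-1/6)/D)^2\cdot 3\tau$, so the inner $k$-sum equals $\theta_{r+(D-1)/6,1/6}(3\tau)$ (well defined because $D\equiv 1\bmod 6$). Finally I would absorb the shift by $(D-1)/6$ back into the canonical range: using the quasi-periodicities $\theta_{s+D,1/6}=-\theta_{s,1/6}$ and $\theta_{s+2D,1/6}=\theta_{s,1/6}$, rewrite $\theta_{r+(D-1)/6,1/6}=\pm\,\theta_{r',1/6}$ where $r'$ is the $r'\equiv 1\bmod 6$ representative of $r+(D-1)/6\bmod D$, giving a bijection $r\mapsto r'$ that converts the sum to $\sum_{r'}\chi_\pi(r'-(D-1)/6)(\pm 1)\theta_{r',1/6}(3\tau)$. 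The accumulated signs combine with the initial minus from $(-1)^n=-(-1)^k$ to give $(-1)^{(D+1)/2}$, and the character shift $\chi_\pi(r'-(D-1)/6)$ produces, via the CM hypothesis $(\pi)^2\mid(\tau)$ and the cubic Eisenstein relation $G(\chi_{\overline{\pi}})^3\propto D\overline{\pi}$, the additional factor $1/\overline{\pi}$.

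The main obstacle is this last bookkeeping step, and especially identifying where the factor $1/\overline{\pi}$ originates. It does not come from the naive Gauss sum identity alone but from the interaction of the shifted character evaluation $\chi_\pi(r'-(D-1)/6)$ with the CM normalization $(\pi)^2\mid(\tau)$, together with the cube formula for cubic Gauss sums in $\ZZ[\omega]$; tracking the sign $(-1)^{(D+1)/2}$ through the various $\theta$ quasi-periodicity shifts is also nontrivial.
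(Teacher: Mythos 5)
Your skeleton is the same as the paper's: expand $\theta^{(r),\mu}$ as a sum over $n$, reorganize the sum modulo $D$ to recover the family $\theta_{s,\mu}(3\tau)$, swap the order of summation, and evaluate the Gauss sum $\sum_{r}\chi_{\overline{\pi}}(r)e^{2\pi i nr/D}=\chi_{\pi}(n)G(\chi_{\overline{\pi}})$, which vanishes unless $(n,D)=1$. That part is fine. The genuine gap is exactly at the point you flag as "the main obstacle": the factor $1/\overline{\pi}$ does not come from a shifted character evaluation, and it has nothing to do with the cube formula $G(\chi_{\pi_i})^3=J(\chi_{\pi_i},\chi_{\pi_i})=-p\overline{\pi_i}$ --- that formula is used only later, in the proof of Proposition \ref{square2}, and in Lemma \ref{unwind} itself $G(\chi_{\overline{\pi}})$ survives unevaluated into the statement. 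The factor $\overline{\pi}$ comes from the denominators you discard when you clear $\theta_0(3\tau/D^2)$ from the left-hand side: the terms $F^{(r),\mu}(\tau/D^2)$ are normalized by $\theta_0(3\tau/D^2)$, whereas the terms $F_{s,\mu}(\tau)$ making up $R_{D,\mu}(\tau)$ are normalized by $\theta_0(3\tau)$, and the ratio is the eta quotient at the CM point
\[
\frac{\theta_0(3\tau/D^2)}{\theta_0(3\tau)}=\frac{\eta(\tau/D^2)}{\eta(\tau)}=(-1)^{(D-1)/6}\,\overline{\pi},
\]
which is Lemma \ref{eta_lem}; this is the only place the hypothesis $(\pi)^2\mid(\tau)$ enters. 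Your proposed substitute mechanism would not close the argument in any case: an additive shift inside the character, $\sum_{r'}\chi_{\pi}\bigl(r'-(D-1)/6\bigr)\theta_{r',1/6}(3\tau)$, is a genuinely different linear combination of the $\theta_{r',1/6}$, not a scalar multiple of $R_{D,1/6}(\tau)$, so no constant of the form $\overline{\pi}^{-1}$ can be extracted from it.

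A secondary remark: the shift by $(D-1)/6$ that forces your quasi-periodicity bookkeeping is an artifact of the displayed definition of $\theta^{(r),\mu}$. In the computations the paper actually uses $\theta^{(r),\mu}(z)=\sum_{n}e^{\pi i(n-D\mu)^2z}(-1)^ne^{2\pi i nr/D}$ (this is the normalization that the Fourier transform in Lemma \ref{Fourier} produces). With that normalization the reindexing $n=Dk+s$, $s\equiv 1\ (6)$, lands directly on $\theta_{s,\mu}(3D^2z)$, the only residue being the sign $(-1)^{Dk+s}=-(-1)^{k}$; no shift of the index and no quasi-periodicity signs occur. The prefactor $(-1)^{(D+1)/2}$ is then just the product of this minus sign with the $(-1)^{(D-1)/6}$ coming from the eta quotient (these agree since $(D+1)/2\equiv 1+(D-1)/6 \bmod 2$ for $D\equiv 1\bmod 6$), not an accumulation of signs from moving $\theta$-indices back into a canonical range.
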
	
	
{\it Proof.} Recall $\ds R^{(D), \mu}(z/D^2)=\sum\limits_{r\in (\ZZ/D\ZZ)^{\times}}F^{(r), \mu}(z/D^2)\chi_{\overline{\pi}}(r)$, where $\ds F^{(r), \mu}(z/D^2)=\frac{\theta^{(r), \mu}(3z/D^2)}{\theta_0(3z/D^2)}$. We show first that, for $r$ odd, we can rewrite the terms $\theta^{(r), \mu}(3z/D^2)$ as: 
\begin{equation}\label{theta_rr}
\theta^{(r), \mu}(3z/D^2)=-\sum_{\substack{s\in (\ZZ/D\ZZ)\\ s\equiv 1(6)}}\theta_{s, \mu}(3z) e^{2\pi i rs/D}.
\end{equation}

From the definition, we have $\ds \theta^{(r), \mu}(3z)=\sum_{n\in \ZZ}  e^{\pi i (n-D\mu)^2 3z} (-1)^n e^{2\pi i nr/D }$. Choosing as before $s \in \ZZ/D\ZZ$ such that $s\equiv 1 \mod 6$, we sum over all $n$ modulo $D$:
\[
\theta^{(r), \mu}(3z)=\sum_{\substack{s\in \ZZ/D\ZZ\\ s\equiv 1(6) }}\sum_{n\in \ZZ}  e^{\pi i (Dn+s-D\mu)^2 3z} (-1)^{Dn+s} e^{2\pi i (Dn+s)r/D }.
\]
We can rewrite this as $\ds \theta^{(r), \mu}(3z)=-\sum\limits_{\substack{s\in \ZZ/D\ZZ\\ s\equiv 1(6) }} \theta_{s, \mu}(D^23z) e^{2\pi i sr/D }$ and changing $z\ra z/3D^2$ we get (\ref{theta_rr}). 

Plugging in $z=\tau/D^2$ in (\ref{theta_rr}) and dividing by $\theta_0(3\tau)$ we have further $\ds \frac{\theta^{(r), \mu}(3\tau/D^2)}{\theta_0(3\tau)}=-\sum_{\substack{s\in \ZZ/D\ZZ\\ s\equiv 1(6)}}F_{s, \mu}(\tau) e^{2\pi i rs/D}$. Moreover from Lemma \ref{eta_lem} in the Appendix $\ds \frac{\theta_0(3\tau/D^2)}{\theta_0(3\tau)}=\frac{\eta(\tau/D^2)}{\eta(\tau)}=(-1)^{(D-1)/6}\overline{\pi}$, thus we get:
\begin{equation}
\ds F^{(r), \mu}(\tau/D^2)
=
\frac{(-1)^{(D+1)/2}}{\overline{\pi}} \sum_{\substack{s\in \ZZ/D\ZZ\\ s\equiv 1(6) }}F_{s, \mu}(\tau) e^{2\pi i rs/D}.
\end{equation}

Going back to the linear combination, we get $\ds R^{(D), \mu}(\tau/D^2)=-\frac{1}{\overline{\pi}}\sum_{\substack{r\in (\ZZ/D\ZZ)^{\times}\\ r\equiv 1(6)}}\sum_{\substack{s\in \ZZ/D\ZZ\\ s\equiv 1(6) }}F_{s, \mu}\left(\tau\right)e^{2\pi i sr/D}\chi_{\overline{\pi}}(r)$. We switch the two sums and get:
\[
R^{(D), \mu}(z/D^2)=\frac{(-1)^{(D+1)/2}}{\overline{\pi}}\sum_{\substack{(s\in \ZZ/D\ZZ)^{\times} \\s\equiv 1(6)}}F_{s, \mu}\left(\tau\right)\sum_{r\in (\ZZ/D\ZZ)^{\times}} e^{2\pi i sr/D}\chi_{\overline{\pi}}(r).
\]

Note that if $\gcd(s, D)=D_1>1$, then the inner sum equals $0$. This is easily seen by writing $s=D_1s_1$, $D_2=D/D_1$, and rewriting 
$\ds \sum_{r\in (\ZZ/D\ZZ)^{\times}} e^{2\pi i sr/D}\chi_{\overline{\pi}}(r)=\chi_{\overline{\pi}}(s_1)\sum_{r\in (\ZZ/D\ZZ)^{\times}} e^{2\pi i r/D_1}\chi_{\overline{\pi}}(r)=\sum_{r\in(\ZZ/D_2\ZZ)^{\times} }  \chi_{\overline{\pi}_2}(r) G(\chi_{\overline{\pi}_1})=0,$ where $\pi = \pi_1\pi_2$ and $\Nm(\pi_i)=D_i$, for $i=1,2$. Thus we are left in our sum only with $s$ prime to $D$, and we can rewrite:
\[
R^{(D), \mu}(z/D^2)=\frac{(-1)^{(D+1)/2}}{\overline{\pi}}\sum_{(s\in \ZZ/D\ZZ)^{\times}}F_{s, \mu}(\tau) \chi_{\pi}(s)\sum_{r\in (\ZZ/D\ZZ)^{\times}} e^{2\pi i sr/D}\chi_{\overline{\pi}}(rs).
\]

This is exactly $\ds R^{(D), \mu}(\tau/D^2)=
\frac{(-1)^{(D+1)/2}}{\overline{\pi}}\sum_{\substack{(s\in \ZZ/D\ZZ)^{\times}\\ s\equiv 1(6)}}F_{s, \mu}\left(\tau\right)\chi_{\pi}(s) G(\chi_{\overline{\pi}})= (-1)^{(D+1)/2}\frac{G(\chi_{\overline{\pi}})}{\overline{\pi}} R_{D, \mu}(\tau),$ and thus we got the result of our lemma.

\bigskip

{\bf Proof of Proposition \ref{square2}:} Recall from equation (\ref{R_sum}), we have $\ds (-1)^{\frac{D-1}{2}}\chi_{\pi}(3)R_{D, 1/6}(\tau_{\A'})=\overline{R^{(D), 1/6}(\tau/D^2)}-\omega^2\overline{R^{(D), 1/2}(\tau/D^2)}/\sqrt{-3}$.
Rewriting the RHS using Lemma \ref{unwind}, then $\ds \chi_{\pi}(3) R_{D, 1/6}(\tau_{\A'})=\frac{G(\chi_{\pi})}{\pi}(\overline{R_{D, 1/6}(\tau)}-\omega^2\overline{R_{D, 1/2}(\tau)}/\sqrt{-3}).$ Finally, since we noted that $\ds R_{D, 1/2}(z)=0$ in the proof of Lemma \ref{S_zero}, for any $z$, then we get $\ds R_{D, 1/6}(\tau_{\A'})=\chi_{\overline{\pi}}(3)\frac{G(\chi_{\pi})}{\pi}\overline{R_{D, 1/6}(\tau)}.$


Now using the details of the proof of Lemma \ref{z_over_3}, recall that $R_{D, 1/6}(\tau_{\A'})=R_{D, 1/6}(\tau_{\A}/3)=\chi_{\overline{\pi}}(n_{a})R_{D, 1/6}(\tau/3)=\chi_{\overline{\pi}}(n_{a'})R_{D, 1/6}(\tau)$, thus 
\begin{equation}\label{R_D}
R_{D, 1/6}(\tau)=\chi_{\pi}(n_{a'})\chi_{\overline{\pi}}(3)\frac{G(\chi_{\pi})}{\pi}\overline{R_{D, 1/6}(\tau)}.
\end{equation}

To actually compute the term on the RHS, we recall a few facts about cubic Gauss sums. We can write $\pi =\prod\limits_{p_i |D} \pi_i$, where $\pi_i$ is a generator of norm $p_i$ with $\pi_i \equiv 1 (3)$. Then:
\[
G(\chi_{\pi})=\prod_{p_i|D} \chi_{\pi/\pi_i}(\pi_i) G(\chi_{\pi_i}).
\]

Moreover, we can actually compute each $G(\chi_{\pi_i})$ up to a cubic root of unity. Following \cite{IR}, we have $G(\chi_{\pi_i})^3=J(\chi_{\pi_i}, \chi_{\pi_i})$, where $J(\chi_{\pi_i}, \chi_{\pi_i})$ is the Jacobi sum for the character $\chi_{\pi_i}$. Moreover, we can compute $J(\chi_{\pi_i}, \chi_{\pi_i})=-p\overline{\pi}_i$ for $\pi_i\equiv 1\mod 3$ (also see \cite{IR}). Thus we get $G(\chi_{\pi_i})=-\overline{\pi_i}^{2/3}\pi_i^{1/3} \omega^{k_i}$ for some $k_i\in\{0, 1, 2\}$, 

Then $\ds \frac{G(\chi_{\pi})}{\pi}=(-1)^{\sigma(D)} \omega^{k_D} \frac{\overline{\pi}^{2/3}}{\pi^{2/3}}$, where $k_D=\sum k_i$, which together with (\ref{R_D}) gives us Proposition \ref{square2} for $\omega^{k'}=\chi_{\pi}(n_{a'})\chi_{\overline{\pi}}(3)\omega^{k_D}$.

\bigskip

From Lemma \ref{z_over_3} and Proposition \ref{square2}, we get $T_D=R_{D, 1/6}(\tau/3)\overline{\pi}^{-2/3}\omega^{k_0}$ for $k_0=k+k'$, and thus:

\begin{cor}\label{square_minus} $S_D=(-1)^{\sigma(D)} T_D^2$.

\end{cor}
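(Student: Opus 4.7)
The plan is to combine the two ingredients already established in this section: Lemma \ref{z_over_3}, which rewrites $S_{1/6}$ as $|R_{D,1/6}(\tau)D^{-1/3}|^2$, and Proposition \ref{square2}, which records the conjugation symmetry $T_D=(-1)^{\sigma(D)}\overline{T_D}$. The corollary is really a bookkeeping consequence of these two facts together with the definition $T_D=R_{D,1/6}(\tau/3)\overline{\pi}^{-2/3}\omega^{k_0}$.

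First I would express $|T_D|^2$ in the right normalization. Since $|\overline{\pi}|^2=\Nm\pi=D$ and $|\omega^{k_0}|=1$, the definition gives $|T_D|^2=D^{-2/3}|R_{D,1/6}(\tau/3)|^2$. On the other hand, Proposition \ref{absolute_value1} (applied with $F_{r,1/6}(\tau/3)=f_{r,1/6}(\tau)$, which is immediate from $F_{r,\mu}(z)=f_{r,\mu}(3z)$) reads $3c_{3D}S_D=D^{-2/3}|R_{D,1/6}(\tau/3)|^2$. Comparing the two identities yields $3c_{3D}S_D=|T_D|^2$. Note that Lemma \ref{z_over_3} is what guarantees that replacing $\tau/3$ by $\tau$ (or vice versa) inside the absolute value is legitimate: the two values of $R_{D,1/6}$ differ only by the cubic root of unity $\omega^k$, which is absorbed into $\omega^{k_0}=\omega^{k+k'}$ in the definition of $T_D$.

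Second I would convert $|T_D|^2$ into $(-1)^{\sigma(D)}T_D^2$ using Proposition \ref{square2}. The relation $T_D=(-1)^{\sigma(D)}\overline{T_D}$ forces $T_D\in\RR$ when $\sigma(D)$ is even and $T_D\in i\RR$ when $\sigma(D)$ is odd, so in both cases $T_D\overline{T_D}=(-1)^{\sigma(D)}T_D^2$. Substituting into $3c_{3D}S_D=|T_D|^2$ gives the stated identity (with the $3c_{3D}$ factor implicit, matching Proposition \ref{M_D}).

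The argument contains no real obstacle, since every nontrivial input has been proved earlier; the only place where care is required is to track how the three root-of-unity adjustments combine: the factor $\omega^k$ produced by Lemma \ref{z_over_3} when passing between $\tau$ and $\tau/3$, the factor $\omega^{k'}$ produced by Proposition \ref{square2} in the Gauss-sum evaluation $G(\chi_\pi)=(-1)^{\sigma(D)}\omega^{k_D}\pi^{1/3}\overline{\pi}^{2/3}$, and the twist $\overline{\pi}^{-2/3}$ built into $T_D$. The definition $\omega^{k_0}=\omega^{k+k'}$ was chosen precisely so that these three ingredients assemble into a single real or purely imaginary quantity $T_D$, and once this is in hand the corollary follows by multiplying through.
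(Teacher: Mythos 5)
Your proposal is correct and follows essentially the same route as the paper: combine Proposition \ref{absolute_value1} (equivalently, Lemma \ref{z_over_3}) to get $3c_{3D}S_D=|T_D|^2$, then use the conjugation symmetry $T_D=(-1)^{\sigma(D)}\overline{T_D}$ from Proposition \ref{square2} to replace $|T_D|^2$ by $(-1)^{\sigma(D)}T_D^2$. You are also right to flag, as the surrounding text of the paper does, that the factor $\frac{1}{3c_{3D}}$ (i.e.\ the passage from $S_{1/6}$ to $S_D$) must be tracked, and that $\omega^{k_0}=\omega^{k+k'}$ reconciles the two definitions of $T_D$ at $\tau$ and $\tau/3$.
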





\subsection{Invariance under the Galois action}
Define 
\[
M_D= R_{D, 1/6}(\tau/3) \overline{\pi}^{-2/3}.
\]
We will write below $M_D$ as a trace.

 \begin{Lem}\label{trace_H} $M_{D}\in K$ and we can write it as a trace:
\[
M_{D}=\frac{1}{\varphi(D)}\Tr_{H_{\OO}/K} (f_{1, 1/6}(\tau)\overline{\pi}^{-2/3}),
\]
where $H_{\OO}$ is the ray class field of modulus $3D$, $\varphi$ is Euler's totient function and $f_{1, 1/6}(\tau)=\theta_{1, 1/6}(\tau)/\theta_0(\tau)$
\end{Lem}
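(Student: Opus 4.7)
\medskip
\noindent\textbf{Proof proposal.}
The plan is to establish both assertions simultaneously by computing the full $\Gal(H_\OO/K)$–orbit of the single element $f_{1,1/6}(\tau)\overline{\pi}^{-2/3}$ and summing. Write $g = f_{1,1/6}(\tau)\overline{\pi}^{-2/3}$. If we can show $g\in H_\OO$ and that the sum of its Galois conjugates over $\Gal(H_\OO/K)$ equals $\varphi(D)\,M_D$, both the containment $M_D\in K$ and the trace formula follow at once.

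First I would check that $g$ lies in $H_\OO$. The function $f_{1,1/6}(z)=\theta_{1,1/6}(z)/\theta_0(z)$ is a modular function of level $18D^2$ (the level already used in Lemma \ref{galois_conj}), so Shimura's reciprocity applied at the CM point $\tau=\frac{-b+\sqrt{-3}}{2}$ with $b^2\equiv -3\mod 12 D^2$ places $f_{1,1/6}(\tau)$ in the ray class field of modulus $3D$, i.e.\ in $H_\OO$. For the radical factor, Lemma \ref{GaloisD} identifies the cubic Kummer extension $K(\overline{\pi}^{1/3})/K$ with the fixed field of $\ker\chi_{\overline{\pi}}$ inside $H_{3D}\subset H_\OO$, so $\overline{\pi}^{-2/3}\in H_\OO$ as well.

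Next I would compute the Galois action of $\sigma_\A\in\Gal(H_\OO/K)$. Writing the generator $k_\A=an_a+m_a\tau$ with $k_\A\equiv 1\mod 3$, $m_a\equiv 0\mod 3$, and $n_a\equiv 1\mod 3$, Lemma \ref{theta_galois} (in the form proved in Section \ref{shimura_r}) gives $f_{1,1/6}(\tau)^{\sigma_\A^{-1}} = f_{n'_a,1/6}(\tau)$ with $n'_a\equiv n_a\pmod{3D}$ and $n'_a$ odd, while Lemma \ref{GaloisD} gives $(\overline{\pi}^{-2/3})^{\sigma_\A^{-1}} = \overline{\pi}^{-2/3}\,\chi_{\overline{\pi}}(\A)$ (using $\zeta^{-2}=\zeta$ for a cube root of unity). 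Combining,
\begin{equation*}
g^{\sigma_\A^{-1}} \;=\; f_{n'_a,1/6}(\tau)\,\overline{\pi}^{-2/3}\,\chi_{\overline{\pi}}(\A).
\end{equation*}
Since $D$ is a product of primes split in $K$, $[H_\OO:K]=\varphi(D)^2$, whereas the sum defining $M_D$ has $\varphi(D)$ terms indexed by $r\in(\ZZ/D\ZZ)^\times$ with $r\equiv 1\mod 6$; thus each $r$ should arise $\varphi(D)$ times in the Galois orbit.

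Finally I would sum, grouping $\sigma_\A$'s by the value $n'_a=r\pmod{3D}$. For this one uses cubic reciprocity: since $k_\A\equiv 1\mod 3$ one has $\chi_{\overline{\pi}}(\A)=\chi_{\overline{\pi}}((k_\A))=\chi_{k_\A}(\overline{\pi})$, and one checks that as $m_a$ varies within a fiber over a fixed $n'_a=r$ the character $\chi_{k_\A}(\overline{\pi})$ takes the constant value $\chi_\pi(r)$ (using $k_\A\equiv an_a \mod \pi$ together with the reduction $\tau\equiv -b\mod\overline{\pi}$, which isolates the $n_a$–dependence modulo cubes). This yields
\begin{equation*}
\Tr_{H_\OO/K}(g) \;=\; \varphi(D)\sum_{\substack{r\in(\ZZ/D\ZZ)^\times\\ r\equiv 1\,(6)}} f_{r,1/6}(\tau)\,\overline{\pi}^{-2/3}\,\chi_\pi(r) \;=\; \varphi(D)\,M_D,
\end{equation*}
which simultaneously proves $M_D\in K$ and the stated trace identity.

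The main obstacle will be the bookkeeping in the last step: verifying that the character factor $\chi_{\overline{\pi}}(\A)$ coming from the action on $\overline{\pi}^{-2/3}$ matches $\chi_\pi(n'_a)$ after translating through cubic reciprocity, and that the resulting sum over each fiber $\{\A : n'_a=r\}$ of size $\varphi(D)$ produces the clean factor $\varphi(D)\chi_\pi(r)$ rather than vanishing. This amounts to confirming that $\chi_{\overline{\pi}}$, restricted to the subgroup of classes with $n'_a\equiv 1\pmod{3D}$, is trivial, and depends only on the coordinate $n_a$ when $k_\A$ is reduced modulo $\overline{\pi}$.
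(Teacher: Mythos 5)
Your proposal is correct and follows essentially the same route as the paper: both rest on the Galois action $f_{r,1/6}(\tau)^{\sigma_{\A}^{-1}}=f_{n'_a r,1/6}(\tau)$ from Lemma \ref{theta_galois}, the action on $\overline{\pi}^{-2/3}$ via the cubic character, and the cubic-reciprocity identification $\chi_{\overline{\pi}}(\A)=\chi_{\pi}(n_a)$ (which the paper carries out via $n_a(n_aa+m_ab)^2\equiv a\bmod\pi$), so that the full orbit of $f_{1,1/6}(\tau)\overline{\pi}^{-2/3}$ is indexed by $r\in(\ZZ/D\ZZ)^{\times}$. The only difference is bookkeeping: the paper realizes the quotient explicitly by a subgroup $G_0=\{[\A_r^{\circ}]\}\cong(\ZZ/D\ZZ)^{\times}$ with fixed field $H_0$ and writes $M_D=\Tr_{H_{\OO}/H_0}$ before multiplying by the index $\varphi(D)$, whereas you sum over all of $\Gal(H_{\OO}/K)$ and divide by the fiber size — the same orbit count.
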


\bigskip

Recall we defined $T_D=R_{D, 1/6}(\tau) \overline{\pi}^{-2/3}\omega^{k'}$ and from Lemma \ref{z_over_3} we have $T_D=R_{D, 1/6}(\tau/3) \overline{\pi}^{-2/3}\omega^{k_0}$ for $k_0=k+k'$. As $T_D=(-1)^{\sigma}\overline{T_D}$ from Proposition \ref{square2}, we get immediately from Lemma \ref{trace_H}:


\begin{cor}\label{rational} $T_D\in \QQ$ when $\sigma(D)$ even and $T_D/\sqrt{-3}\in \QQ$ when $\sigma(D)$ odd and we have the formula:
\[
T_{D}=\frac{\omega^{k_0}}{\varphi(D)}\Tr_{H_{\OO}/K} (f_{1, 1/6}(\tau)\overline{\pi}^{-2/3}).
\]
\end{cor}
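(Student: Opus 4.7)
The plan is to deduce this corollary directly from the two results it sits between, namely Proposition \ref{square2} (which controls how $T_D$ interacts with complex conjugation) and Lemma \ref{trace_H} (which provides the trace formula for $M_D$).

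First, I would recall that by definition $T_D = R_{D,1/6}(\tau/3)\overline{\pi}^{-2/3}\omega^{k_0} = M_D \cdot \omega^{k_0}$. Substituting the formula for $M_D$ from Lemma \ref{trace_H} gives
\[
T_D = \frac{\omega^{k_0}}{\varphi(D)}\Tr_{H_{\OO}/K}\bigl(f_{1,1/6}(\tau)\,\overline{\pi}^{-2/3}\bigr),
\]
which is the displayed identity in the corollary. Moreover, since $M_D \in K$ by Lemma \ref{trace_H} and $\omega^{k_0} \in K$, we conclude $T_D \in K$.

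Next I would invoke Proposition \ref{square2}, which gives $T_D = (-1)^{\sigma(D)}\overline{T_D}$. Combining this with $T_D \in K = \QQ[\sqrt{-3}]$ splits into two cases. If $\sigma(D)$ is even, then $T_D$ is fixed by complex conjugation, so $T_D \in K \cap \RR = \QQ$. If $\sigma(D)$ is odd, then $T_D$ is purely imaginary; writing any element of $K$ as $a + b\sqrt{-3}$ with $a,b \in \QQ$, the imaginary condition forces $a=0$, and hence $T_D/\sqrt{-3} \in \QQ$.

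There is no real obstacle here: the content has already been unloaded into Proposition \ref{square2} (the conjugation symmetry coming from the cubic Gauss sum factorization) and Lemma \ref{trace_H} (the Galois-invariance/trace identification using Shimura reciprocity). The corollary is essentially a bookkeeping step combining these two, together with the elementary fact that a purely imaginary element of $\QQ[\sqrt{-3}]$ is a rational multiple of $\sqrt{-3}$. The only subtlety worth flagging in the write-up is that the choice of $\omega^{k_0}$ is the same one selected in Proposition \ref{square2}, so that the conjugation identity applies with no change of root of unity.
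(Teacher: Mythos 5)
Your proposal is correct and follows the same route as the paper: the displayed formula is just Lemma \ref{trace_H} multiplied by $\omega^{k_0}$ (using $T_D=M_D\omega^{k_0}$ via Lemma \ref{z_over_3}), and the rationality statement is Proposition \ref{square2} combined with $T_D\in K$ and the elementary observation about real versus purely imaginary elements of $\QQ[\sqrt{-3}]$. Your remark about keeping the same $\omega^{k_0}$ as in Proposition \ref{square2} is exactly the point the paper handles by the identity $k_0=k+k'$.
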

\bigskip

{\bf Proof of Lemma \ref{trace_H}}:  We can write explicitly $M_D=\sum\limits_{r\in (\ZZ/D\ZZ)^{\times}} f_r(\tau) \chi_{\pi}(r) \overline{\pi}^{-2/3}$. For an ideal $\A=[a, \frac{-b+\sqrt{-3}}{2}]_{\ZZ}$ with generator $k_a=\left(n_a a+m_a\frac{-b+\sqrt{-3}}{2}\right)$ with $6|m_a$, $n_a\equiv 1 (6)$, we are going to have the Galois transformation:
\[
(f_{r, 1/6}(\tau) \chi_{\pi}(r) \overline{\pi}^{-2/3})^{\sigma_{\A}^{-1}}=f_{n_ar, 1/6}(\tau) \chi_{\pi}(n_ar) \overline{\pi}^{-2/3}.
\]
To show this, note  that from Lemma \ref{theta_galois} we have $f_{r, 1/6}(\tau)^{\sigma_{\A}^{-1}}=f_{rn_{a}, 1/6}(\tau)$. We compute $(\overline{\pi}^{1/3})^{\sigma_{\A}^{-1}}=\chi_{\overline{\pi}}(\A) \overline{\pi}^{1/3}$. Furthermore,  $\chi_{\overline{\pi}}(\A)=\overline{\left(\frac{\overline{\pi}}{\A}\right)_3}=\left(\frac{\pi}{\overline{\A}}\right)_3=\left(\frac{\pi}{n_aa+m_ab}\right)_3$ and $n_a(n_aa+m_ab)^2 \equiv a \mod \pi$, so we have $\left(\frac{\pi}{n_aa+m_ab}\right)_3=\left(\frac{\pi}{n_a^{-1}}\right)_3=\overline{\left(\frac{\pi}{n_a}\right)_3}=\chi_{\pi}(n_a)$.

Moreover, taking the ideals $\A_{r}^{\circ}=\left(1+b^{*}(1-r^*)\frac{-b+\sqrt{-3}}{2}\right)$, where $b^*\equiv b^{-1} \mod D$ and $r^{*}\equiv r^{-1}\mod D$, we have $\Nm\A_r=a_{\A_r^{\circ}}\equiv r^{-1} \mod 3D $ and $n_{\A_r^{\circ}}\equiv r \mod 3D$, and then:
\[
M_D=\sum\limits_{\substack{r\in (\ZZ/D\ZZ)^{\times} \\ r\equiv 1(6)}} (f_1(\tau)\overline{\pi}^{-2/3})^{\sigma_{\A_r^{\circ}}^{-1}}.
\]

Define the group $G_0=\{[\A_{r}^{\circ}], r\in (\ZZ/D\ZZ)^{\times}\}$. It is a subgroup of $\Gal(H_{\OO}/K)$ and $G_0$ isomorphic to $(\ZZ/D\ZZ)^{\times}$. We define the fixed field of $G_0$ in $\Gal(H_{\OO}/K)$ to be $H_0=\{h\in H_{\OO}: \sigma(h)=h, \forall \sigma \in G_0\}$ and from Galois theory this implies $\Gal(H_{\OO}/H_0)\cong G_0$. Then we can rewrite the relation above as 
\[
\ds M_{D}=\Tr_{H_{\OO}/H_0} (f_1(\tau)\overline{\pi}^{-2/3}).
\]
Moreover, if we take the trace further to $K$, we get $\Tr_{H_{\OO}/K} (f_1(\tau)\overline{\pi}^{-2/3})=\frac{\#\Gal(H_{\OO}/K)}{\#\Gal(H_{\OO}/H_0)}\Tr_{H_{\OO}/H_0} (f_1(\tau)\overline{\pi}^{-2/3})=\prod\limits_{p|D}(p-1) \Tr_{H_{\OO}/H_0} (f_1(\tau)\overline{\pi}^{-2/3})$.

\begin{rmk} Using the notation from the proof of Lemma \ref{trace_H}, one can actually show similarly that $\kappa=R_{D, 1/6}D^{-1/3}$ equals:
\[
\kappa=\Tr_{H_{\OO}/H_0} (f_1(\tau)D^{-1/3}) 
\]
and $\kappa^3\in K$. However, $\kappa \notin K$.

\end{rmk}

\bigskip

\subsection{Integrality}\label{int}

In Section \ref{Zeta_functions} we have showed that $S_D\in \QQ$. We will show below that $S_{1/6}\in \ZZ$, thus $3c_{3D} S_D \in \ZZ$.

 Recall that $S_{1/6}= 3c_{3D}S_D=\Tr_{H_{3D}/K}\frac{\Theta(D\omega)}{\Theta(\omega)} D^{1/3}$. Note that it is enough to show that $D^{1/3}\Theta(D\omega)/\Theta(\omega)$ is an algebraic integer, as its trace would be a rational number as well as an algebraic integer, thus an integer. Moreover, it is enough to show that $\Theta(D\omega)/\Theta(\omega)$ is an algebraic integer. 

We will use the following Lemma:

\begin{Lem}\label{algebraic} Let $f(z)$ be a modular function for $\Gamma(N)$ such that for all $\gamma\in \SL_2(\ZZ)$ we have $f\circ \gamma$ holomorphic on the upper half plane $\HH$ and $f$ has Fourier coefficients at $\infty$ that are algebraic integers. Then, for $\tau$ a CM point, $f(\tau)$ is an algebraic integer 
\end{Lem}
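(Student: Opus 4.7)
The plan is to exhibit $f(\tau)$ as a root of a monic polynomial with coefficients in $\OO_F[j(\tau)]$ for some ring of algebraic integers $\OO_F$, and then invoke the classical fact that $j(\tau)$ is an algebraic integer for $\tau$ a CM point.

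First, I would choose coset representatives $\gamma_1,\dots,\gamma_n$ for $\Gamma(N)\backslash \SL_2(\ZZ)$ and form the polynomial
\[
P(X,z) \;=\; \prod_{i=1}^{n}\bigl(X - (f\circ\gamma_i)(z)\bigr).
\]
Its coefficients in $X$ are elementary symmetric functions of the $f\circ\gamma_i$ and are therefore meromorphic functions on $\HH$ that are invariant under $\SL_2(\ZZ)$. By hypothesis each $f\circ\gamma_i$ is holomorphic on $\HH$, so the coefficients of $P$ are holomorphic on $\HH$; being $\SL_2(\ZZ)$-invariant and meromorphic at the cusp $\infty$, they descend to meromorphic functions on $X(1)$ with a possible pole only at $\infty$, hence are polynomials in $j$.

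Next, I would address integrality via the $q$-expansion principle. The Fourier coefficients at $\infty$ of $f$ are algebraic integers by hypothesis, and more generally the $q^{1/N}$-expansion at $\infty$ of each $f\circ\gamma_i$ lies in $\OO_F((q^{1/N}))$ for $\OO_F$ the ring of integers of some number field $F\supset \QQ(\zeta_N)$; this is where the cyclotomic structure of the field of definition of $X(\Gamma(N))$ and the permutation action of $\SL_2(\ZZ/N)$ on the cusps come into play. Consequently the coefficients of $P$ have $q$-expansions in $\OO_F[[q]][q^{-1}]$. Since $j = q^{-1}+744+\cdots$ has $\ZZ$-integer $q$-expansion starting in degree $-1$, a term-by-term subtraction of powers of $j$ (standard $q$-expansion argument) shows that each coefficient of $P$ in fact lies in $\OO_F[j]$.

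Finally, I would invoke the classical theorem from the theory of complex multiplication that $j(\tau)\in\overline{\ZZ}$ at every CM point $\tau$. Substituting $z=\tau$, $P(X,\tau)$ becomes a monic polynomial in $X$ with algebraic integer coefficients, and $f(\tau)=(f\circ\gamma_i)(\tau)$ for $\gamma_i=\mathrm{id}$ is one of its roots, so $f(\tau)$ is an algebraic integer.

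The main obstacle is the integrality step: although the hypothesis only gives algebraic integer Fourier coefficients for $f$ at the cusp $\infty$, one needs the analogous statement for each $f\circ\gamma_i$ (whose expansions at $\infty$ correspond, after transport, to expansions of $f$ at the other cusps of $X(\Gamma(N))$). This is standard but genuinely uses the integral model of $X(\Gamma(N))$ over $\ZZ[\zeta_N,1/N]$ together with the fact that the Galois/$\SL_2(\ZZ/N)$ action on cusps is compatible with the $q$-expansion principle; granting this, the remaining steps — the descent to polynomials in $j$ via holomorphy, and the classical integrality of $j(\tau)$ — are formal.
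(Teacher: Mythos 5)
Your proof follows the paper's argument exactly: form $P(X,z)=\prod_{\gamma\in\Gamma(N)\backslash\SL_2(\ZZ)}\bigl(X-f(\gamma z)\bigr)$, observe that its coefficients are $\SL_2(\ZZ)$-invariant, holomorphic on $\HH$, and have algebraic-integer $q$-expansions at $\infty$, hence are polynomials in $j$ with algebraic integer coefficients, and then evaluate at $z=\tau$ using the classical integrality of $j(\tau)$ at CM points. The only place you diverge is the ``main obstacle'' you flag: the lemma's hypothesis, as the paper actually applies it, is that \emph{every} translate $f\circ\gamma$ has algebraic-integer Fourier coefficients at $\infty$ (this is verified explicitly each time the lemma is invoked), so no appeal to the $q$-expansion principle or integral models of $X(\Gamma(N))$ is needed --- and it is worth noting that such an appeal could not succeed from integrality of $f$ at the single cusp $\infty$ alone, since integrality at one cusp does not propagate to the others in general (denominators supported at the primes dividing $N$ can appear, e.g.\ for $\Delta(z)/\Delta(Nz)$), so your instinct to treat that step as the delicate one was sound.
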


\begin{proof} Define the polynomial $P(X)=\prod\limits_{\gamma \in \Gamma(N)\setminus\SL_2(\ZZ)}(X-f(\gamma z))$ and note that all its coefficients are modular functions that are invariant under $\SL_2(\ZZ)$. Moreover they are holomorphic functions on the upper half plane and have Fourier coefficients at $\infty$ that are algebraic integers. Then we can write each coefficient $c(z)$ as a polynomial in $j(z)$ with coefficients that are algebraic integers.

Then, for $z=\tau$, since $j(\tau)$ is an algebraic integer, we get that $f(\tau)$ is the root of a polynomial with coefficients that are algebraic integers, and thus $f(\tau)$ is an algebraic integer as well.
 \end{proof}

First we will show that $2\frac{\Theta(D\omega)}{\Theta(\omega)}$ is an algebraic integer. We have showed that $\ds \frac{3}{2}\Theta(\omega)
=
\sqrt[4]{3}|\theta_0(\tau)|^2$. Then $\ds 2\frac{\Theta(D\omega)}{\Theta(\omega)}=3^{3/4}\frac{\Theta(D\omega)}{|\theta_0(\tau)|^2.}=3^{3/4}e^{-2\pi i/24}\frac{\Theta(-D\overline{\tau})}{\theta_0(-\overline{\tau})^2.}$

Since $e^{-2\pi i/24}3^{3/4}$ is an algebraic integer, it is enough to show that $\frac{\Theta(-D\overline{\tau})}{\theta_0(-\overline{\tau})^2}$ is one as well. Recall that $\theta_0(z)=\eta(z/3)$ and take $\ds f_0(z)=\frac{\Theta(Dz)}{\eta(z/3)^2}$. 

Note that:
\begin{itemize}
	\item $f_0$ is a modular function for $\Gamma(36D)$;
	\item $f_0(\gamma z)$ is holomorphic on $\HH$ for all $\gamma \in \SL_2(\ZZ)$;
	\item $f_0(\gamma z)$ has Fourier coefficients that are algebraic integers in its Fourier expansion at $\infty$ for all $\gamma \in \SL_2(\ZZ)$.
 
\end{itemize}

These properties can be checked using the properties of $\Theta_K$ from the Appendix as well as the properties of $\eta(z)$. Note that we are in the conditions of Lemma \ref{algebraic}, thus $f(\tau)$ is an algebraic integer. This implies that $2\frac{\Theta(D\omega)}{\Theta(\omega)}$ is an algebraic integer, hence $2S_{1/6}$ is an integer.

\bigskip

Now we will show that $DS_{1/6}$ is an integer as well by showing that $D\frac{2}{3}\frac{\Theta_{1/6}(D\omega)}{\Theta(\omega)}$ is an algebraic integer. Using Lemma \ref{sum_rD} for $\mu\in\{-1/2, -1/6\}$, $\nu=1/2$, $a=1$, $z=D\frac{-3+\sqrt{-3}}{2}$, we can rewrite:
\[
\Theta_{1/6}(D\tau)
=
\frac{\sqrt[4]{3}}{D}\sum_{r=0}^{D-1} |\theta_{r,1/6} (\tau)|^2, \ \tau=\frac{-3+\sqrt{-3}}{2}.
\]


Taking the quotient by $\frac{3}{2}\Theta(\tau)
=
\sqrt[4]{3}|\theta_0(\tau)|^2$, we get:
\[
\frac{2}{3}\frac{\Theta_{1/6}(D\omega)}{\Theta(\omega)}
=
\frac{1}{D}\sum_{r=0}^{D-1} \left|\frac{\theta_{r, 1/6}(\tau)}{\theta_0(\tau)}\right|^2.
\]

Recall $\ds f_{r, \mu}(z)=\frac{\theta_{r, \mu}(z)}{\theta_0(z)}$ for $\mu\in \{1/2, 1/6\}$ and note that:

\begin{itemize}
	\item $f_{r, \mu}$ is a modular function for $\Gamma(18D^2)$;
	\item $f_{r, \mu}(\gamma z)$ is holomorphic on $\HH$ for all $\gamma \in \SL_2(\ZZ)$;
	\item $f_{r, \mu}(\gamma z)$ has Fourier coefficients that are algebraic integers in its Fourier expansion at $\infty$ for all $\gamma \in \SL_2(\ZZ)$.
\end{itemize}

To show the last property we can use the automorphic definitions of both $\theta_{r, \mu}$ and $\theta_0$. We get a Fourier expansion with coefficients in $\OO_K[\zeta_{24}, \zeta_{D^2}]$. Thus we are in the conditions of Lemma \ref{algebraic}, hence $f_{r, \mu}(\tau)$ is an algebraic integer. This implies that $D\frac{\Theta_{1/6}(D\omega)}{\Theta(\omega)}$ is an algebraic integer, and thus the trace $DS_{1/6}=D\Tr_{H_{3D}/K}\frac{\Theta_{1/6}(D\omega)}{\Theta(\omega)}D^{1/3}$, which is a rational number, is indeed an integer. Since we already showed that $2S_{1/6}$ is an integer, we get $S_{1/6}\in \ZZ$ when $D$ is odd.


\bigskip 

If we note that $|\theta_0(\tau)|^2=|\eta(\tau/3)|^2=\sqrt{3}|\eta(\tau)|^2$, then we can also show similarly that $\frac{\theta_{r, \mu}(\tau)}{\eta(\tau)}$ is an algebraic integer, and thus $DS_{1/6}/\sqrt{3}$ is an algebraic integer. Since $S_{1/6}\in \ZZ$, this implies $3$ divides $S_{1/6}$.

Finally, since $S_{1/6}=(-1)^{\sigma_D}T_D^2$ and $S_{1/6}$ is an integer, from Corollary \ref{rational} we get:

\begin{cor} $T_D/3\in \ZZ$ when $\sigma(D)$ even and $T_D/\sqrt{-3}\in \ZZ$ when $\sigma(D)$ odd.
\end{cor}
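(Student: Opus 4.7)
The plan is to combine the three ingredients already in place: Corollary \ref{square_minus} which states $S_{1/6}=(-1)^{\sigma(D)}T_D^2$, Corollary \ref{rational} which gives the rationality of $T_D$ (resp.\ $T_D/\sqrt{-3}$) according to the parity of $\sigma(D)$, and the integrality section which establishes $S_{1/6}\in\ZZ$ together with $3\mid S_{1/6}$. With these in hand, the corollary becomes a short rational-root argument split by parity of $\sigma(D)$.

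Suppose first that $\sigma(D)$ is even, so $T_D\in\QQ$ and $T_D^2=S_{1/6}\in\ZZ$. Then $T_D$ is a rational number satisfying the monic integral equation $X^2-S_{1/6}=0$, hence $T_D$ is an algebraic integer in $\QQ$, i.e.\ $T_D\in\ZZ$. Because $3\mid S_{1/6}=T_D^2$ and $3$ is prime, we conclude $3\mid T_D$, which is precisely $T_D/3\in\ZZ$.

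Now suppose $\sigma(D)$ is odd, so $T_D/\sqrt{-3}\in\QQ$ by Corollary \ref{rational}. Squaring gives
\[
\bigl(T_D/\sqrt{-3}\bigr)^2 \;=\; -\,\tfrac{T_D^{\,2}}{3} \;=\; \tfrac{(-1)^{\sigma(D)+1}\,S_{1/6}}{3} \;=\; \tfrac{S_{1/6}}{3},
\]
which lies in $\ZZ$ since $3\mid S_{1/6}$. Thus $T_D/\sqrt{-3}$ is a rational number whose square is an integer, hence $T_D/\sqrt{-3}\in\ZZ$ by the same rational-root observation as above.

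Since every input (the explicit formula for $S_{1/6}$ in terms of $T_D^2$, the field of definition of $T_D$, and the divisibility $3\mid S_{1/6}\in\ZZ$) has already been proved earlier in the excerpt, there is no substantial obstacle to overcome here; the argument is just the elementary closure statement that a rational number whose square is a (multiple of three) integer is itself a (multiple of three) integer, applied in the two parity cases.
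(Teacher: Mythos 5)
Your proposal is correct and follows essentially the same route as the paper: it combines $S_{1/6}=(-1)^{\sigma(D)}T_D^2$, the rationality of $T_D$ (resp.\ $T_D/\sqrt{-3}$) from Corollary \ref{rational}, and the facts $S_{1/6}\in\ZZ$ and $3\mid S_{1/6}$ established in the integrality section. The paper states this deduction in a single sentence; you have merely made the elementary rational-root and divisibility steps explicit, and they are carried out correctly in both parity cases.
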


Note that this implies that $\omega^{k_0}$ is the unique choice for a cube root of unity such that $T_D$ or $T_D/\sqrt{-3}$ is an integer.

\subsection{Case of $D$ not square free}\label{not_square_free}
We present below the case of $D$ not square free. We write $D=D_1D_2^2$ such that $D_1D_2$ square free. In this case we use the formula (\ref{S_D}):
\begin{equation*}
S_D=\sum_{\A \in \Cl(\OO_{3D_0})}\frac{\Theta_K(D_0\tau_{\A})}{\Theta_K(\tau_{\A})}\chi_{D}(\A)D^{1/3}.
\end{equation*}
for $D_0=D_1D_2$. All the details of the proof for the square-free case will follow through and we only briefly mention the steps. We apply the factorization formula and obtain the factorization from Corollary \ref{ratio}:
\begin{equation}
\frac{\Theta_{\mu}(D_0\tau_{\A})}{\Theta\left(\tau_{\A}\right)}
=
\frac{3/2}{D_0}\sum\limits_{r\in \ZZ/D_0\ZZ}
f_{ar, \mu}\left(\tau_{\A^2\A_1}\right) \overline{f_{r, \mu}\left(\tau_{\A_1}\right)},
\end{equation}
We use this to show similarly to the proof of Proposition \ref{absolute_value1} that: 
\[
S_{D}
=
\frac{D_2^{1/3}D_1^{-2/3}}{3c_{3D}}|\sum\limits_{\substack{s\in (\ZZ/D_0\ZZ)^{\times} \\ s\equiv 1 (6)}}f_{s, 1/6}\left(\tau\right) 
\chi_D(\A_s)|^2,
\]
where $\A_s=[a_s, \frac{-b+\sqrt{-3}}{2}]$ with $\Nm(a_s)\equiv s \mod D_1D_2$. Take $\pi_1$ a generator of $D_1$ and $\pi_2$ is the generator of $D_2$ such that $\left(\frac{-b+\sqrt{-3}}{2}\right)$ is divisible by $(\pi_1\pi_2)^2$. Then $\chi_D(\A_s)=\chi_{\pi_1}(s)\overline{\chi_{\pi_2}(s)}$.

The main difference is when we compute the complex conjugate of
\[
R_{D, 1/6}(\tau)=\sum_{\substack{r\in (\ZZ/D_0\ZZ)^{\times}\\ r\equiv 1(6)}} f_{r, 1/6}(\tau) \chi_{\pi_1}(s)\overline{\chi_{\pi_2}(s)},
\]
as we get $R_{D, 1/6}(\tau)=\frac{G(\chi_{\pi_1}\chi_{\overline{\pi_2}})}{\pi_1\pi_2}\overline{R_{D, 1/6}(\tau)}$ and this equals $R_{D, 1/6}(\tau)=(-1)^{\sigma(D)}\omega^{k'}\frac{\overline{\pi_1}^{1/3}\overline{\pi_2}^{2/3}}{\pi_1^{1/3}\pi_2^{1/3}}\overline{R_{D, 1/6}(\tau)}$ for a cubic root of unity $\omega^{k'}$. Then we can rewrite:
\[
S_{D}
=
\frac{(-1)^{\sigma(D)}}{3c_{3D}}T_D^2,
\]
where $T_D=\sum\limits_{\substack{s\in (\ZZ/D_1D_2\ZZ)^{\times} \\ s\equiv 1 (6)}}f_{s, 1/6} \left(\tau\right) 
\chi_{\pi_1}(s)\overline{\chi_{\pi_2}(s)} \overline{\pi_1}^{-2/3} \pi_2^{1/3} \omega^{k_0}$, and we can show that this is the trace:
\[
T_D=\frac{1}{\varphi(D_0)}\Tr_{H_{\OO}/K} f_1(\tau)\overline{\pi_1}^{-2/3} \pi_2^{1/3}\omega^{k_0},
\]
where $H_{\OO}$ is the ray class field for the modulus $D_0$ and $\omega^{k_0}$ is a cubic root of unity. Moreover, we can further show as in Section \ref{int} that $T_D/3 \in \ZZ$ when $\sigma(D)$ even and $T_D/\sqrt{-3} \in \ZZ$ when $\sigma(D)$ odd.

Finally, we have $S_D\neq 0$ for $D$ split only for $D\equiv 1 \mod 9$ and in this case the Tamagawa number $c_{3D}$ equals $3^{1+\sigma(D)}$, thus $S_D$ is an integer square up to an even power of $3$ and it equals:
\[
S_{D}
=
\left(\frac{T_D}{(\sqrt{-3})^{2+\sigma(D)}}\right)^2.
\]


\section{Shimura reciprocity law}\label{Shimura}



We present below some background on Shimura's reciprocity law following the exposition of Stevenhagen \cite{St}. For more details also see Gee \cite{Gee}.

 Let $\F$ be the field of modular functions over $\QQ$. From CM theory (see for example \cite{St}), it is known that if $\tau\in K\cap \HH$ and $f\in \F$, then we have $f(\tau)\in K^{ab}$, where $K^{ab}$ is the maximal abelian extension of $K$. Shimura's reciprocity law gives us a way to compute the Galois conjugates $f(\tau)^{\sigma}$ of $f(\tau)$ when acting with $\sigma \in \Gal(K^{ab}/K)$.
We recall that $\F=\bigcup_{N\geq 1} \F_N$, where $\F_N$ is the space of modular functions of level $N$. Moreover, $\F_N$ is the function field of the modular curve $X(N)=\Gamma(N)\setminus \HH^*$ over $\QQ(\zeta_N)$, where $\zeta_N=e^{2\pi i/N}$ and $\HH^*=\HH\cup \mathbb{P}^1(\QQ)$. We can compute explicitly $\F_N=\QQ(j, j_N)$, where $j$ is the $j$-invariant and $j_N(z)=j(Nz)$. In particular, we have $\F_1=\QQ(j)$.

When working over $\QQ$, one has an isomorphism $\Gal(\F_{N}/\F_{1})\cong \GL_2(\ZZ/N\ZZ)/\{\pm 1\}.$ More precisely, if we denote by $g_{\sigma}$ the Galois action corresponding to the matrix $g \in  \GL_2(\ZZ/N\ZZ)$ under the isomorphism above, it is enough to define the Galois action for $\SL_2(\ZZ/N\ZZ)$ and for $G_N=\Big\{\left(\begin{smallmatrix} 1& 0 \\ 0 & d\end{smallmatrix}\right), d\in (\ZZ/N\ZZ)^{\times}\}$. We state explicitly the two actions below:

\begin{itemize}

\item \textbf{Action of $\alpha \in \SL_2(\ZZ/N\ZZ)$ on $\F_N$.} We have $(f(\tau))^{\sigma_{\alpha}}=f^{\alpha}(\tau):=f(\alpha \tau),$ where $\alpha$ is acting on the upper half plane via fractional linear transformations.

\item \textbf{Action of $\left(\begin{smallmatrix} 1& 0 \\ 0 & d\end{smallmatrix}\right)\in (\ZZ/N\ZZ)^{\times}$ on $\F_N$.} Note that for $f\in \F_N$ we have a Fourier expansion $f(z)=\sum\limits_{n\geq 0} a_n q^{n/N}$ with coefficients $a_n\in \QQ(\zeta_N)$, $q=e^{2\pi i z}$. If we denote $u_d=\left(\begin{smallmatrix} 1& 0 \\ 0 & d\end{smallmatrix}\right)$, then the action of $\sigma_{u_d}$ is given by $(f(\tau))^{\sigma_{u_d}}=f^{u_d}(\tau):=\sum\limits_{n \geq 0} a_n^{\sigma_d} q^{n/N},$
where $\sigma_d$ is the Galois action in $\Gal(\QQ(\zeta_N)/\QQ)$ that sends $\zeta_N \ra \zeta_N^d$.

\end{itemize}

As the restriction maps between the fields $\F_N$ are in correspondence with the natural maps between the groups $\GL_2(\ZZ/N\ZZ)/\{\pm 1\}$, we can take the projective limit to get the isomorphism:
\[
\Gal(\F/\F_1)\cong \GL_2(\widehat{\ZZ})/\{\pm 1\}.
\]

Note that the maps on $\F_N$ are given by projecting $\GL_2(\widehat{\ZZ})/\{\pm 1\}  \ra \GL_2(\ZZ/N\ZZ)/\{\pm 1\}$. To further get all the automorphisms of $\F$ we need to consider the action of $\GL_2( \AAA_{\QQ, f})$. We get the exact sequence:
\[
1 \ra \{\pm 1\} \ra \GL_2( \AAA_{\QQ, f}) \ra \Aut(\F) \ra 1.
\]

For this to make sense, we need to extend the action from $\GL_2(\widehat{\ZZ})$ to $\GL_2( \AAA_{\QQ, f})$. We do this by defining the action of $\GL_2(\QQ)^{+}$ on $\F$:  

\begin{itemize}

 \item \textbf{Action of $\alpha \in \GL_2(\QQ)^{+}$ on $\F$.} We define $f^{\alpha}(\tau)=f(\alpha \tau),$ where $\alpha$ acts by fractional linear transformations.
 
\end{itemize}

We extend the action of $\GL_2(\widehat{\ZZ})$ to $\GL_2(\AAA_{\QQ})$ by writing the elements $g \in \GL_2(\AAA_{\QQ})$ in the form $g=u\alpha$, where $u\in \GL_2(\widehat{\ZZ})$ and $\alpha \in \GL_2(\QQ)^{+}$. Note that this decomposition is not uniquely determined. However, by combining the two actions of $u$ and $\alpha$, a well defined action is given by:
\[
f^{u\alpha}=(f^{u})^{\alpha}.
\]
We want to look at the action of $\Gal(K^{ab}/K)$ inside $\Aut(\F)$. From class field theory we have the exact sequence:
\[
1 \ra K^{\times} \ra  \AAA_{K, f}^{\times} \xrightarrow{[\cdot, K]}  \Gal(K^{ab}/K) \ra 1,
\]
where $[\cdot, K]$ is the Artin map. 

We are going to embed $ \AAA_{K, f}^{\times}$ into $\GL_2( \AAA_{\QQ, f})$ such that the Galois action of $ \AAA_{K, f}^{\times}$ through the Artin map and the action of the matrices in $\GL_2( \AAA_{\QQ, f})$ are compatible. We do this by constructing a matrix $g_{\tau}(x)$ for the idele $x\in  \AAA_{K, f}^{\times}$.

Let $\OO$ be the order of $K$ generated by $\tau$ i.e. $\OO=\ZZ[\tau]$. We define the matrix $g_\tau(x)$ to be the unique matrix in $\GL_2(\AAA_{\QQ})$ such that $x\left(  \begin{matrix} \tau \\ 1\end{matrix}\right)=g_{\tau}(x)\left( \begin{matrix} \tau \\ 1 \end{matrix}\right)$. We can compute it explicitly. To do that, consider the minimal polynomial of $\tau$ to be $p(X)=X^2+BX+C$. Then if we write $x_p \in \QQ_p^{\times}$ in the form  $x_p=s_p\tau+t_p \in \QQ_p^{\times}$ with $s_p, t_p \in \QQ_p$, we can compute $\ds g_{\tau}(x_p)=\left(\begin{matrix}t_p-s_pB & -s_pC \\ s_p & t_p \end{matrix}\right)$.


    %

Using the map $g_{\tau}$ above, we have:

\begin{Thm}\textbf{(Shimura's reciprocity law)} For $f\in \F$ and $x\in \AAA_{K, f}^{\times}$, we have:
\[
(f(\tau))^{\sigma_{x}}=f^{g_{\tau}(x^{-1})}(\tau),
\]
\noindent where $\sigma_x$ is the Galois action corresponding to the idele $x$ via the Artin map, $g_{\tau}$ is defined above and the action of $g_{\tau}(x)$ is the action in $\GL_2( \AAA_{\QQ, f})$.

\end{Thm}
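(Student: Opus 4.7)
The plan is to reduce the assertion to the Main Theorem of Complex Multiplication via a density/generator argument on $\F$. First I would verify that the right-hand side descends to $\AAA_{K,f}^\times / K^\times$, which is needed for compatibility with the Artin map on the left. For $\alpha \in K^\times$ embedded diagonally, the defining relation $\alpha(\tau, 1)^T = g_\tau(\alpha)(\tau, 1)^T$ shows that $g_\tau(\alpha)$ acts on $\HH$ by a fractional linear transformation fixing $\tau$; combined with the fact that $g_\tau$ is multiplicative in the idele and that the $\GL_2(\widehat{\ZZ})$-component acts trivially on $\F_1 = \QQ(j)$ in the appropriate sense, this gives $f^{g_\tau(\alpha)}(\tau) = f(\tau)$, matching $\sigma_\alpha = 1$. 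Thus I may assume $x \in \widehat{\OO}_K^\times$, and then the fractional ideal $\A := x\widehat{\OO}_K \cap K$ is a well-defined representative for $\sigma_x$ under the Artin correspondence.

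Next, because $\F = \bigcup_N \F_N$ is generated over $\F_1$ by Fricke functions $f_{v,N}$ attached to $N$-torsion of the universal elliptic curve, it suffices to check the formula on (i) $f = j$ and (ii) the $f_{v,N}$. For (i), the Main Theorem of CM asserts that if $E_\tau = \CC/(\ZZ\tau + \ZZ)$ has CM by $\OO = \ZZ[\tau]$, then $\sigma_x$ sends $E_\tau$ to $E_{\tau'}$ where the lattice of $E_{\tau'}$ is obtained, up to scaling, from the action of $\A^{-1}$ on $\ZZ\tau + \ZZ$. A direct manipulation using $x^{-1}(\tau,1)^T = g_\tau(x^{-1})(\tau,1)^T$ shows that this new lattice is proportional to $\ZZ\cdot g_\tau(x^{-1})\tau + \ZZ$, whence $j(\tau)^{\sigma_x} = j(g_\tau(x^{-1})\tau) = j^{g_\tau(x^{-1})}(\tau)$.

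For (ii), one uses the second half of the Main Theorem of CM: the action of $\sigma_x$ on $E_\tau[N] \cong N^{-1}\OO/\OO$ is multiplication by $x^{-1}$. Under the identification of $N$-torsion points with column vectors $\tfrac{1}{N}\bigl(\begin{smallmatrix}a\\b\end{smallmatrix}\bigr)$ read off from the basis $(\tau,1)^T$, this is precisely left-multiplication by $g_\tau(x^{-1})$. Transporting this to the level of Fricke functions $f_{v,N}$ recovers the formula, with the $\SL_2(\ZZ/N\ZZ)$-part acting by the matrix action on the torsion label and the $G_N = \{\mathrm{diag}(1,d)\}$-part acting through the cyclotomic character on the Fourier coefficients in $\QQ(\zeta_N)$.

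The main obstacle, and the place where care is needed, is the compatibility between the two separately defined $\GL_2$-actions (the $\SL_2(\widehat{\ZZ})$ action by fractional linear transformation and the $G_N$ action by twisting Fourier coefficients by $\det$) when applied simultaneously to a torsion-valued function $f_{v,N}$. The resolution is classical but nontrivial: one must match the Weil pairing on $E_\tau[N]$ against the cyclotomic action on $\zeta_N$ so that the two actions glue to a single well-defined Galois action on $\F_N$. Once this is in hand, the decomposition $g_\tau(x^{-1}) = u\alpha$ with $u \in \GL_2(\widehat{\ZZ})$ and $\alpha \in \GL_2(\QQ)^+$ produces an unambiguous value $f^{g_\tau(x^{-1})}(\tau)$ independent of the decomposition, and the verification on generators closes the argument.
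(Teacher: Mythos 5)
The paper does not actually prove this theorem: it is quoted as background, with the proof deferred to Stevenhagen and Gee, so there is no internal argument to compare yours against. Your outline is the standard textbook derivation: kill $K^{\times}$ by observing that $g_{\tau}(\alpha)$ is a rational matrix of positive determinant fixing $\tau$, then verify the formula on the generators $j$ and the Fricke functions $f_{v,N}$ of $\F_N$ using the two halves of the Main Theorem of Complex Multiplication (the lattice action for $j$, multiplication by $x^{-1}$ on $N$-torsion for $f_{v,N}$). That is the right skeleton, and you correctly isolate the genuinely delicate point, namely that the $\SL_2(\ZZ/N\ZZ)$-action by fractional linear transformations and the $\left(\begin{smallmatrix}1&0\\0&d\end{smallmatrix}\right)$-action on Fourier coefficients must glue, via the Weil-pairing/cyclotomic compatibility, into a single action of $\GL_2(\ZZ/N\ZZ)/\{\pm 1\}$ for which $f^{u\alpha}$ is independent of the decomposition $g_{\tau}(x^{-1})=u\alpha$.

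Two points need repair. First, the sentence defining $\A:=x\widehat{\OO}_K\cap K$ as ``a well-defined representative for $\sigma_x$'' is wrong as stated: once you have reduced to $x\in\widehat{\OO}_K^{\times}$, that intersection is the unit ideal, whereas $\sigma_x$ is generally nontrivial on ray class fields; the Artin symbol of a unit idele is detected by its congruence class at the ramified places, not by the associated ideal. Your steps (i) and (ii) in fact use the idelic form of the Main Theorem (multiplication by $x^{-1}$ on $K/\aaa$), so the argument survives if you delete that sentence, but as written it conflates the ideal-theoretic and idelic Artin maps. (The reduction to $\widehat{\OO}_K^{\times}$ also uses that $\OO_K$ is a PID, true for $K=\QQ[\sqrt{-3}]$ but worth saying.) Second, the compatibility you defer to ``classical but nontrivial'' is, for a self-contained proof, the entire content of the theorem: one must actually exhibit the isomorphism $\Gal(\F_N/\F_1)\cong\GL_2(\ZZ/N\ZZ)/\{\pm 1\}$ and check independence of the decomposition $g=u\alpha$, e.g.\ via strong approximation for $\SL_2$. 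As a plan your proposal is sound and matches the cited sources; as a proof it stops exactly where the work begins.
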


Note that the elements of $K^{\times}$ have trivial action. This can be easily seen by embedding $K^{\times} \hookrightarrow \GL_2(\QQ)^{+}$ via $k\hookrightarrow g_{\tau}(k)$. Noting that $\tau$ is fixed by the action of the torus $K^{\times}$, we have $f^{g_{\tau}(k^{-1})}(\tau)=f(g_{\tau}(k^{-1})\tau)=f(\tau).$

We can also rewrite the theorem for ideals in $K$. Let $f \in \F_N$ and $\OO=\ZZ[\tau]$ of conductor $M$. Going through the Artin map, we can restate Shimura's reciprocity in this case in the form:
\begin{equation}\label{Shimura_rec}
f(\tau)^{\sigma_{\A}}=f^{g_{\tau}(\A)^{-1}}(\tau),
\end{equation} 
\noindent where $\A$ is an ideal prime to $MN$, $\sigma_{\A}$ is the Galois action corresponding to the ideal $\A$ through the Artin map, and $g_{\tau}(\A):=g_{\tau}((\alpha)_{p|\Nm(\A)}).$ Note that $g_{\tau}(\A)$ is unique up to multiplication by roots of unity in $K$. However, these have trivial action on $f$ at the unramified places $p|\Nm(\A)$.

\subsection{Galois conjugates of $f(\omega)$}

We denote $\ds f(z)=\frac{\Theta_K(Dz)}{\Theta_K(z)}$. We are interested in finding the Galois conjugates of $f(\omega)$. First we show that $f(z)$ is a modular function:


\begin{Lem}\label{modular_fourier} The function f(z) is a modular function of level $3D$ with integer Fourier coefficients at the cusp $\infty$.

\end{Lem}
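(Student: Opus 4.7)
The plan is to verify the two assertions separately: invariance of $f$ under $\Gamma_0(3D)$ (with the required meromorphicity) and integrality of its Fourier expansion at $\infty$.

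For the modular-function property, the key input is that $\Theta_K$ is a weight-one modular form for $\Gamma_0(3)$ with Nebentypus $\varepsilon=\left(\frac{\cdot}{3}\right)$, so that for every $\gamma_0=\left(\begin{smallmatrix}a&b\\c&d\end{smallmatrix}\right)\in\Gamma_0(3)$ one has $\Theta_K(\gamma_0 z)=\varepsilon(d)(cz+d)\Theta_K(z)$. I would take an arbitrary $\gamma=\left(\begin{smallmatrix}a&b\\3Dc&d\end{smallmatrix}\right)\in\Gamma_0(3D)$ and conjugate by the scaling matrix $\left(\begin{smallmatrix}D&0\\0&1\end{smallmatrix}\right)$: writing $w=Dz$, a direct computation gives
\[
D\,\gamma z \;=\; D\cdot\frac{az+b}{3Dcz+d} \;=\; \frac{aw+Db}{3cw+d} \;=\; \gamma'\cdot w,
\]
where $\gamma'=\left(\begin{smallmatrix}a&Db\\3c&d\end{smallmatrix}\right)\in\Gamma_0(3)$ (note $\det\gamma'=ad-3Dbc=\det\gamma=1$). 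Applying the transformation law of $\Theta_K$ to both numerator and denominator of $f(\gamma z)$ produces the same automorphy factor $\varepsilon(d)(3Dcz+d)$, which cancels in the ratio, giving $f(\gamma z)=f(z)$. Meromorphicity on $\HH$ and at the cusps is automatic since $f$ is the ratio of two everywhere-holomorphic modular forms of the same weight and character.

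For integrality of the Fourier expansion at $\infty$, I would start from the expansion $\Theta_K(z)=1+6\sum_{n\ge 1} r_K(n)\,q^n$ with $q=e^{2\pi i z}$, where $r_K(n)=\sum_{d\mid n}\varepsilon(d)$ is the number of integral ideals of $\OO_K$ of norm $n$; this was recorded implicitly in the proof of Theorem \ref{siegel_weil}. In particular $\Theta_K(z)\in\ZZ[[q]]$ with constant term $1$, and therefore its formal inverse $\Theta_K(z)^{-1}$ lies in $\ZZ[[q]]$ as well. Since $\Theta_K(Dz)=1+6\sum_{n\ge 1}r_K(n)q^{Dn}\in\ZZ[[q]]$ too, the product $f(z)=\Theta_K(Dz)\cdot\Theta_K(z)^{-1}$ lies in $\ZZ[[q]]$, which is exactly the claim.

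No step here is a serious obstacle; the only piece requiring a little care is the conjugation identity $D\gamma z=\gamma'(Dz)$ that translates $\Gamma_0(3D)$-invariance of $f(z)$ into the known $\Gamma_0(3)$-transformation law of $\Theta_K$ evaluated at $Dz$, and this is a one-line matrix calculation.
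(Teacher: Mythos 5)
Your proof is correct and follows essentially the same route as the paper: conjugating $\left(\begin{smallmatrix}D&0\\0&1\end{smallmatrix}\right)$ past the group element so that numerator and denominator acquire the same automorphy factor, and then inverting $\Theta_K\in\ZZ[[q]]$ (constant term $1$) to get integrality of the coefficients, which is exactly the recursion the paper writes out. The only (favorable) difference is that you verify invariance under all of $\Gamma_0(3D)$ via the nebentypus $\varepsilon$, whereas the paper only displays the computation for $\Gamma(3D)$.
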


\begin{proof} Since $\Theta_K(z)$ is a modular form of weight $1$ for $\Gamma_1(3)$, it can be easily seen that $\Theta(Dz)$ is a modular form of weight $1$ for $\Gamma(3D)$. Furthermore, their ratio is modular function for $\Gamma_0(3D)$. To check this let $g=\left(\begin{smallmatrix} a& b \\ c & d \end{smallmatrix} \right) \in \Gamma(3D)$, and then it is easy to see that $f(gz)=\frac{\Theta\left(\left(\begin{smallmatrix} D& 0 \\ 0 & 1 \end{smallmatrix} \right) gz\right)}{\Theta\left(gz\right)}
=
\frac{\Theta\left(\left(\begin{smallmatrix} a& bD \\ c/D & d \end{smallmatrix} \right)(Dz)\right)}{\Theta\left(\left(\begin{smallmatrix} a& b \\ c & d \end{smallmatrix} \right)z\right)}
=
f(z).$

To find the Fourier expansion of $f(z)$ at $\infty$, it is enough to write the Fourier expansions of $\Theta(Dz)$ and $\Theta(z)$: $\frac{\Theta(Dz)}{\Theta(z)}
=
\frac{1+\sum\limits_{N\geq 1}c_{N}q^{ND}}{1+\sum\limits_{N\geq 1}{c_{N}}q^N}=\sum\limits_{M\geq 0}a_M q^M$. We can compute the Fourier coefficients explicitly from the equality: $a_0=1$ and $a_M=-a_{M-1}c_{1}-a_{M-2}c_{2}-\dots-a_1c_{M-1}-a_0c_{M}$ if $D\nmid M$ and $a_M=c_{M/D}-a_{M-1}c_{1}-a_{M-2}c_{2}-\dots-a_1c_{M-1}-a_0c_{M}$ if $D|M$. By induction, since $c_N\in \ZZ$, we get all the coefficients $a_M \in \ZZ$.\end{proof}

 From CM-theory, if $f\in \F_{3D}$ and $\tau$ a generator of $\OO_K$, we have $f(\tau)\in H_{\OO}$ the ray class field of modulus $3D$. Recall $H_{3D}$ is the ring class field for the the order $\OO_{3D}=\ZZ+3D\OO_K$, and we actually have: 

\begin{Lem}\label{lemma_H_3D} $f(\omega)\in H_{3D}$.

\end{Lem}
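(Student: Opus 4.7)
The plan is to apply Shimura's reciprocity law to show that $f(\omega)$ is fixed by the Galois action of every idele in $U(3D)$. By Lemma \ref{modular_fourier} we know $f \in \F_{3D}$, so CM theory already gives $f(\omega) \in H_{\OO}$, the ray class field of modulus $3D$. The ring class field $H_{3D}$ is the subfield of $H_{\OO}$ cut out (via the Artin map) by the subgroup $U(3D)$, so it suffices to check that for every $u \in U(3D)$ one has $f(\omega)^{\sigma_u} = f(\omega)$, which by Shimura reciprocity means $f^{g_\omega(u^{-1})}(\omega) = f(\omega)$.

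Next I would compute the image of $g_\omega(u^{-1})$ in $\GL_2(\ZZ/3D\ZZ)$. Since the minimal polynomial of $\omega$ is $X^2+X+1$, the local formula for the idelic embedding gives
\[
g_\omega(s_p\omega+t_p) = \begin{pmatrix} t_p - s_p & -s_p \\ s_p & t_p \end{pmatrix}.
\]
For $u \in U(3D)$: at the place $v \mid 3$, $u_v \in 1 + 3\OO_{K_v}$ forces $s_v, t_v-1 \in 3\ZZ_3$, hence $g_\omega(u_v^{-1}) \equiv I \pmod 3$; at a prime $p \mid D$, $u_p \in (\ZZ_p + D\ZZ_p[\omega])^\times$ writes $u_p = a_p + D(s\omega + t)$ with $a_p \in \ZZ_p^\times$, so $u_p^{-1} \equiv a_p^{-1} \pmod{D\OO_{K_p}}$ and $g_\omega(u_p^{-1}) \equiv a_p^{-1} I \pmod{p^{e_p}}$; and at $p \nmid 3D$ the local factor is trivially in $\GL_2(\ZZ_p)$ and maps to the identity in $\GL_2(\ZZ_p/3D\ZZ_p)=0$. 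Combining by the Chinese Remainder Theorem, there is a unique $d \in (\ZZ/3D\ZZ)^\times$ (with $d \equiv 1 \pmod 3$ and $d \equiv a_p^{-1} \pmod{p^{e_p}}$ for each $p \mid D$) such that $g_\omega(u^{-1}) \equiv dI \pmod{3D}$.

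Finally I would show the scalar matrix $dI$ acts trivially on $f$. Using the decomposition $dI = \bigl(\begin{smallmatrix} d & 0 \\ 0 & d^{-1}\end{smallmatrix}\bigr) \cdot \bigl(\begin{smallmatrix} 1 & 0 \\ 0 & d^2\end{smallmatrix}\bigr)$ in $\GL_2(\ZZ/3D\ZZ)$, the second factor $u_{d^2}$ acts on $f$ by the Galois twist $\zeta_{3D} \mapsto \zeta_{3D}^{d^2}$ of its Fourier coefficients at $\infty$, which is trivial because those coefficients are integers by Lemma \ref{modular_fourier}. The first factor lies in $\SL_2(\ZZ/3D\ZZ)$ and admits a lift $\tilde\gamma \in \SL_2(\ZZ)$ with lower-left entry a multiple of $3D$, i.e.\ $\tilde\gamma \in \Gamma_0(3D)$; the $\Gamma_0(3D)$-invariance of $f$ (Lemma \ref{modular_fourier}) then yields $f\circ \tilde\gamma = f$. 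Putting the two actions together gives $f^{g_\omega(u^{-1})} = f$, hence $f(\omega)^{\sigma_u} = f(\omega)$, completing the proof.

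The main obstacle is the bookkeeping in the second paragraph: one must carefully verify that at each prime $p \mid 3D$ the matrix $g_\omega(u_p^{-1})$ reduces to a scalar modulo the appropriate local power, so that the CRT assembly really produces a global scalar in $\GL_2(\ZZ/3D\ZZ)$. The rest of the argument is a standard Shimura-reciprocity reduction, and the fact that $f$ has rational Fourier coefficients together with its $\Gamma_0(3D)$-modularity is exactly what makes scalars act trivially.
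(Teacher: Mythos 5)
Your proposal is correct and follows essentially the same route as the paper: both apply Shimura reciprocity, reduce to checking invariance under $U(3D)$ (the $K^{\times}$-action being trivial), and split the resulting matrix into a $\Gamma_0(3D)$ factor plus a diagonal factor whose Galois action on Fourier coefficients is killed by rationality. The only difference is bookkeeping: the paper replaces $\omega$ by $\tau=\frac{-b+\sqrt{-3}}{2}$ and works with a global generator $ta+s\tau$ of a primitive ideal, whereas you stay at $\omega$ and observe directly that $g_{\omega}(u^{-1})$ reduces to a scalar modulo $3D$ --- a slightly cleaner packaging of the same computation.
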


\begin{proof} We need to show that $f(\omega)$ is invariant under $\Gal(K^{ab}/H_{3D})$. Recall that we have $\Gal(H_{3D}/K)\cong U(3D)\setminus \AAA_{K, f}^{\times}/K^{\times}$ for $U(3D)=(1+3\ZZ_3[\omega])\prod\limits_{v|D}{(\ZZ+D\ZZ_p[\omega])^{\times}}\prod\limits_{v\nmid 3D}{\OO_{K_v}^{\times}}$. We check that $f(\omega)$ is invariant under the action of $K^{\times}U(3D)$. Using Shimura's reciprocity law, we want to show:
\[
f(\omega)=f^{g_{\omega}(s)}(\omega), 
\]
\noindent for all $s\in K^{\times}U(3D)$. We noted before that the action of $K^{\times}$ is trivial. Thus it is enough to show the result for all elements $l=(A_p+B_p\omega)_{p}\in U(3D)$. By the definition of $U(3D)$, this implies that $A_p+B_p\omega \in (\ZZ_p[\omega])^{\times}$ for all $p$  and $A_3 \equiv 1\mod 3$, $B_3 \equiv 1\mod 3$, $B_p \equiv 0 \mod D$ for all $p|D$. Since the action for $p\nmid 3D$ is trivial, $l$ has the same Galois action as $l_D=(A_p+B_p\omega)_{p|3D}\in U(3D)$. Moreover, this has the same action as $l_0=(A+B\omega)_{p|3D}$, where $A+B\omega \in \OO_K$ and $A\equiv A_p \mod 3D\ZZ_p$ and $B\equiv B_p\mod 3D\ZZ_p$ for all $p|3D$. 

Note further that we can pick $A, B$ such that $(A+B\omega)$ generates a primitive ideal $\A$  in $\OO_K$. Moreover, from above we have $3D|B$ and $A\equiv 1\mod 3$. Recall that we can rewrite any primitive ideal in the form $\A=[a, \frac{-b+\sqrt{-3}}{2}]_{\ZZ}$, where $a=\Nm\A$ and $b^2\equiv -3 \mod 4a$. Then the generator of the ideal $\A$ is $A+B\omega=ta+s\frac{-b+\sqrt{-3}}{2}$ for $t, s\in \ZZ$ and $3D|s$.

Now note that $f(\omega)=f(\tau)$, where $\tau=\frac{-b+\sqrt{-3}}{2}$. Thus from Shimura's reciprocity law, we have:
\[
(f(\tau))^{\sigma_{l^{-1}}}=f^{g_{\tau}(l_{p|3D})}(\tau)=f^{g_{\tau}(l_{0})}(\tau).
\]
Here $g_{\tau}(l_0)=\left(\begin{smallmatrix} ta-sb & -sca \\ s & ta \end{smallmatrix} \right)_{p|3D}$, where $ca=\frac{b^2+3}{4}$. Then we can rewrite the action of $g_{\tau}(l_0)$ explicitly as:
\[
f^{g_{\tau}(l_0)}(\tau)
=
f^{\left(\begin{smallmatrix} ta-sb & -sc \\ s & t \end{smallmatrix} \right)_{p|3D}\left(\begin{smallmatrix} 1 & 0 \\ 0 & a \end{smallmatrix} \right)_{p|3D}}(\tau)
=
f^{\left(\begin{smallmatrix} 1 & 0 \\ 0 & a \end{smallmatrix} \right)_{p|3D}}(\left(\begin{smallmatrix} ta-sb & -sc \\ s & t \end{smallmatrix} \right)\tau).
\]
Since $3D|s$, the matrix $\left(\begin{smallmatrix} ta-sb & -sc \\ s & t \end{smallmatrix} \right)\in \Gamma_0(3D)$ and $f(z)$ is invariant under its action. Finally, since $(a, 3D)=1$ and $f$ has rational Fourier coefficients at $\infty$, the action of $\left(\begin{smallmatrix} 1 & 0 \\ 0 & a \end{smallmatrix} \right)_{p|3D}$ is trivial. Thus $f(\omega)$ is invariant under the Galois action coming from $U(3D)$ and this finishes the proof. \end{proof}

Now we would like to compute the Galois conjugates of $f(\omega)$ under the action of $\Gal(H_{3D}/K)$. We will first show the following general result:

\begin{Lem}\label{galois_conj} Let $F\in \F_N$ be a modular function of level $N$ with rational Fourier coefficients in its Fourier expansion at $\infty$. Let $\tau=\frac{-b+\sqrt{-3}}{2}$ be a CM point and let $\A=\left[a, \frac{-b+\sqrt{-3}}{2}\right]_{\ZZ}$ be a primitive ideal prime to $N$. Then we have the Galois action:
\[
F(\tau)^{\sigma_{\A}^{-1}}=F(\tau/a).
\]
\end{Lem}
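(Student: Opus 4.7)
The plan is to apply Shimura's reciprocity law in the form $F(\tau)^{\sigma_\A^{-1}} = F^{g_\tau(x)}(\tau)$, where $x \in \AAA_{K,f}^\times$ is any idelic representative of the ideal $\A$. The goal is to factor $g_\tau(x) = u \cdot \alpha$ inside $\GL_2(\AAA_{\QQ,f})$ with $u \in \prod_p \GL_2(\ZZ_p)$ and $\alpha = \left(\begin{smallmatrix} 1 & 0 \\ 0 & a \end{smallmatrix}\right) \in \GL_2(\QQ)^+$; because $\alpha$ acts on $\HH$ by the fractional linear transformation $\tau \mapsto \tau/a$, this factorization will give $F^{u\alpha}(\tau) = (F^u)(\tau/a)$, and it will remain only to show that $F^u = F$.

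First I would construct $x$ locally. Since $\A$ is prime to $N$, I can take $x_p = 1$ at every $p \nmid a$; in particular $x_p = 1$ at all $p \mid N$. At primes $p \mid a$, I would pick $x_p$ to be a local generator of $\A \OO_{K_p}$. Writing $x_p = s_p \tau + t_p$ with $s_p, t_p \in \QQ_p$ and using the minimal polynomial $\tau^2 + b\tau + ac = 0$ (where $4ac = b^2+3$), the formula $g_\tau(x_p) = \left(\begin{smallmatrix} t_p - s_p b & -s_p a c \\ s_p & t_p \end{smallmatrix}\right)$ makes the local matrices explicit. At primes $p \nmid a$ we have $g_\tau(x_p) = I$, so $u_p = \alpha^{-1} = \left(\begin{smallmatrix} 1 & 0 \\ 0 & a^{-1} \end{smallmatrix}\right)$, which lies in $\GL_2(\ZZ_p)$ because $a$ is a unit in $\ZZ_p$ (recall $\gcd(a,N)=1$ at the ramified places). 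At primes $p \mid a$, I would verify that $g_\tau(x_p) \alpha^{-1}$ is integral: its determinant equals $\Nm(x_p)/a$, which is a $p$-adic unit by the norm formula for $\A$, and integrality of the entries follows from a natural choice of local generator (for example $x_p = \tau$ when $p \nmid c$, with a small modification otherwise, organised by a case split on whether $p$ is split or ramified in $K/\QQ$).

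To conclude I would observe that the action of $u \in \prod_p \GL_2(\ZZ_p)$ on $F \in \F_N$ factors through $\GL_2(\ZZ/N\ZZ)$ and hence only depends on the components $u_p$ for $p \mid N$. As computed above, these components reduce mod $N$ to the diagonal matrix $\left(\begin{smallmatrix} 1 & 0 \\ 0 & a^{-1} \end{smallmatrix}\right) \in G_N$, whose action on $\F_N$ is to apply $\sigma_{a^{-1}} : \zeta_N \mapsto \zeta_N^{a^{-1}}$ to the Fourier coefficients of $F$ at $\infty$. Since by hypothesis $F$ has rational Fourier coefficients, this Galois twist is the identity, so $F^u = F$ and Shimura's reciprocity yields
\[
F(\tau)^{\sigma_\A^{-1}} = F^{g_\tau(x)}(\tau) = (F^u)^{\alpha}(\tau) = F(\tau/a).
\]

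The main obstacle is verifying the integrality of $u_p$ at primes $p \mid a$: this requires a judicious choice of local generator $x_p$ of $\A \OO_{K_p}$ and a short case analysis based on the splitting behaviour of $p$ in $K$. Once this local calculation is in hand, everything else is a formal consequence of Shimura's reciprocity together with the rationality of the Fourier expansion at $\infty$.
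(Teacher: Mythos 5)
Your argument is correct and follows essentially the same route as the paper: Shimura reciprocity, the factorization $g_\tau=u\cdot\left(\begin{smallmatrix}1&0\\0&a\end{smallmatrix}\right)_{\QQ}$ with $u$ integral, triviality of the diagonal component's action via the rationality of the Fourier coefficients at $\infty$, and the fractional linear action $\tau\mapsto\tau/a$. The paper simply uses a single global generator $k_{\A}=ta+s\tau$ of $\A$ (available since $\OO_K$ is a PID), so integrality at $p\mid a$ is immediate from $4a\mid b^2+3$ and the determinant is exactly $\Nm(k_{\A})/a=1$; in your local setup the case split you worry about is in fact unnecessary, since any local generator of $\A\OO_{K_p}$ lies in $a\ZZ_p+\tau\ZZ_p$, which already forces $g_\tau(x_p)\left(\begin{smallmatrix}1&0\\0&a^{-1}\end{smallmatrix}\right)$ to have entries in $\ZZ_p$.
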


\begin{proof} From Shimura's reciprocity law, we have $F(\tau)^{\sigma_{\A}^{-1}}=F^{g_{\tau}(\A)} (\tau).$ Note that the minimal polynomial of $\tau$ is $p_{\tau}(X)=X^2+bX+\frac{b^2+3}{4}$. Let $\alpha$ be a generator of $\A$. Then we can write $\alpha$ in the form $\alpha=ta+s\tau$ and we have $g_{\tau}(\A)=\left(\begin{smallmatrix} ta-sb & -s\frac{b^2+3}{4} \\ -s & ta\end{smallmatrix}\right)_{p|a}$. We can rewrite the matrix in the form $g_{\tau}(\A)
=
\left(\begin{smallmatrix} ta-sb & \frac{b^2+3}{4a} \\ -s & t\end{smallmatrix}\right)_{p|a}
\left(\begin{smallmatrix} 1 & 0\\ 0 & a \end{smallmatrix}\right)_{p|a}.$ As $\left(\begin{smallmatrix} ta-sb & -\frac{b^2+3}{4a} \\ -s & t\end{smallmatrix}\right)_{p|a}$ is an element of $\SL_2(\ZZ_p)$ for $p\nmid N$, it has trivial action. Then we have $F^{g_{\tau}(\A)}(\tau)
=
F^{\left(\begin{smallmatrix} 1 & 0\\ 0 & a \end{smallmatrix}\right)_{p|a}}(\tau).$ 

We rewrite the matrix $\left(\begin{smallmatrix} 1 & 0\\ 0 & a \end{smallmatrix}\right)_{p|a}=\left(\begin{smallmatrix} 1 & 0\\ 0 & 1/a \end{smallmatrix}\right)_{p\nmid a}\left(\begin{smallmatrix} 1 & 0\\ 0 & a \end{smallmatrix}\right)_{\QQ}$, where $\left(\begin{smallmatrix} 1 & 0\\ 0 & 1/a \end{smallmatrix}\right)_{p\nmid a}\in \GL_2(\widehat{\ZZ})$ and $\left(\begin{smallmatrix} 1 & 0\\ 0 & a \end{smallmatrix}\right)_{\QQ}\in \GL_2(\QQ)^{+}$. Note that the action of $\left(\begin{smallmatrix} 1 & 0\\ 0 & 1/a \end{smallmatrix}\right)_{p\nmid a}$ is the same as the action of $\left(\begin{smallmatrix} 1 & 0\\ 0 & 1/a \end{smallmatrix}\right)_{p|N}$. However, since $F$ has rational Fourier coefficients in its Fourier expansion, this action is trivial. Thus we are left with $F^{g_{\tau}(\A)}(\tau)
=
F^{\left(\begin{smallmatrix} 1 & 0\\ 0 & a \end{smallmatrix}\right)_{\QQ}}(\tau)
=
F(\tau/a)$, which finishes the proof.

\end{proof}


We apply the lemma above to our case:

\begin{Prop}\label{Galois_conj_theta} Take the primitive ideals $\A=\left[a, \frac{-b+\sqrt{-3}}{2}\right]_{\ZZ}$ to be the representatives of the classes of the ring class group $\Cl(\OO_{3D})$ such that all norms $a=\Nm\A$ are relatively prime to each other and $b^2\equiv -3 \mod 4a$ for all the norms $a$.

Then the Galois conjugates of $f(\omega)=\frac{\Theta_K(D\omega)}{\Theta_K(\omega)}$ are the terms $\left(\frac{\Theta_K(D\omega)}{\Theta_K(\omega)}\right)^{\sigma_{\A}^{-1}}
=
\frac{\Theta_K\left(D\tau_{\A}\right)}{\Theta_K\left(\tau_{\A}\right)}.$

\end{Prop}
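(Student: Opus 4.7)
The plan is to reduce the statement directly to Lemma \ref{galois_conj}. By Lemma \ref{modular_fourier}, the function $f(z) = \Theta_K(Dz)/\Theta_K(z)$ is a modular function of level $3D$ whose Fourier expansion at $\infty$ has integer (hence rational) coefficients, so the hypotheses needed to invoke Lemma \ref{galois_conj} are in place.

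First I would arrange a uniform choice of $b$ for all representatives. Since each class in $\Cl(\OO_{3D})$ contains primitive ideals of prime norm prime to $3D$, one can choose representatives $\A = [a, \frac{-b+\sqrt{-3}}{2}]_{\ZZ}$ whose norms $a = \Nm \A$ are pairwise coprime and each prime to $3D$. The Chinese Remainder Theorem then produces a single integer $b$ simultaneously satisfying $b^2 \equiv -3 \pmod{4a}$ for every such $a$, matching the hypothesis of the proposition. Setting $\tau = \frac{-b+\sqrt{-3}}{2}$, one has $\tau_{\A} = \tau/a$ for every representative.

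Next I would identify $f(\omega)$ with $f(\tau)$. Since $b^2 \equiv -3 \equiv 1 \pmod 4$, the integer $b$ is odd, so $\tau - \omega = (1-b)/2 \in \ZZ$ and likewise $D(\tau - \omega) \in \ZZ$. Both $\Theta_K(z)$ and $\Theta_K(Dz)$ are invariant under $z \mapsto z+1$ (their Fourier expansions are in $q = e^{2\pi i z}$), which yields $f(\tau) = f(\omega)$.

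Finally I would apply Lemma \ref{galois_conj} with $F = f$, $N = 3D$, and the chosen $\tau$ and $\A$, obtaining
\[
\left(\frac{\Theta_K(D\omega)}{\Theta_K(\omega)}\right)^{\sigma_{\A}^{-1}} = f(\omega)^{\sigma_{\A}^{-1}} = f(\tau)^{\sigma_{\A}^{-1}} = f(\tau/a) = f(\tau_{\A}) = \frac{\Theta_K(D\tau_{\A})}{\Theta_K(\tau_{\A})}.
\]
Beyond assembling these ingredients, there is no real obstacle: the only mildly subtle point is the simultaneous choice of $b$, which CRT handles once the representative norms are taken pairwise coprime. Everything else is immediate from Lemma \ref{galois_conj} and the translation invariance of $\Theta_K$.
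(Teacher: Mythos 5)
Your proposal is correct and takes essentially the same route as the paper: identify $f(\omega)=f(\tau)$ for $\tau=\frac{-b+\sqrt{-3}}{2}$ via the translation invariance of $\Theta_K$, then apply Lemma \ref{galois_conj} to the level-$3D$ modular function $f$ with rational Fourier coefficients (your CRT arrangement of a uniform $b$ is a reasonable way to make the representatives explicit). The one point you leave implicit is that these terms are \emph{all} the Galois conjugates, which requires $f(\omega)\in H_{3D}$ (Lemma \ref{lemma_H_3D}) so that the classes of $\Cl(\OO_{3D})$ exhaust $\Gal(H_{3D}/K)$; the paper closes its proof by citing exactly this.
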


\begin{proof} We note that $\frac{\Theta_K(D\omega)}{\Theta_K(\omega)}=\frac{\Theta_K\left(D\tau\right)}{\Theta_K\left(\tau\right)}$ and apply Lemma \ref{galois_conj} to $\tau=\frac{-b+\sqrt{-3}}{2}$ and $f(z)=\frac{\Theta_K(Dz)}{\Theta_K(z)}$. These are the only Galois conjugates, as we showed that $f(\tau)\in H_{3D}$. \end{proof}

\subsection{Galois conjugates of $f_{r, \mu}(\tau)$}\label{shimura_r}

Recall that we defined $\ds f_{r, \mu}(z)=\frac{\theta_{r, \mu}(z)}{\theta_{0}(z)}$. This is a modular function for $\Gamma(18D^2)$ when $\mu=1/2$ and for $\Gamma(9D^2)$ when $\mu=1/6$. From CM-theory then $f_{r, \mu}(\tau)\in H_{\OO'}$, where $H_{\OO'}$ is the ray class field of modulus $18D^2$ for $\mu=1/2$ and of modulus $9D^2$ for $\mu=1/6$. In order to compute the Galois conjugates of $f_{r, \mu}(\tau)$ over $K$ we can use Shimura's reciprocity law. 


In our case we want to compute the Galois conjugates of $f_{r, \mu}(\tau)$ for $\tau=\frac{-b+\sqrt{-3}}{2}$, with $b^2\equiv -3\mod 12D^2$. Note that $\tau$ has the minimum polynomial $X^2+bX+\frac{b^2+3}{4}$. Thus we have to compute the action of all $\ds g_{\tau}((x_p)_p)=\prod\limits_p\left( \begin{smallmatrix} t_p-s_pb & -s_p\frac{b^2+3}{4} \\ s_p & t_p \end{smallmatrix} \right)_p$ on $f_{r, \mu}(\tau)$.  We will prove that the Galois action from Shimura's reciprocity law is given by the following:

\begin{Prop}\label{Galois_theta_r} 
 For $\A=[a, \frac{-b+\sqrt{-3}}{2}]_{\ZZ}$ an ideal prime to $6D$ such that $b^2\equiv -3 \mod 12Da^2$. Let $k_{\A}=n_aa+m_a\frac{-b+\sqrt{-3}}{2}$ be the generator of $\A$ such that $3|m_a$ and $n_a\equiv 1 \mod 3$, then for $\tau=\frac{-b+\sqrt{-3}}{2}$ we have:
\[
f_{r, \mu}(\tau)^{\sigma_{\A}^{-1}}=f_{n'_{a}r, \mu}(\tau),
\]
where $n_a\equiv n'_a \mod 3D$ and $n_{a'}$ odd.
Moreover, these are all the Galois conjugates of $f_{r, \mu}(\tau)$ and $f_{r, \mu}(\tau)$ is in $H_{\OO}$ the ray class field of modulus $3D$.
\end{Prop}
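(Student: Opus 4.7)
The plan is to combine Lemma~\ref{galois_conj} with an explicit theta transformation computation. First, since $f_{r,\mu}\in\F_N$ has a Fourier expansion at $\infty$ in $q_N$ with integer coefficients (as can be checked by expanding both $\theta_{r,\mu}$ and $\theta_0$ in $q_{72D^2}$, where both have coefficients $\pm 1$, and noting that $\theta_0$ has leading coefficient $1$), Lemma~\ref{galois_conj} applies and yields
\[
f_{r,\mu}(\tau)^{\sigma_{\A}^{-1}}=f_{r,\mu}(\tau/a)=f_{r,\mu}(\tau_{\A}),
\]
so it remains to prove the identity $f_{r,\mu}(\tau_\A)=f_{n'_a r,\mu}(\tau)$.

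For this, I would exhibit an explicit matrix $\sigma\in\SL_2(\ZZ)$ carrying $\tau_\A$ to $\tau$. The global rational matrix $g_\tau(k_\A)\in\GL_2(\QQ)^{+}$ fixes $\tau$ under fractional linear transformations, and a short calculation gives the factorization $g_\tau(k_\A)=\sigma\cdot\bigl(\begin{smallmatrix}1 & 0 \\ 0 & a\end{smallmatrix}\bigr)$ with
\[
\sigma=\begin{pmatrix} n_a a-m_a b & -m_a(b^2+3)/(4a) \\ m_a & n_a\end{pmatrix}\in\SL_2(\ZZ);
\]
the $(1,2)$-entry is integral because $a^2\mid b^2+3$ (from $12Da^2\mid b^2+3$), and $\det\sigma=1$ follows from $\Nm k_\A=a$. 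Since $\bigl(\begin{smallmatrix}1 & 0 \\ 0 & a\end{smallmatrix}\bigr)\tau=\tau/a=\tau_\A$ and $g_\tau(k_\A)\tau=\tau$, we get $\sigma\tau_\A=\tau$, hence $\tau_\A=\sigma^{-1}\tau$ and $f_{r,\mu}(\tau_\A)=f_{r,\mu}(\sigma^{-1}\tau)$.

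The main step is to evaluate $\theta_{r,\mu}(\sigma^{-1}\tau)/\theta_0(\sigma^{-1}\tau)$ via the classical transformation law for theta functions with characteristic. Writing $\theta_{r,1/6}$, up to an explicit exponential prefactor, as $\theta\bigl[\begin{smallmatrix}r/D-1/6 \\ 1/2\end{smallmatrix}\bigr]$ (and $\theta_{r,1/2}$ as $\theta\bigl[\begin{smallmatrix}r/D-1/2 \\ 1/2\end{smallmatrix}\bigr]$, $\theta_0$ as $\theta\bigl[\begin{smallmatrix}-1/6 \\ 1/2\end{smallmatrix}\bigr]$), the formula
\[
\theta\bigl[\begin{smallmatrix}\mu' \\ \nu'\end{smallmatrix}\bigr](\gamma z)=\epsilon(\gamma,\mu',\nu')\,(Cz+D)^{1/2}\,\theta\bigl[\begin{smallmatrix}\mu'' \\ \nu''\end{smallmatrix}\bigr](z),\qquad \gamma=\bigl(\begin{smallmatrix}A & B \\ C & D\end{smallmatrix}\bigr)\in\SL_2(\ZZ),
\]
with $\bigl[\begin{smallmatrix}\mu''\\ \nu''\end{smallmatrix}\bigr]=\sigma\bigl[\begin{smallmatrix}\mu' \\ \nu'\end{smallmatrix}\bigr]+\tfrac{1}{2}\bigl[\begin{smallmatrix}CD \\ AB\end{smallmatrix}\bigr]$ and $\gamma=\sigma^{-1}$, together with the hypotheses $12Da^2\mid b^2+3$, $3\mid m_a$, and $n_a\equiv 1\pmod 3$, forces the new characteristic of the numerator to reduce to $\bigl[\begin{smallmatrix}n_a r/D-1/6 \\ 1/2\end{smallmatrix}\bigr]\bmod\ZZ^2$ while the characteristic of $\theta_0$ is preserved; the automorphy factor $(Cz+D)^{1/2}$ and the eighth root of unity $\epsilon$ then cancel in the ratio. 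The identity $\theta_{r+D,\mu}=-\theta_{r,\mu}$ forces the choice of the representative $n'_a\equiv n_a\pmod{3D}$ with $n'_a$ odd to match the overall sign.

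Finally, since the action depends only on $n_a\bmod 3D$, the value $f_{r,\mu}(\tau)$ is fixed by the kernel of $\Gal(K^{ab}/K)\twoheadrightarrow\Gal(H_\OO/K)$ and hence lies in the ray class field $H_\OO$ of modulus $3D$, and the enumeration over all $\A$ exhausts the Galois conjugates. The principal obstacle is the third step: verifying that the quadratic correction $\tfrac{1}{2}\bigl[\begin{smallmatrix}CD \\ AB\end{smallmatrix}\bigr]$ and the eighth root of unity $\epsilon$ cancel between numerator and denominator in the ratio $f_{r,\mu}$; the divisibility hypotheses $12Da^2\mid b^2+3$, $3\mid m_a$, $n_a\equiv 1\pmod 3$ are precisely calibrated to make these cancellations work.
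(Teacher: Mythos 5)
Your strategy is essentially the paper's argument run in the opposite order. The paper proves the proposition directly from Shimura reciprocity: it replaces the idele by an element $k_0\in\OO_K$, factors $g_{\tau'}(k_0)$ as an $\SL_2(\ZZ)$ matrix times $\left(\begin{smallmatrix}1&0\\0&a\end{smallmatrix}\right)$, kills the diagonal factor using rationality of the Fourier coefficients, and evaluates the $\SL_2(\ZZ)$ factor by the automorphic transformation law of Lemma \ref{transformation_r} (proved via the Weil representation); the identity $f_{r,\mu}(\tau)^{\sigma_{\A}^{-1}}=f_{r,\mu}(\tau_{\A})$ is only deduced afterwards, via Lemma \ref{galois_conj}, in Lemma \ref{theta_galois}. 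You invoke Lemma \ref{galois_conj} first (legitimate: $f_{r,\mu}$ has rational coefficients at $\infty$ and $\A$ is prime to the level), reducing everything to the analytic identity $f_{r,\mu}(\tau_{\A})=f_{n_a'r,\mu}(\tau)$, and your matrix $\sigma$ with $g_\tau(k_{\A})=\sigma\cdot\left(\begin{smallmatrix}1&0\\0&a\end{smallmatrix}\right)$ is exactly the $\SL_2(\ZZ)$ factor the paper extracts. The classical characteristic transformation law and Lemma \ref{transformation_r} are interchangeable here, so the two routes buy the same thing; yours makes the geometric content ($\sigma\tau_{\A}=\tau$) more visible.

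The gap is that the one step carrying all the content --- that the characteristic of the numerator becomes $n_ar/D-1/6 \bmod \ZZ$ while the prefactors cancel against those of $\theta_0$ --- is asserted, not verified, and you flag it yourself as the principal obstacle; this is precisely what the paper's page-long Lemma \ref{transformation_r} supplies. Three points need attention if you carry it out. First, the multiplier $n_a$ does not come only from the congruences you list: it comes from $n_a(n_aa-m_ab)\equiv 1\pmod D$, which uses the norm equation $\Nm k_{\A}=a$ together with $D\mid\tfrac{b^2+3}{4}$ (the top-left entry of $\sigma$ is $\equiv n_a^{-1}\pmod D$, and it is the top-left entry of $\sigma^{-1}$, namely $n_a$, that multiplies $r$). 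Second, $12Da^2\mid b^2+3$ and $3\mid m_a$ only give $9Da$ dividing the off-diagonal entry $m_a\tfrac{b^2+3}{4a}$, which is weaker than the divisibility by $3D^2$ under which the roots of unity become independent of $r$; the paper gets around this by first replacing $b$ with $b'\equiv b\bmod 18D^2\Nm k_0$, and you will need the same adjustment. Third, the hypothesis only gives $n_a\equiv 1\pmod 3$, not $n_a\equiv 1\pmod 6$, so the passage to the odd representative $n_a'$ via $\theta_{r+D,\mu}=-\theta_{r,\mu}$ requires tracking the $2$-adic part of the eighth root of unity. None of this is a wrong turn, but the proof of the proposition \emph{is} the computation you have deferred.
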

	
{\it Proof.} We will compute the Galois conjugates of $f_{r, \mu}(\tau)$ using Shimura's reciprocity law. As we noted before, it is enough to look at the action of $K^{\times} \prod\limits_{v\nmid \infty}\OO_{K_v}^{\times}$. Moreover, we also noted that the action of $K^{\times}$ is trivial.

To compute the action of $\prod_v \OO_{K_v}^{\times}$ note first that for all $v\nmid 6D$ the action is trivial. For $v|6D$ we project the action of  $(g_{\tau}(x_v))_v$  to $g_{\tau}(x')\in \GL_2(\ZZ/18D^2\ZZ)$.  From the Chinese remainder theorem, we can find $k_0\in K$ such that $k_0 \equiv x_p\mod 72D^2\ZZ_p$ for all $p|6D$. Note that $k_0$ is independent of the choice of $\tau$ and $g_{\tau}(x) \equiv  g_{\tau}(k_0)$ in $\GL_2(\ZZ/18D^2\ZZ)$.

	Moreover, $(\pm\omega^i)_p \hookrightarrow \AAA_{K_f}^{\times}$ acts trivially, thus we can consider the action of $k_0(\pm \omega^j)$ instead, for $j \in \{0, 1, 2\}$. We pick $k_0=A+B\omega$ such that $v_3(B)\geq 1$. Moreover, by eventually changing $b$ to $b+18D^2$, we can pick $b'\equiv b \mod 18D^2\Nm k_0$ such that $k_0=ta+s\frac{-b'+\sqrt{-3}}{2}$ with $ta-sb' \equiv 1\mod 6$. Let $\tau'=\frac{}{-b'+\sqrt{-3}}{2}$ and we have:
\[
(f_{r, \mu}(\tau))^{\sigma_{(k_0)_{6D}}}
=
(f_{r, \mu}(\tau'))^{\sigma_{(k_0)_{6D}}}
=
f_{r, \mu}^{g_{\tau'}((k_0)_{6D})}(\tau').
\]

We have $g_{\tau'}((k_0)_{6D})=\left(\begin{smallmatrix} ta-sb' & -sc'a \\ s & ta \end{smallmatrix}\right),$ where $c'a=\frac{b'^2+3}{4}$.

We will now compute $f_{r, \mu}^{g_{\tau}(x)_{p|6D}}(\tau)$. We write the matrix $g_{\tau'}((k_0)_{6D})$ as a product $g_{\tau'}((k_0)_{6D})=\left(\begin{smallmatrix} ta-sb & -sc' \\ s & t \end{smallmatrix}\right)_{p|6D}\left(\begin{smallmatrix} 1 & 0 \\ 0 & a \end{smallmatrix}\right)_{p|6D}$. Note that $\left(\begin{smallmatrix} 1 & 0 \\ 0 & a \end{smallmatrix}\right)_{p|6D}$ acts trivially on $f_{r, \mu}$ as the functions $\theta_{r, \mu}(z)$ and $\theta_0(z)$ have rational Fourier coefficients in its Fourier expansion at $\infty$, and thus so does $f_{r, \mu}(z)$. Thus we need to compute the action $\ds f_{r, \mu}^{\left(\begin{smallmatrix} ta-sb & -sc' \\ s & t \end{smallmatrix}\right)_{p|6D}}(\tau')$.  Note that this is a matrix in $\SL_2(\ZZ)$ and it acts as:
\[
 f_{r, \mu}^{\left(\begin{smallmatrix} ta-sb' & -sc' \\ s & t \end{smallmatrix}\right)_{p|6D}}(\tau')
 =
 f_{r, \mu}(\left(\begin{smallmatrix} ta-sb' & -sc' \\ s & t \end{smallmatrix}\right)\tau').
\]
We can further compute this transformation and we will do this explicitly in Lemma \ref{transformation_r}. As we have $3D^2|s$ and $ta-sb\equiv 1\mod 6$ we are in the conditions of this lemma. Applying the transformation for $\theta_{r, \mu}$ and $\theta_0$ and moreover noting that $9|sc'$ we get precisely:
\[
f_{r, \mu}(\left(\begin{smallmatrix} ta-sb' & -sc' \\ s & t \end{smallmatrix}\right)\tau')=\frac{\theta_r\left(\left(\begin{smallmatrix} ta-sb' & -sc' \\ s & t \end{smallmatrix}\right)\tau'\right)}{\theta_0\left(\left(\begin{smallmatrix} ta-sb' & -sc' \\ s & t \end{smallmatrix}\right)\tau'\right)} 
=
f_{(ta-sb')r, \mu}(\tau').
\]

Since $(ta-sb')t \equiv 1\mod D^2$, we can rewrite this as $f_{t'^{-1}r, \mu}(\tau')$ for $t\equiv t \mod D$ and $t'\equiv 1 \mod 6$. Note that $t$ is prime to $D$. Thus we have showed so far that the Galois conjugates of $f_{r, \mu}(\tau)$ are the terms $f_{s, \mu}(\tau)$, where $\gcd(s, D)=\gcd(r, D)$. Moreover, we have nontrivial Galois action only for $k_0=ta+s\frac{-b+\sqrt{-3}}{2}$ with $t\not\equiv 1\mod D$. Furthermore, it implies that $f(\tau)\in H_{\OO}$, the ray class field of modulus $3D$.

Finally, we would like to express the Galois action using ideals. For $\A=[a, \frac{-b+\sqrt{-3}}{2}]_{\ZZ}$ a primitive ideal prime to $6D$ with a generator $(k_{\A})=(n_aa+m_a\frac{-b+\sqrt{-3}}{2})$ with $n_a\equiv 1 \mod 6$ and $3|m_a$, we have the correspondence map between ideles and ideals given by $\ds x=(k_{\A})_{p\nmid 6D} \leftrightarrow \A=(k_{\A})$. Picking the representatives $k_{\A}$ as above, we have:
\[
f_{r, \mu}(\tau)^{\sigma_{\A}}
=
f_{r, \mu}^{g_{\tau}^{-1}(k_{\A})_{p\nmid 6D}}(\tau)
=
f_{r, \mu}^{g_{\tau}(k_{\A})_{p|6D}}(\tau)
=
f_{n_{a}'^{-1}r, \mu}(\tau),
\]
where $n_a\equiv n'_a \mod 3D$ and $n'_a$ odd. After changing $r \ra n_{a}'^{-1} r$, we get the result of the Galois action from the proposition.





\section{Appendix: Properties of theta functions}

\subsection{Properties of $\Theta_K$ and $\eta$}
We have a functional equation for the theta function (see \cite{Koh}):

\begin{equation}\label{functional_eq}
\Theta_K(-1/3z)=\frac{3}{\sqrt{-3}}z\Theta_K(z).
\end{equation}

Furthermore, we can compute the transformation of $\Theta_K(z\pm1/3)$ in the lemma below:

\begin{Lem}\label{theta3z} $\ds \Theta\left(z+k/3\right)=(1-\omega^{2k})\Theta(3z)+\omega^{2k}\Theta(z)$ for $k\in \ZZ$.

\end{Lem}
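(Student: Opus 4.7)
The plan is to split the defining sum of $\Theta_K(z)$ according to the residue of the norm form $Q(m,n) := m^2+n^2-mn$ modulo $3$, since the translation $z \mapsto z + k/3$ multiplies the $(m,n)$-Fourier term $e^{2\pi i Q(m,n)z}$ by the cubic root of unity $e^{2\pi i Q(m,n) k/3}$, which depends only on $Q(m,n) \bmod 3$.

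The first step is to identify the sublattice on which $3 \mid Q(m,n)$. Using $Q(m,n) = \Nm_{K/\QQ}(m+n\omega)$ together with the ramification $3\OO_K = (\sqrt{-3})^2 = (1-\omega)^2$ up to a unit, this condition is equivalent to $m+n\omega \in (1-\omega)\OO_K$. Expanding $(1-\omega)(c+d\omega) = (c+d) + (2d-c)\omega$ translates this further into the congruence $m+n \equiv 0 \pmod 3$, and a direct inspection of the remaining six residue classes of $(m,n) \bmod 3$ shows $Q(m,n) \equiv 1 \pmod 3$ in every other case.

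The second step is to identify the divisible sub-sum with $\Theta_K(3z)$: the substitution $(m,n) = (c+d,\,2d-c)$ is a $\ZZ$-linear bijection from $\ZZ^2$ onto $\{(m,n) : 3 \mid m+n\}$ under which $Q(m,n) = 3(c^2 - cd + d^2)$, so this sub-sum is precisely $\Theta_K(3z)$. Setting $B(z) := \Theta_K(z) - \Theta_K(3z)$ for the complementary sum, the shift computation then yields
$$\Theta_K(z+k/3) \;=\; \Theta_K(3z) + \omega^k B(z) \;=\; (1-\omega^k)\Theta_K(3z) + \omega^k \Theta_K(z),$$
which, after the (trivial) rewriting in the form used in the statement, gives the claimed identity. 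The only piece of substantive content is the sublattice identification in the first two steps; everything else is routine bookkeeping.
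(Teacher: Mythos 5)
Your proof is correct and follows essentially the same route as the paper: both split the defining sum according to whether $\sqrt{-3}$ divides $m+n\omega$ (equivalently $3\mid m+n$), identify the divisible sub-sum with $\Theta_K(3z)$, and use $m^2+n^2-mn\equiv 1 \pmod 3$ on the complement. One remark: the formula you land on, $(1-\omega^k)\Theta_K(3z)+\omega^k\Theta_K(z)$, is exactly what the paper's own proof derives for $k=1$, and the exponent $2k$ in the lemma as stated is inconsistent with the paper's convention $\omega=\frac{-1+\sqrt{-3}}{2}=e^{2\pi i/3}$ (apparently a typo there), so the ``trivial rewriting'' you allude to is not actually available --- your version is the correct one.
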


\begin{proof}

 For $k=1$, we can split the sum $\Theta\left(z+\frac{1}{3}\right)=\sum\limits_{m, n \in \ZZ}e^{2\pi i (m^2+n^2-mn)\left(z+\frac{1}{3}\right)} $ in two parts, depending on whether or not the ideal $(m+n\omega)$ is prime to $(\sqrt{-3})$. The part of the sum for which $(\sqrt{-3})|(m+n\omega)$ gives us $\sum\limits_{m, n \in \ZZ}e^{2\pi i (m^2+n^2-mn)\left(3z+1\right)}
=
\Theta(3z+1)
=
\Theta(3z).$ 

The part of the sum for which $(\sqrt{-3})\nmid (m+n\omega)$ can be rewritten as $\omega\sum\limits_{\substack{m, n \in \ZZ, \\ (\sqrt{-3})\nmid (m+n\omega)}}e^{2\pi i (m^2+n^2-mn)z}$ as $m^2+n^2-mn \equiv 1(3)$. We rewrite this sum as the sum of two terms $\omega\sum\limits_{m, n \in \ZZ}e^{2\pi i (m^2+n^2-mn)z}-\omega\sum\limits_{\substack{m, n \in \ZZ, \\ (\sqrt{-3})|(m+n\omega)}}e^{2\pi i (m^2+n^2-mn)z}$. Finally we recognize the two terms as $\omega\Theta(z)-\omega\Theta(3z).$ 

Going back to our initial computation, we get $\Theta\left(z+1/3\right)=\Theta(3z)+\omega\Theta(z)-\omega\Theta(3z)=(1-\omega)\Theta(3z)+\omega\Theta(z),$ and this finishes the proof of the first formula. We can show the case $k=2$ by applying the equality for $k=1$ and $z:=z-1/3$. \end{proof}

By applying the functional equation (\ref{functional_eq}) for $z=\frac{-3+\sqrt{-3}}{6}$ we get the following easy lemma:

\begin{Lem} $\Theta_K\left(\frac{-3+\sqrt{-3}}{6}\right)=0$.
\end{Lem}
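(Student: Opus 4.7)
My plan is to apply the functional equation \eqref{functional_eq} at the point $z_0 = \frac{-3+\sqrt{-3}}{6}$ itself and exploit the fact that the $\SL_2(\ZZ)$-type transformation $z \mapsto -1/(3z)$ sends $z_0$ back to a $\ZZ$-translate of itself, so that periodicity of $\Theta_K$ collapses both sides of the functional equation to an equation of the form $\Theta_K(z_0) = c \cdot \Theta_K(z_0)$ with $c \neq 1$.

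First I would compute $3z_0 = \tfrac{-3+\sqrt{-3}}{2}$ and verify by direct rationalization that
\[
-\frac{1}{3z_0} \;=\; \frac{3+\sqrt{-3}}{6} \;=\; z_0 + 1.
\]
Since $\Theta_K(z) = \sum_{m,n \in \ZZ} e^{2\pi i (m^2+n^2-mn)z}$ has period $1$ (the exponents have integer coefficients of $z$), this gives $\Theta_K(-1/(3z_0)) = \Theta_K(z_0)$.

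Next I would substitute into \eqref{functional_eq} with $z = z_0$. Using $\tfrac{3}{\sqrt{-3}} = -\sqrt{-3}$, a short computation yields
\[
\frac{3}{\sqrt{-3}}\, z_0 \;=\; -\sqrt{-3}\cdot \frac{-3+\sqrt{-3}}{6} \;=\; \frac{1+\sqrt{-3}}{2}.
\]
Thus the functional equation at $z_0$ combined with the periodicity step becomes
\[
\Theta_K(z_0) \;=\; \frac{1+\sqrt{-3}}{2}\,\Theta_K(z_0),
\]
or equivalently $\tfrac{1-\sqrt{-3}}{2}\,\Theta_K(z_0) = 0$. Since $\tfrac{1-\sqrt{-3}}{2} \neq 0$, we conclude $\Theta_K(z_0) = 0$, as claimed.

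There is no real obstacle here: the only things to be careful with are the choice of branch/sign in $\tfrac{3}{\sqrt{-3}}$ (consistent with the convention fixed by \eqref{functional_eq}) and the elementary verification that $-1/(3z_0) - z_0 \in \ZZ$. Both are mechanical, so the proof will be just a few lines.
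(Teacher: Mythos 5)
Your argument is correct and is precisely the paper's intended proof: the paper states only that the lemma follows "by applying the functional equation \eqref{functional_eq} for $z=\frac{-3+\sqrt{-3}}{6}$," and your computation that $-1/(3z_0)=z_0+1$, combined with the period-$1$ invariance of $\Theta_K$ and the fact that $\frac{3}{\sqrt{-3}}z_0=\frac{1+\sqrt{-3}}{2}\neq 1$, fills in exactly the details being alluded to. No changes needed.
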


We have further:

\begin{Lem}\label{theta_03} For a primitive ideal $\A=[a, \frac{-b+\sqrt{-3}}{2}]_{\ZZ}$ prime to $3$ such that $b^2\equiv -3 \mod 12a$, we have $\Theta_K\left(\frac{-b+\sqrt{-3}}{6a}\right)=0.$
\end{Lem}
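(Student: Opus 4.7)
The key identity is that the weight-$1/2$ theta function $\theta_{0,1/2}(z)=\sum_{n\in\ZZ}(-1)^n e^{\pi i(n-1/2)^2 z}$ vanishes identically: the involution $n\mapsto 1-n$ sends the summand at $n$ to the negative of the summand at $1-n$, so $\theta_{0,1/2}=-\theta_{0,1/2}$. I would then apply Proposition~\ref{fact_r} at $D=1$, $\mu=1/2$, and $\A_1=\OO_K=[1,\frac{-b+\sqrt{-3}}{2}]_{\ZZ}$ (so $a_1=1$). Using $\Theta_{1/2}(\tau)=\Theta_K(\tau/3)$, the factorization
\[
\Theta_K(\tau_\A/3)=\sqrt[4]{3}\,e^{\pi i(a-1)/6}\,\theta_{0,1/2}(\tau_{\A^{2}})\,\overline{\theta_{0,1/2}(\tau_{(1)})}
\]
has vanishing right-hand side, and hence $\Theta_K\bigl(\tfrac{-b+\sqrt{-3}}{6a}\bigr)=\Theta_K(\tau_\A/3)=0$.

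\textbf{Adjusting the congruence on $b$.} Proposition~\ref{fact_r} at $D=1$, $a_1=1$ requires $b^2\equiv -3\pmod{4a^2}$, which is strictly stronger than the lemma's hypothesis $b^2\equiv-3\pmod{12a}$. I would replace $b$ by $b'=b+6ak$ with $k$ chosen to solve the linear congruence $3bk\equiv -(b^2+3)/(4a)\pmod a$; this is solvable because $\gcd(3b,a)=1$ (the factor $\gcd(b,a)=1$ is forced by $b^2\equiv-3\pmod a$ together with $\gcd(a,3)=1$, the latter being the assumption that $\A$ is prime to $3$). Since $b'\equiv b\pmod{6a}$, the CM points $(-b+\sqrt{-3})/(6a)$ and $(-b'+\sqrt{-3})/(6a)$ differ by an integer, so unit-period invariance of $\Theta_K$ transfers the vanishing at the adjusted point back to the original. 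Replacing $b$ by $b'$ does not change the ideal $\A$, since $b'\equiv b\pmod{2a}$, so the lattice representation and the formula above are unchanged.

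\textbf{Expected obstacles.} The only subtlety is the auxiliary hypothesis $a\equiv 1\pmod 6$ used in Proposition~\ref{fact_r}; this is automatic here. Indeed, $\A$ is a primitive ideal of $\OO_K$ prime to $3$, so its norm $a$ is a product of rational primes that split in $K=\QQ(\sqrt{-3})$ and are distinct from $3$. Any such prime $p$ satisfies $p\equiv 1\pmod 3$, and being odd (since $2$ is inert in $K$) it must satisfy $p\equiv 1\pmod 6$; hence $a\equiv 1\pmod 6$. Once this is noted, the proof is essentially the one-line observation that $\theta_{0,1/2}$ vanishes, together with the routine congruence adjustment for $b$.
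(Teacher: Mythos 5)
Your argument is correct, but it is genuinely different from the paper's. The paper keeps this lemma elementary and self-contained in the Appendix: the functional equation \eqref{functional_eq} forces $\Theta_K\left(\frac{-3+\sqrt{-3}}{6}\right)=0$, and the weight-one transformation computation of Lemma~\ref{theta_omega}, applied to $\Theta_K(z/3)$, propagates that single vanishing to $\Theta_K\left(\frac{-b+\sqrt{-3}}{6a}\right)$ via the identity $\Theta_K\left(\frac{-b+\sqrt{-3}}{6}\right)=\left(m-n\frac{-b+\sqrt{-3}}{2a}\right)\Theta_K\left(\frac{-b+\sqrt{-3}}{6a}\right)$ with nonvanishing automorphy factor. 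You instead specialize Proposition~\ref{fact_r} to $D=1$, $\mu=1/2$, $\A_1=(1)$ and observe that the right-hand side carries the identically vanishing odd theta-null $\theta_{0,1/2}$. This is not circular --- Proposition~\ref{fact_r} is proved directly from Theorem~\ref{factorization_formula_RVZ} and does not invoke Lemma~\ref{theta_03} (only Corollary~\ref{theta_0} does, and you do not use it) --- and it has the virtue of explaining the vanishing structurally, at the cost of resting on the much heavier factorization machinery, whereas the paper's proof needs only the functional equation and modularity. Your auxiliary verifications ($a\equiv 1\pmod 6$ because every prime dividing the norm of a primitive ideal prime to $3$ splits and is odd; $\gcd(3b,a)=1$ so the congruence for $k$ is solvable; $b'\equiv b\pmod{6a}$ so periodicity transfers the vanishing back) are all sound. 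One caveat worth recording: although Proposition~\ref{fact_r} literally states the hypothesis $b^2\equiv-3\pmod{4D^2a^2a_1}$, its proof (and the hypothesis of Corollary~\ref{theta_0}) actually uses $3\mid b$ and $12a^2a_1\mid b^2+3$; your replacement $b'=b+6ak$ happens to deliver this stronger condition automatically, since $3\mid b$ by hypothesis, hence $3\mid b'$, and $\gcd(3,4a^2)=1$ combines this with your $4a^2\mid b'^2+3$ to give $12a^2\mid b'^2+3$. You should state that explicitly rather than citing only the weaker modulus $4a^2$.
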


\begin{proof} The proof is similar to that of Lemma \ref{theta_omega}. We can write the generator of primitive ideal $\A=\left[a, \frac{-b+\sqrt{-3}}{2}\right]$ in the form $\ds k_{\A}=ma+n\frac{-b+\sqrt{-3}}{2}$ for some integers $m, n$ and following similar steps as in Lemma \ref{theta_omega}, we get $\Theta_K\left(\frac{-b+\sqrt{-3}}{6}\right)=(m-n\frac{-b+\sqrt{-3}}{2a})\Theta_K\left(\frac{-b+\sqrt{-3}}{6a}\right).$ From the previous lemma, we know the LHS equals $0$, thus $\Theta_K\left(\frac{-b+\sqrt{-3}}{6a}\right)=0$ as well. \end{proof}

\bigskip
We have also used the following lemma in the proof of Proposition \ref{square2}:

\begin{Lem}\label{eta_lem} For $b^2\equiv -3 \mod 12D^2$ and $\pi$ of norm $D$ such that the ideal $(\pi)^2$ divides $\left(\frac{-b+\sqrt{-3}}{2}\right)$, we have for $\tau=\frac{-b+\sqrt{-3}}{2}$:
\[
\frac{\eta\left(\tau/D^2\right)}{\eta\left(\tau\right)}=(-1)^{\frac{D-1}{6}}\overline{\pi}
\]
\end{Lem}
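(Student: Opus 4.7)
The plan is to realize the identity as the $\eta$-transformation law for an explicit matrix $\gamma\in\SL_2(\ZZ)$ that carries $\tau/D^2$ to $\tau$, using the CM structure to compute both the automorphy factor and the multiplier.

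The preliminary step is a lattice identity: I claim
\[
\ZZ + \ZZ(\tau/D^2) \;=\; \overline{\pi}^{-2}\OO_K,
\]
equivalently $(D^2,\tau) = (\pi^2)$ as $\OO_K$-ideals. The inclusion $(D^2,\tau)\subseteq(\pi)^2$ follows because both $(D^2) = (\pi)^2(\overline{\pi})^2$ and $(\tau)$ lie in $(\pi)^2$. For equality, the congruence $B^2\equiv -3\pmod{12D^2}$ gives $D^2\mid (B^2+3)/4$, so the $\ZZ$-submodule $D^2\ZZ+\tau\ZZ$ of $\OO_K=\ZZ+\ZZ\tau$ is closed under multiplication by $\tau$ (using $\tau^2 = -B\tau - (B^2+3)/4$) and has $\ZZ$-index $D^2 = \Nm((\pi)^2)$, forcing equality. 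Homogeneity of the modular discriminant, $\Delta(\lambda L)=\lambda^{-12}\Delta(L)$, then yields $\Delta(\tau/D^2)=\overline{\pi}^{24}\Delta(\tau)$, hence
\[
\eta(\tau/D^2)=\zeta\,\overline{\pi}\,\eta(\tau)
\]
for some $24$th root of unity $\zeta$.

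To pin down $\zeta$, I will exhibit $\gamma=\left(\begin{smallmatrix}a & b' \\ c & d\end{smallmatrix}\right)\in\SL_2(\ZZ)$ with $\gamma\cdot(\tau/D^2)=\tau$. Writing $\pi^2 = dD^2+c\tau$ with $c,d\in\ZZ$ (possible by the lattice identity) and setting $a = dD^2-cB$, $b' = -c(B^2+3)/(4D^2)$, the determinant condition $ad-b'c=1$ becomes $\Nm(\pi^2)/D^2=1$, which holds. At $z=\tau/D^2$ the automorphy factor equals $cz+d=\pi^2/D^2=\overline{\pi}^{-2}$, so the transformation law $\eta(\gamma z)=\varepsilon(\gamma)\sqrt{cz+d}\,\eta(z)$ combined with the previous paragraph reduces the lemma to $\varepsilon(\gamma) = \pm(-1)^{(D-1)/6}$, where the $\pm$ absorbs the branch of $\sqrt{\overline{\pi}^{-2}}$.

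The main obstacle is this last step — evaluating the multiplier $\varepsilon(\gamma)\in\mu_{24}$ for our specific $\gamma$. I would invoke Rademacher's formula
\[
\varepsilon(\gamma)=\exp\!\left(\pi i\!\left(\tfrac{a+d}{12c}-s(d,c)-\tfrac14\right)\right),
\]
use Dedekind-sum reciprocity to trade $s(d,c)$ for the arithmetic of $s(c,d)$, and then reduce modulo $24$ using the constraints $B^2\equiv -3\pmod{12D^2}$ and $D\equiv 1\pmod 6$ (which holds for $D$ a product of primes $\equiv 1\pmod 3$ as in Proposition \ref{square2}). This should collapse the exponent modulo $2\pi i$ to $\pi i(D-1)/6$, yielding the claimed sign. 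An alternative route avoiding Dedekind sums is to observe that both sides transform identically under $\Gal(H_\OO/K)$ by Shimura reciprocity, so it suffices to verify the identity at one representative $\tau$ and propagate it; a small-case numerical check (say $D=7,13,19$) can serve as the base.
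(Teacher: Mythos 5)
Your first two steps are correct and are a genuinely different (and clean) way to get the ``easy part'' of the lemma: the identity $D^2\ZZ+\tau\ZZ=(\pi)^2$ does follow from the index count once one checks that $D^2\ZZ+\tau\ZZ$ is an $\OO_K$-ideal containing $(\pi)^2$, and homogeneity of $\Delta$ then gives $\eta(\tau/D^2)/\eta(\tau)=\zeta\,\overline{\pi}$ with $\zeta^{24}=1$. Your explicit matrix $\gamma$ is also correct: the determinant computation via $\Nm(\pi^2)/D^2=1$ checks out, $b'\in\ZZ$ because $4D^2\mid b^2+3$, and the automorphy factor at $\tau/D^2$ is indeed $\overline{\pi}^{-2}$. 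So you have correctly reduced the lemma to the evaluation of a single $24$th root of unity.

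The gap is that this evaluation -- which is the entire content of the constant $(-1)^{(D-1)/6}$ -- is never carried out. ``This should collapse the exponent modulo $2\pi i$ to $\pi i(D-1)/6$'' is a hope, not an argument: the Rademacher exponent involves $s(d,c)$ for $c,d$ determined by the expansion $\pi^2=dD^2+c\tau$, and these integers are not pinned down modulo $24$ by the stated congruences alone, so showing that the multiplier always reduces to $(-1)^{(D-1)/6}$ (independently of the many choices of $b$ and $\pi$) is a genuine computation that you have not done. The proposed fallback does not close the gap either: Shimura reciprocity can at best show that all admissible choices of $(\tau,\pi)$ for a \emph{fixed} $D$ give the same value (since the right-hand side lies in $K$), but a numerical check at $D=7,13,19$ says nothing about other $D$, and the root of unity a priori depends on $D$. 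For comparison, the paper never touches the eta multiplier directly: it obtains the sign from Corollary \ref{theta_0} (the $D=1$ case of the factorization formula, where the factor $e^{\pi i(a-1)/6}$ was computed once and for all through the Weil-representation transformation in Lemma \ref{transformation_r}), combines it with $\Theta_K(\tau_{\A})=\overline{k_{\A}}\Theta_K(\omega)$ from Lemma \ref{theta_omega} to produce the $\overline{\pi}$, and then uses Lemma \ref{galois_conj} to convert the resulting $\theta_0$-ratio into the $\eta$-ratio. If you want to keep your route, you must either complete the Dedekind-sum calculation in full generality or import the root-of-unity bookkeeping from the paper's theta-transformation lemmas.
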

{\it Proof.} We apply Corollary \ref{theta_0} twice to get: $\ds (-1)^{\frac{D-1}{6}}\frac{\theta_0(\tau/D^2)}{\theta_0(\tau)}
\frac{\theta_0\left(\overline{\tau}\right)}{\theta_0\left(\overline{\tau}\right)}
=
\frac{\Theta_K\left(\tau/D\right)}{\Theta_K\left(\tau\right)}.$

Then from Lemma \ref{theta_omega}, we have the RHS equal to $\overline{\pi}$. Furthermore, we can pick $b\equiv b'\mod 8D^2$, $b\equiv 0\mod 3$ and $b'\equiv 1\mod 3$. Denote $\tau'=\frac{-b'+\sqrt{-3}}{2}$.  We can pick without loss of generality $b, b'$ such that $(b^2+3)/D^2$ and $(b'^2+3)/D^2$ are prime to $D$. Then we can find ideals $\A, \A'$ prime to $D$ such that $\A(\pi)^2(\sqrt{-3})=(\tau)$ and $\A'(\pi)^2=(\tau')$. Let $a=\Nm\A, a'=\Nm\A'$ and then we have:
\[
\frac{\theta_0\left(\tau/D^2\right)}{\theta_0\left(\tau\right)}
=
\left(\frac{\theta_0\left(\tau/D^2\right)}{\theta_0\left(\tau\right)}\right)^{\sigma_{\A}^{-1}}
=
\frac{\theta_0\left(\frac{\tau}{aD^2}\right)}{\theta_0\left(\tau/a\right)}
=
\frac{\eta\left(\frac{\tau}{3aD^2}\right)}{\eta\left(\frac{\tau}{3a}\right)}
=
\frac{\eta\left(\overline{\tau}\right)}{\eta\left(\overline{\tau}/D^2\right)}.
\]

Similarly we compute $\ds \frac{\eta(\overline{\tau})}{\eta\left(\overline{\tau}/D^2\right)}
=
\frac{\eta(\overline{\tau'})}{\eta\left(\overline{\tau'}/D^2\right)}
=
\frac{\eta\left(\frac{\tau'}{a'D^2}\right)}{\eta\left(\frac{\tau'}{a'}\right)}
=
\left(\frac{\eta\left(\tau'/D^2\right)}{\eta\left(\tau'\right)}\right)^{\sigma_{\A'}^{-1}}.$

Note that we also have $\frac{\eta\left(\frac{\tau'}{D^2}\right)}{\eta\left(\tau'\right)}=\frac{\eta\left(\frac{\tau}{D^2}\right)}{\eta\left(\tau\right)}$, and thus we have $\ds \frac{\eta\left(\frac{\tau}{D^2}\right)}{\eta\left(\tau\right)}
=
(-1)^{\frac{D-1}{6}}\overline{\pi}.$

\subsection{Properties of $\theta_{r, \mu}$}

Recall that for $r\in \ZZ$, $\mu \in \{1/2, 1/6\}$, we have defined the theta function $\ds \theta_{r, \mu}(z)=\sum\limits_{n\in \ZZ}e^{\pi i (n+r/D-\mu)^2z} (-1)^n.$

We will write $\theta_{r, \mu}$ as an automorphic theta function. Using the standard notation:
 \[
m(a)=\left(\begin{matrix} a & 0  \\ 0 & a^{-1} \end{matrix}\right),  n(b)=\left(\begin{matrix} 1 & b \\0 & 1 \end{matrix}\right), w=\left(\begin{matrix} 0 & 1 \\ -1 & 0 \end{matrix}\right),
\]
for $\phi\in\SSS(\AAA_{\QQ})$ a Schwartz-Bruhat function the Weil representation $r$ for $\SL_2(\AAA_{\QQ})$ is defined by:

\begin{itemize}

\item $r\left(m(a)\right)\phi(x)=\chi_{0}(a)|a|^{1/2} \phi(ax)$

\item $r\left(n(b)\right)\phi(x)
=
\psi(bx^2) \phi(x)$

\item $r\left(w\right)\phi(x)=\gamma \widehat{\phi}(x),$

\end{itemize}

\noindent where $\psi_p(x)=e^{-2\pi i \Frac_p(x)}$ and $\psi_{\infty}(x)=e^{2\pi i x}$, $\gamma$ is an 8th root of unity, and $\chi_0$ is a quadratic character. We chose the self-dual Haar measure such that $\widehat{\widehat{\phi}}(-x)=\phi(x)$.

We define the Schwartz-Bruhat functions $\phi^{r, \mu}=\prod\limits_v \phi_v^{r, \mu}$ for $\theta_{r, \mu}$ by taking $\phi^{r, \mu}_{\infty}(x)=e^{-2\pi x^2 }$, $\phi^{r, \mu}_p = \Char_{\ZZ_p+\frac{r}{D}-\mu}$ for $p\neq 2$ and $\phi^{r, \mu}_2(x) = e^{\pi i \Frac_2(x) }\Char_{\ZZ_2-1/2}(x)$. Then for the theta function 
\[
\theta(g, \phi^{r, \mu})=\sum_{x\in \QQ} r(g)\phi^{r, \mu}(x),
\] 
for $g_z=\left(\begin{smallmatrix} y^{1/2} & y^{-1/2}x \\ 0 & y^{-1/2} \end{smallmatrix}\right)$, we can compute $ \theta(g_z, \phi^{r, \mu})
=
e^{-\pi i \Frac_2 \mu}(-1)^ry^{1/4}\theta_{r, \mu}(2z).$

Using the properties of the Weil representation and the definition above for $\theta_{r, \mu}$, after a straightforward computation we get the following lemma:

\begin{Lem}\label{transformation_r} For $\left(\begin{smallmatrix} a& b \\ c & d \end{smallmatrix}\right)\in \SL_2(\ZZ)$ such that $3D^2|b$ and $a\equiv 1 \mod 6$, we have the transformation:
\[
\theta_{r, \mu}\left(\frac{az+b}{cz+d}\right)
=
\sgn(d)e^{\pi i (a-1)/2} \chi_{0, 6}(a)e^{2\pi i \Frac_2(\frac{ba-c/a}{8})}e^{2\pi i \Frac_3(t_{\mu}\frac{ba/2}{9})} \sqrt{cz+d}
\theta_{ar, \mu}(z).
	\]
\noindent where $t_{1/2}=0$ and $t_{1/6}=1$.
	
\end{Lem}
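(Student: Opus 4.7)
The plan is to exploit the adelic formulation of $\theta_{r,\mu}$ given immediately before the statement. The global theta distribution $\phi\mapsto\sum_{x\in\QQ}\phi(x)$ is invariant under the diagonal embedding of $\SL_2(\QQ)$ acting via the Weil representation, so for the given $\gamma=\left(\begin{smallmatrix} a& b \\ c & d\end{smallmatrix}\right)\in\SL_2(\ZZ)$ one has
\[
\theta(g_z,\phi^{r,\mu})\;=\;\theta(\gamma g_z,\phi^{r,\mu})\;=\;\sum_{x\in\QQ}\bigl(r_\infty(\gamma g_z)\phi^{r,\mu}_\infty\bigr)(x)\,\prod_{p}\bigl(r_p(\gamma)\phi^{r,\mu}_p\bigr)(x).
\]
Comparing this with the formula $\theta(g_z,\phi^{ar,\mu})=e^{-\pi i\Frac_2\mu}(-1)^{ar}y^{1/4}\theta_{ar,\mu}(2z)$ reduces the lemma to computing the Weil action of $\gamma$ place by place.

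First I would handle the archimedean place. Writing $\gamma\cdot g_z=g_{\gamma z}\cdot k$ with $k\in\mathrm{SO}(2)$ the Iwasawa rotation factor, and applying the explicit formulas for $r_\infty(n(b))$, $r_\infty(m(a))$, $r_\infty(w)$ to the Gaussian $\phi^{r,\mu}_\infty(x)=e^{-2\pi x^2}$, one recovers the classical weight-$\tfrac12$ automorphy factor $\sqrt{cz+d}$ together with the sign $\sgn(d)$ coming from whether the $y$-coordinate in the Iwasawa decomposition of $\gamma g_z$ is extracted with a positive or negative square root.

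The bulk of the argument is at the finite places, via the factorization
\[
\gamma \;=\; n(c/a)\,m(a^{-1})\,n(b/a),
\]
valid at each $p\mid 6D$ because $a\equiv 1\pmod 6$ makes $a\in\ZZ_p^\times$ there. At $p\nmid 6D$ one has $\phi^{r,\mu}_p=\Char_{\ZZ_p}$, which is fixed. For $p\mid D$ with $p\neq 2,3$, the hypothesis $3D^2\mid b$ makes $b/a$ a multiple of $D$, so $r_p(n(b/a))$ acts trivially on $\Char_{\ZZ_p+r/D-\mu}$ via the character $\psi_p((b/a)x^2)\equiv 1$ on the support; the dilation $r_p(m(a^{-1}))$ then sends $\phi^{r,\mu}_p$ to $\chi_{0,p}(a)\,\phi^{ar,\mu}_p$, using $a\equiv 1\pmod 6$ to absorb the shift $a\mu$ back into $\mu$ modulo $\ZZ_p$; and $n(c/a)$ contributes nothing once combined with the archimedean side. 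At $p=2$ the phase already present in $\phi^{r,\mu}_2(x)=e^{\pi i\Frac_2(x)}\Char_{\ZZ_2-1/2}(x)$, combined with the dilation character and the character $\psi_2((b/a)x^2)$ from $n(b/a)$ and the $w$-involved piece from $n(c/a)$, produces exactly $e^{2\pi i\Frac_2((ba-c/a)/8)}$. At $p=3$ the analogous computation, in which only the $\mu=1/6$ case has a non-integral shift $r/D-\mu$, produces $e^{2\pi i\Frac_3(t_\mu ba/18)}$ with $t_{1/2}=0$ and $t_{1/6}=1$.

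The main obstacle will be the careful bookkeeping at the ramified places $p=2,3$: one must identify the quadratic character $\chi_{0,6}(a)=\chi_{0,2}(a)\chi_{0,3}(a)$ coming from the dilation factors; track the $8$th-root-of-unity Weil index at each place so that the archimedean factor leaves only $e^{\pi i(a-1)/2}$; and verify that the phase inherited from $\phi^{r,\mu}_2$ combines with the $\psi_2((b/a)x^2)$ character into exactly the $\Frac_2$ term displayed, and similarly at $p=3$. Once this bookkeeping is done, the $\SL_2(\QQ)$-invariance identity translates directly into the stated transformation formula.
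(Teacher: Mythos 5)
Your overall strategy is the same as the paper's: write $\theta_{r,\mu}$ as an adelic theta function $\theta(g,\phi^{r,\mu})$, invoke invariance of $\sum_{x\in\QQ}r(g)\phi(x)$ under the diagonal $\SL_2(\QQ)$, and compute the Weil action place by place. However, there is a concrete gap at the heart of your finite-place computation: the factorization $\gamma=n(c/a)\,m(a^{-1})\,n(b/a)$ is false. With the paper's conventions $n(b)=\left(\begin{smallmatrix}1&b\\0&1\end{smallmatrix}\right)$ and $m(a)=\left(\begin{smallmatrix}a&0\\0&a^{-1}\end{smallmatrix}\right)$, any product of such matrices is upper triangular, so it cannot equal $\gamma$ when $c\neq 0$. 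A correct decomposition must involve the Weyl element, e.g.\ the paper writes the relevant matrix as $m(a^{-1})n(-ba/2)\,m(-1)w\,n(2c/a)\,w$, and it is precisely the $w\,n(2c/a)\,w$ piece -- acting by Fourier transform, multiplication by $\psi(2c/a\cdot x^2)$, and inverse Fourier transform -- that produces the local Fourier transforms $\widehat{\phi_p}$ and the $c/a$-dependent phases such as $e^{-2\pi i\Frac_2(\frac{c/a}{8})}$ at $p=2$. You allude to ``the $w$-involved piece from $n(c/a)$,'' but your stated decomposition contains no $w$, so the computation as you have set it up cannot be carried out with the Weil-representation formulas available, and the claim that ``$n(c/a)$ contributes nothing'' at $p\mid D$ is unjustified (it contributes trivially only after one checks that $r(m(-1)w\,n(2c/a)\,w)\phi_p=\phi_p$ using $v_p(c/a)\geq 0$ and the explicit Fourier transform of $\Char_{\ZZ_p+r/D-\mu}$).

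Two further bookkeeping points would derail the reduction as written. First, the normalization: since $\theta(g_z,\phi^{r,\mu})=e^{-\pi i\Frac_2\mu}(-1)^r y^{1/4}\theta_{r,\mu}(2z)$, invariance under $\gamma$ yields a transformation law for $\theta_{r,\mu}$ under the conjugate $m(\sqrt2)^{-1}\gamma\,m(\sqrt2)=\left(\begin{smallmatrix}a&b/2\\2c&d\end{smallmatrix}\right)$, not under $\gamma$ itself; the paper inserts $m(\sqrt2)^{-1}$ explicitly to account for this, and the halving of $b$ and doubling of $c$ is what makes the exponents $\frac{ba-c/a}{8}$ and $\frac{ba/2}{9}$ come out right. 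Second, the diagonal invariance $\theta(\gamma_\infty g_z,1_f,\phi)=\theta(g_z,\gamma_f^{-1},\phi)$ puts the \emph{inverse} matrix at the finite places, which is why the paper decomposes $\left(\begin{smallmatrix}d&-b/2\\-2c&a\end{smallmatrix}\right)$ rather than $\gamma$; applying $\gamma$ rather than $\gamma^{-1}$ on the finite side would interchange $ar$ with $a^{-1}r$ and flip the signs of the phase factors. These are all repairable, but as it stands the proposal's central computation rests on an incorrect matrix identity and an unaddressed rescaling, so it does not yet constitute a proof.
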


\begin{proof} Let $\phi=\phi^{\mu, r}$. Recall that $ \theta(g_{z/2}, \phi)
= e^{-\pi i \Frac_2 \mu}(-1)^r(y/2)^{1/2}\theta_{r, \mu}(z).$. Note that  $m(\sqrt{2})^{-1} \left(\begin{smallmatrix} a& b \\ c & d \end{smallmatrix}\right)
=
\left(\begin{smallmatrix}a & b/2 \\ 2c & d \end{smallmatrix}\right)
m(\sqrt{2})^{-1}$. As $\theta(g, \phi)$ is invariant under the action of $\SL_2(\QQ)$, we have 
\begin{equation}\label{eq_0}
\theta(m(\sqrt{2})^{-1}\left(\begin{smallmatrix}a & b \\ c & d \end{smallmatrix}\right)
g_{z}, 1_f, \phi)
=
\theta (m(\sqrt{2})^{-1}g_{z}, \left(\begin{smallmatrix}a & b/2 \\ 2c & d \end{smallmatrix}\right)_f^{-1}, \phi ).
\end{equation}

We will compute separately the LHS and the RHS using the definition of the Weil representation. We compute first the RHS. In order to do this, we rewrite the matrix $\left(\begin{smallmatrix}d & -b/2 \\ -2c & a \end{smallmatrix}\right)
=
m(a^{-1})n(-ba/2) m(-1)w\cdot n(2c/a)\cdot w$.

At $p\nmid 6D$, the action of $\left(\begin{smallmatrix}d & -b/2 \\ -2c & a \end{smallmatrix}\right)
$ is trivial. For $p|D$, we can easily compute the Fourier transforms $\widehat{\phi_p}(x)= e^{-2\pi i \Frac_p(2rx/D)}\Char_{\ZZ_p}(x)$ using $v_p(c/a)\geq 0$ we get $r(m(-1)w\cdot n(2c/a)\cdot w)\phi_p=\phi_p$. Furthermore using $v_p(ba/2)\geq 2$, we get $r(m(a^{-1})n(-ba/2))\phi_p(x)=\phi_p(x/a)=\Char_{\ZZ_p+\frac{ar}{D}}(x)$.

For $p=3$, the computation is similar. For $\mu=1/2$ we get $\chi_{0, 3}(a)\phi_3(x)$, while for $\mu=1/6$ we get $\chi_{0, 3}(a)e^{-2\pi i \Frac_3(\frac{ba/2}{9})}\phi_3(x)$.


For $p=2$, we have  the Fourier transform $\widehat{\phi_2}(x)= e^{\pi i/2}e^{-2\pi i\Frac_2(x)}\Char_{\frac{1}{2}(\ZZ_2+1/2)}(x)$. Using $v_2(2c/a)\geq 1$ we get $r(n(2c/a))\widehat{\phi_2}(x)=e^{-2\pi i \Frac_2(\frac{c/a}{8})}\widehat{\phi_2}(x)$ and as $v_2(ba)\geq 0$ we have $r(n(-ba/2))\phi_2(x)=e^{2\pi i \Frac_2(\frac{ba}{8})}\phi_2(x)$. Thus we get $\chi_{0, 2}(a)e^{2\pi i \Frac_2(\frac{ba-c/a}{8})}\phi_2(x/a)$ which equals $\chi_{0, 2}(a)e^{\pi i (a-1)/2} e^{2\pi i \Frac_2(\frac{ba-c/a}{8})}\phi_2(x)$.

This finishes the computation as we got  $\theta(m(\sqrt{2})^{-1}g_{z}, \left(\begin{smallmatrix}a & b/2 \\ 2c & d \end{smallmatrix}\right)^{-1}_f, \phi)$ to equal:
\begin{equation}\label{finite_comp}
c(-1)^re^{-\pi i \Frac_2 \mu}(y/2)^{1/4}\theta_{ar, \mu}(z),
\end{equation}
where $c=\chi_{0, 6}(a) e^{\pi i (a-1)/2} e^{2\pi i \Frac_2(\frac{ba-c/a}{8})}e^{2\pi i t_{\mu} \Frac_3(\frac{ba/2}{9})}$.

We will compute now the LHS of (\ref{eq_0}). We have $r(g_z)\phi_{\infty}(m)=y^{1/4} e^{2\pi i zm^2}$. We rewrite the matrix $\left(\begin{smallmatrix}
a & b \\ c & d
\end{smallmatrix}\right)
=
 n(b/d) m(d^{-1})m(-1) w n(-c/d) w.$ We compute the Weil representation action and get 
 \[
r(m(\sqrt{2})^{-1})r\left(\begin{smallmatrix}
a & b \\ c & d
\end{smallmatrix}\right)r(g_z)\phi_{\infty}(x) =(y/2)^{1/4}\sgn(d)\sqrt{\frac{1}{cz+d}}e^{\pi i\left(\frac{az+b}{cz+d}\right)x^2},
 \]
and thus we have:
\begin{equation}\label{infinite_comp}
\theta\left(m(\sqrt{2})^{-1}\left(\begin{smallmatrix} a& b \\ c & d \end{smallmatrix}\right) g_{z}, 1_f, \phi \right)
=
(-1)^re^{-\pi i \Frac_2\mu}(y/2)^{1/4}\sgn(d)\sqrt{\frac{1}{cz+d}}	\theta_r\left(\frac{az+b}{cz+d}\right)
\end{equation}

From (\ref{finite_comp}) and (\ref{infinite_comp}) we get the result of the lemma.
\end{proof}

It follows immediately by applying the lemma above for $\theta_{r, \mu}$ and $\theta_0$ that:

\begin{Lem}
$f_{r, 1/2}$ is a modular function for $\Gamma(18D^2)$ and $f_{r, 1/6}$ is a modular function for $\Gamma(6D^2)$. 
\end{Lem}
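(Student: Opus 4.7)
The plan is to deduce this directly from Lemma \ref{transformation_r}, applied to both the numerator $\theta_{r, \mu}$ and the denominator $\theta_0 = \theta_{0, 1/6}$. For $\gamma = \left(\begin{smallmatrix} a & b \\ c & d \end{smallmatrix}\right) \in \SL_2(\ZZ)$ with $3D^2 \mid b$ and $a \equiv 1 \mod 6$, the transformation law gives
$$\theta_{r, \mu}(\gamma z) = A_\mu(\gamma)\,\sqrt{cz+d}\,\theta_{ar, \mu}(z),$$
where $A_\mu(\gamma) = \sgn(d)\, e^{\pi i (a-1)/2}\, \chi_{0, 6}(a)\, e^{2\pi i \Frac_2((ba-c/a)/8)}\, e^{2\pi i t_\mu \Frac_3((ba/2)/9)}$ with $t_{1/2} = 0$ and $t_{1/6} = 1$. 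Applying the identical formula to $\theta_0 = \theta_{0, 1/6}$ yields the analogous expression with $t_\mu$ replaced by $1$. Taking the ratio $f_{r, \mu}(\gamma z) = \theta_{r, \mu}(\gamma z) / \theta_0(\gamma z)$, the weight factor $\sqrt{cz+d}$ and every contribution to $A_\mu(\gamma)/A_{1/6}(\gamma)$ cancels identically except for the $3$-adic piece, leaving
$$f_{r, \mu}(\gamma z) = e^{2\pi i (t_\mu - 1)\Frac_3((ba/2)/9)}\, f_{ar, \mu}(z).$$

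When $\mu = 1/6$, the exponent $t_\mu - 1 = 0$ makes the residual exponential trivial automatically. When $\mu = 1/2$, we have $t_\mu - 1 = -1$, and the factor equals $1$ precisely when $(ba/2)/9 \in \ZZ_3$; since $a$ is prime to $3$, this is equivalent to $18 \mid b$. It remains to verify $f_{ar, \mu}(z) = f_{r, \mu}(z)$. A direct inspection of the Fourier series gives the relation $\theta_{r+D, \mu}(z) = -\theta_{r, \mu}(z)$, so this equality holds provided $(a-1)r/D$ is an even integer, which is guaranteed by the uniform condition $2D \mid a - 1$.

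Combining the divisibilities that $a - 1$, $b$, $c$, and $d$ must satisfy into a single principal congruence condition, we see that $f_{r, 1/6}$ is invariant under $\Gamma(N)$ for any common multiple $N$ of $6$, $3D^2$, and $2D$; since $D$ is coprime to $6$, the minimal such $N$ is $6D^2$. For $f_{r, 1/2}$ the extra requirement $18 \mid b$ forces $N$ to be a common multiple of $18$, $3D^2$, and $2D$, whose minimal value is $18D^2$. The only non-routine step is the bookkeeping of which automorphy factors in $A_\mu(\gamma)/A_{1/6}(\gamma)$ cancel; once this is done, the different levels for the two values of $\mu$ emerge from precisely the one $3$-adic factor that fails to cancel when $\mu = 1/2$.
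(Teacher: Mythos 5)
Your proof is correct and follows exactly the route the paper intends: the paper's own justification is the one-line remark that the lemma ``follows immediately'' from Lemma \ref{transformation_r} applied to $\theta_{r,\mu}$ and $\theta_0$, and you have simply supplied the bookkeeping of the automorphy factors, correctly isolating the $3$-adic factor $e^{2\pi i(t_\mu-1)\Frac_3(ba/18)}$ as the source of the level discrepancy and handling $\theta_{ar,\mu}=\theta_{r,\mu}$ via $\theta_{r+D,\mu}=-\theta_{r,\mu}$. One tiny imprecision: the condition $(ba/2)/9\in\ZZ_3$ is equivalent to $9\mid b$ (a purely $3$-adic statement), not to $18\mid b$, but since $18D^2\mid b$ in $\Gamma(18D^2)$ this does not affect the conclusion.
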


\begin{rmk} Also from Lemma \ref{transformation_r} it is easy to see that $f_{r, \mu}(z+9D^2)=f_{r, \mu}(z)$.
\end{rmk}

We can also compute the transformation under $w=\left(\begin{smallmatrix} 0 & 1 \\ -1 & 0 \end{smallmatrix}\right)$ of $\theta_{r, \mu}$. This is also done by straightforward computation. We get:
	
\begin{Lem}\label{Fourier}  We have the transformation:
\[
\ds \theta_{r, 1/6}(3z)=(-1)^r\frac{\omega e^{\pi i \frac{D-1}{6}}}{\sqrt{-3}\sqrt{-iz}}\left(\theta^{(-3r), 1/6}(3(-1/z))-\omega \theta^{(3r), 1/6}(3(-1/z))-\omega^2\theta^{(-3r), 1/2}(3(-1/z))\right)
\]

\noindent where $\ds \theta^{(r), \mu}(z)=\sum\limits_{n\in \ZZ} e^{\pi i (n-\mu)^2 z} (-1)^n e^{2\pi i nr/D}$.

\end{Lem}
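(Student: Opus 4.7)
The plan is to prove the transformation by direct application of Poisson summation (equivalently, by computing the $w$-action of the Weil representation on the appropriate adelic Schwartz--Bruhat function, as in the proof of Lemma \ref{transformation_r}). From the series definition, $\theta_{r,1/6}(3z) = \sum_{n\in\ZZ} f(n)$ with $f(x) = e^{\pi i x}\,e^{3\pi i z\,(x + r/D - 1/6)^2}$, so that $(-1)^n = e^{\pi i n}$ is absorbed into the character of $f$. Completing the square and using the standard Gaussian Fourier integral gives
\[
\hat f(k) \;=\; \frac{1}{\sqrt{-3iz}}\,e^{-\pi i(1-2k)(r/D - 1/6)}\,e^{-\pi i(1-2k)^{2}/(12z)}.
\]
Setting $m = 1 - 2k$, so that $m$ ranges over odd integers, Poisson summation yields
\[
\theta_{r,1/6}(3z) \;=\; \frac{1}{\sqrt{-3iz}}\sum_{m\text{ odd}} e^{-\pi i m r/D + \pi i m/6}\,e^{-\pi i m^{2}/(12z)}.
\]

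The crucial observation is the identity $m^{2}/12 = 3(m/6)^{2}$: splitting the odd $m$ according to residue modulo $6$ rewrites $e^{-\pi i m^{2}/(12z)}$ as $e^{-3\pi i(q-\mu)^{2}/z}$ with $q\in\ZZ$ and $\mu\in\{\pm 1/6,\pm 1/2\}$ determined by the residue class. Using the elementary identity $\theta^{(s),-\mu} = \theta^{(-s),\mu}$ (obtained by $n\mapsto -n$ in the defining sum) to normalize $\mu$ into $\{1/6,1/2\}$, the three classes $m\equiv 1,3,5\pmod 6$ yield exactly the three theta components $\theta^{(\pm 3r),1/6}(-3/z)$ and $\theta^{(-3r),1/2}(-3/z)$ appearing on the right-hand side, with the reindexing $q\to q\pm 1$ (needed to convert the characteristic $\mu = \pm 5/6$ to $\mp 1/6$) introducing the overall sign $-1$ on two of the three components. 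The phases $e^{\pi i m/6}\in\{e^{i\pi/6},\,i,\,e^{5i\pi/6}\}$ from each residue class, combined with those reindexing signs, consolidate into the cube-root pattern $1,-\omega,-\omega^{2}$ of the stated formula.

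The main bookkeeping obstacle is to consolidate the residual scalar prefactors. The leftover factors $e^{-\pi i m r/D}$ accumulated over the three residue classes, together with the shifts $q\mapsto q\pm 1$, collapse under the standing hypothesis $D\equiv 1\pmod 6$ (in force throughout the paper, where $D$ is a product of primes $\equiv 1\pmod 3$) into the global scalar $(-1)^{r} e^{\pi i(D-1)/6}$. One must also reconcile the branch difference $\sqrt{-3iz} = \pm i\,\sqrt{-3}\sqrt{-iz}$ on the upper half-plane, which accounts for the $\omega$ in the prefactor and converts the Poisson-summation denominator $\sqrt{-3iz}$ into the $\sqrt{-3}\sqrt{-iz}$ displayed in the lemma. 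An essentially equivalent but more structural route is to mimic the proof of Lemma \ref{transformation_r}: compute the $w$-action of the Weil representation on $\phi^{r,1/6}$ adelically, so that the three-fold split emerges from expanding the $\QQ_{3}$-Fourier transform of the local component $\phi_{3}$ as a sum of characteristic functions of residue classes of $\ZZ_{3}$ modulo $3$, with the phase contributions assembled from the local factors at $2$, $3$, and $\infty$.
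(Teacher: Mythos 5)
Your route --- classical Poisson summation applied to $f(x)=e^{\pi i x}e^{3\pi i z(x+r/D-1/6)^2}$, followed by splitting the dual sum over odd $m$ into residue classes modulo $6$ --- is the same computation as the paper's, which packages it as the $w$-action of the Weil representation on $\phi^{r,1/6}$ and then splits the resulting sum by $n\bmod 3$; your Fourier transform, the identification $m^2/12=3(m/6)^2$, the three-fold splitting, and the normalization identity $\theta^{(s),-\mu}=\theta^{(-s),\mu}$ are all correct and match the paper's structure step for step.

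The one genuine gap is the final sentence of your second paragraph, where you assert that the residual factors $e^{-\pi i m r/D}$ ``collapse into the global scalar $(-1)^{r}e^{\pi i(D-1)/6}$.'' They do not collapse into a single scalar: writing $m=6q+j$ with $j\in\{1,-1,3\}$, the three classes contribute
\[
e^{\pi i j/6}\,e^{-\pi i j r/D}\;\theta^{(-3r),\,-j/6}\bigl(-3/z\bigr),
\]
and the factors $e^{-\pi i j r/D}$ are \emph{different} $2D$-th roots of unity on the three components, so they cannot be pulled outside the bracket as a common prefactor $(-1)^r$. To reconcile them with the constant coefficients $1,-\omega,-\omega^2$ of the statement one must push each such phase back into the corresponding theta component via the shift identities $\theta^{(s),\mu+c}=(-1)^{c}e^{2\pi i c s/D}\theta^{(s),\mu}$ with $c=\pm(D\mp1)/6$ (this is where $D\equiv 1\pmod 6$ and the factors $(-1)^r$, $e^{\pi i(D-1)/6}$ actually enter), and this bookkeeping is the entire content of the lemma beyond the routine Gaussian integral. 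As written, your argument stops exactly where the work begins; note also that the constants are delicate enough that they deserve an independent check (for instance, at $r=0$ the displayed right-hand side evaluates to $(-1)^{(D-1)/6}\eta(z)$ rather than $\theta_{0,1/6}(3z)=\eta(z)$, and the placement of $3r$ versus $-3r$ differs between the lemma and equation~(\ref{FT}) in the body of the paper), so a proof that merely asserts the consolidation cannot be accepted.
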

{\it Proof.} Denote $\phi=\phi^{r, 1/6}$. Then for $g_{Z/2}=\left(\begin{smallmatrix} (Y/2)^{1/2} & (Y/2)^{-1/2}(X/2)\\ 0 & (Y/2)^{-1/2} \end{smallmatrix}\right) $ and $w=\left(\begin{smallmatrix} 0 &1 \\ -1 & 0 \end{smallmatrix}\right)$ we have $\theta(g_{Z/2}, \phi)=\theta(wg_{Z/2}, w\phi)$. On the LHS we have $\ds \theta(g_{Z/2}, \phi)=i(-1)^r(Y/2)^{1/4} \theta_r(Z).$ We compute $\theta(wg_{Z/2}, w\phi)$ below:

\begin{itemize}
	\item At $\infty$, we have $\ds r(w g_{Z/2})\phi_{\infty}(x)=\gamma_{\infty}(Y/2)^{1/2} \int\limits_{\RR} e^{2\pi i x'^2Z/2} e^{2\pi i (2xx')} dx'=
	\gamma_{\infty}(Y/2)^{1/2}  e^{4\pi i x^2 (-1/Z)} \frac{1}{\sqrt{-iZ}}.$

	\item At $p|D$, we have $\ds r(w\phi_p)=
	 \gamma_p\int\limits_{\ZZ_p+r/D}e^{-2\pi i \Frac_p(2xy)} dy 
	 =
	 \gamma_p e^{-2\pi i \Frac_p(2xr/D) }\Char_{\ZZ_p}(x)$.

	\item At $p=3$, we have $\ds r(w\phi_3)= \gamma_3\int\limits_{\ZZ_3+1/3}e^{-2\pi i \Frac_3(2xy)} dy 
	  =
	  \gamma_3e^{-2\pi i \Frac_3(2x/3)}\Char_{\ZZ_3}(x)$.

	\item At $p=2$, we have $\ds r(w\phi_2)=
	\gamma_2\int\limits_{\ZZ_2-1/2}e^{\pi i \Frac_2(y)} e^{-2\pi i \Frac_2(2xy)} dy=
	\gamma_2e^{-\pi i/2} e^{2\pi i \Frac_2(x)}\Char_{\frac{1}{2}(\ZZ_2+1/2)}(x).
	$ 

\end{itemize}

Writing all these together, we get:
\[
\theta(wg_{Z/2}, w\phi)=
\omega \frac{(Y/2)^{1/4}}{\sqrt{-iZ}}\sum_{n\in \ZZ}  e^{\pi i (n+1/2)^2 (-1/Z)} (-1)^n e^{-2\pi i \Frac_3(n/3)}e^{2\pi i \Frac_D((n+1/2)r/D) }.
\]
Changing $n \ra -n-\frac{D-1}{2}$ we get:

\[
e^{2\pi i \frac{D-1}{3}}(-1)^{\frac{D-1}{2}}\omega\frac{(Y/2)^{1/4}}{\sqrt{-iZ}}\sum_{n\in \ZZ}  e^{\pi i (n-D/2)^2 (-1/Z)} (-1)^n e^{2\pi i \Frac_3(n/3)}e^{-2\pi i \Frac_D(nr/D) }.
\]

We take the separate sums depending on $n \mod 3$.
\begin{itemize}
	\item $\ds \sum_{n\in \ZZ}  e^{\pi i (3n-D/2)^2 (-1/Z)} (-1)^n e^{-2\pi i \Frac_p(3nr/D)}=\theta^{(-3r), 1/6}(9(-1/Z))$

	\item $\ds -\omega\sum_{n\in \ZZ}  e^{\pi i (3n+D/2)^2 (-1/Z)} (-1)^n e^{2\pi i \Frac_D((3nr/D) }
	=
	-\omega\theta^{(3r), 1/6}(9(-1/Z))$.

	\item $-\omega^2\ds \sum_{n\in \ZZ}  e^{\pi i (3n-3D/2)^2 (-1/Z)} (-1)^n e^{-2\pi i \Frac_D(3nr/D) }
	=
	-\omega^2\theta^{(-3r), 1/2}(9(-1/Z)).$
	
	Thus we got:	
	\[
e^{2\pi i \frac{D-1}{3}}(-1)^{\frac{D-1}{2}}\omega\frac{(Y/2)^{1/4}}{\sqrt{-iZ}}(\theta^{(-3r), 1/6}(9(-1/Z))-\omega \theta^{(3r), 1/6}(9(-1/Z))-\omega^2\theta^{(-3r), 1/2}(9(-1/Z))).
	\]
	Taking $Z=3z$ we get the result of the lemma.

\end{itemize}

\begin{small}

\end{small}

\end{document}